\DeclareMathOperator{\Aut}{Aut}
\newcommand{\bdy}{\ensuremath{\partial}}
\newcommand{\sph}[1]{\ensuremath{\mathbb{S}^{#1}}}
\newcommand{\iso}{\ensuremath{\cong}}
\newcommand{\Z}[1][]{\ensuremath{\mathbb{Z}_{#1}}}
\newcommand{\R}{\ensuremath{\mathbb{R}}}
\newcommand{\E}{\ensuremath{\mathbb{E}}}
\newcommand{\F}{\ensuremath{\mathbb{F}}}
\newcommand{\Nil}{\ensuremath{\mathrm{Nil}}}
\newcommand{\proP}[2][p]{\ensuremath{\widehat{#2}_{(#1)}}}
\newcommand{\nsgp}[1][]{\ensuremath{\triangleleft_{#1}}}
\newcommand{\sbgp}[1][]{\ensuremath{\leq_{#1}}}
\newcommand{\ofg}[1][]{\ensuremath{\pi_1^{\text{orb}}#1}}
\newcommand{\gp}[1]{\ensuremath{{\langle} #1{\rangle}}}
\newtheorem{theorem}{Theorem}[section]
\newtheorem{prop}[theorem]{Proposition}
\newtheorem{lem}[theorem]{Lemma}
\newtheorem{clly}[theorem]{Corollary}
\theoremstyle{definition}
\newtheorem{defn}[theorem]{Definition}
\newtheorem*{cnv}{Conventions}
\theoremstyle{remark}
\newtheorem*{rmk}{Remark}
\newtheorem{example}[theorem]{Example}
\theoremstyle{plain}
\newcounter{introthmcount}
\newenvironment{introthm}[1]{\\[1.9ex] {\bf Theorem\refstepcounter{introthmcount} \label{#1}\Alph{introthmcount}.} \em}{\em \\[2ex]}
\newcommand{\SFS}{Seifert fibre space}
\title{Virtual pro-$p$ properties of 3-manifold groups}
\author{Gareth Wilkes}
\begin{document}
\maketitle
\begin{abstract}
We answer a question of Aschenbrenner and Friedl regarding virtual $p$-efficiency for 3-manifold groups. We then study conjugacy $p$-separability and prove results for Fuchsian groups, Seifert fibre spaces and graph manifolds.
\end{abstract}
\section{Introduction}
Fundamental groups of 3-manifolds are known to have strong residual properties, and well-behaved profinite completions. For instance, the profinite completion of a 3-manifold group determines the geometry of the manifold \cite{WZ14}; and when the manifold is Seifert fibred it determines the isomorphism type of the group \cite{Wilk15}, up to a certain ambiguity found by Hempel \cite{hempel14}. Furthermore orientable 3-manifold groups are conjugacy separable \cite{HWZ12}.

By contrast, the pro-$p$ completion may be very poorly behaved; for example the fundamental group of any knot complement has pro-$p$ completion $\Z[p]$ for any prime $p$. However the pro-$p$ topology is often `virtually' well-behaved, in the sense that a 3-manifold group will have a finite-index subgroup with well-behaved pro-$p$ topology. Aschenbrenner and Friedl \cite{AF13} proved that, for all but finitely many primes $p$, any 3-manifold group is virtually residually $p$. The proviso `all but finitely many primes', arising from the hyperbolic pieces, may be removed in light of the fact that all hyperbolic 3-manifolds are virtually special (by work of Agol, Kahn-Markovic, Wise and others; see \cite{AFW15} for complete referencing) and hence linear over \Z. Koberda \cite{kob13} independently proved that fibred 3-manifolds are virtually residually $p$ for all primes $p$. This property therefore holds for all 3-manifolds except graph manifolds as these are virtually fibred \cite{AGM13, PW12}. Graph manifolds were already known to be virtually residually $p$ for all $p$ by \cite{AF13}.

Aschenbrenner and Friedl \cite{AF13}, as part of their program for obtaining the above result, proved that any graph manifold has a finite-sheeted cover whose JSJ decomposition is `$p$-efficient', meaning that it gives a well-behaved splitting of the pro-$p$ completion. They then asked two questions; firstly whether this property also holds for non-graph manifolds. We exploit virtual fibring and extend the techniques from \cite{kob13} to prove that this does indeed hold: 
\begin{introthm}{introVpE}
Let $M$ be a compact virtually fibred 3-manifold with fibre $\Sigma$ and monodromy $\phi$, where $\Sigma$ is a surface of negative Euler characteristic. Let $p$ be a prime. Then $M$ has a finite-sheeted cover with $p$-efficient JSJ decomposition.
\end{introthm}
Efficiency of the JSJ decomposition plays an important role in establishing conjugacy separability of 3-manifold groups (see \cite{WZ10}, \cite{HWZ12}). The second question asked by Aschenbrenner and Friedl was whether virtual $p$-efficiency has the same application. We apply the techniques from \cite{WZ10} to show that, indeed, graph manifold groups are virtually conjugacy $p$-separable.
\begin{introthm}{introCpS}
Let $M$ be a compact graph manifold. Then $\pi_1 M$ has a finite-index subgroup which is conjugacy $p$-separable.
\end{introthm}
It is not yet known whether hyperbolic 3-manifolds are virtually conjugacy $p$-separable, so we cannot yet extend this to all 3-manifolds. In the course of proving Theorem \ref{introCpS} we prove conjugacy $p$-separability for Fuchsian groups and most Seifert fibre space groups.
\begin{introthm}{introFuchsSFS}
Let $G$ be the fundamental group of a 2-orbifold or of a \SFS{} that is not of geometry \Nil. Then $G$ is conjugacy $p$-separable precisely when $G$ is residually $p$.
\end{introthm}
Conjugacy $p$-separability of surface groups was proved by Paris \cite{Par09}; we give a new proof of this fact.
\begin{cnv}
In this paper, we will use the following conventions.
\begin{itemize}
\item Abstract groups will be assumed finitely presented and will be denoted with Roman letters $G, H, ...$; they will be assumed to have the discrete topology.
\item Profinite groups will be assumed topologically finitely generated and will be denoted with capital Greek letters $\Gamma, \Delta, ...$.
\item The symbols \nsgp[\rm f], \nsgp[p] will denote `normal subgroup of finite index', `normal subgroup of index a power of $p$' respectively; similar symbols will be used for not necessarily normal subgroups.
\item There is a divergence in notation between profinite group theorists, who use $\Z[p]$ to denote the $p$-adic integers, and manifold theorists for whom $\Z[p]$ is usually the cyclic group of order $p$. To avoid any doubt, we follow the former convention and the cyclic group of order $p$ will be consistently denoted $\Z/p$ or $\Z/p\Z$.
\item All manifolds which appear are assumed to be compact and orientable.
\item For us, a {\em graph manifold} will mean a 3-manifold which has a non-trivial JSJ decomposition, all of whose pieces are Seifert fibred. We also insist that the manifold not be a single Seifert fibre space or a Sol manifold. Note that some authors do include these spaces under the name `graph manifold'.
\end{itemize}
\end{cnv}
The author would like to thank Marc Lackenby for carefully reading this paper, and Federico Vigolo for drawing the illustrations. The author was supported by the EPSRC and a Lamb and Flag Scholarship from St John's College, Oxford.
\section{Preliminaries}
Let $G$ be a group. The {\em pro-$p$ topology} on $G$ is the topology whose neighbourhood basis at the identity consists of normal subgroups $N$ of $G$ with $[G:N]$ a power of $p$. Since an intersection of normal subgroups of index a power of $p$ again has index a power of $p$, each normal subgroup of index a power of $p$ contains a characteristic subgroup of index a power of $p$ (that is, a subgroup invariant under all automorphisms of $G$). Thus the characteristic subgroups with index a power of $p$ also form a neighbourhood basis at the identity; we will freely move between these two definitions of the pro-$p$ topology.

A subset $S$ of $G$ is {\em $p$-separable} in $G$ if $S$ is closed in the pro-$p$ topology; equivalently, if for every $g\in G\smallsetminus S$ there is $N\nsgp[p] G$ such that under the quotient map $\phi:G\to G/N$, the image of $S$ does not contain the image of $g$. 

For a subset $S$ of $G$, suppose that for every $g\in G$ not conjugate to any element of $S$, there exists a finite $p$-group $P$ and a surjection $\phi:G\to P$ such that $\phi(g)$ is not conjugate to any element of $\phi(S)$; equivalently suppose that the union of the conjugacy classes of elements in $S$ is $p$-separable. Then we say $S$ is {\em conjugacy $p$-distinguished} in $G$. If $g\in G$, we say $g$ is conjugacy $p$-distinguished in $G$ if $\{g\}$ is conjugacy $p$-distinguished. If all elements of $G$ are conjugacy $p$-distinguished, then $G$ is called {\em conjugacy $p$-separable}.

For $H$ a subgroup of $G$, we say that $G$ {\em induces the full pro-$p$ topology on $H$}, or that {\em $H$ is topologically $p$-embedded in $G$}, if the induced topology on $H$ agrees with its pro-$p$ topology. That is, we require that for any $N\nsgp[p] H$ there is $N'\nsgp[p] G$ such that $N'\cap H\leq N$. Note that if $H$ is a normal subgroup of $G$ with index a power of $p$, then $G$ induces the full pro-$p$ topology on $H$, because any characteristic normal subgroup of $H$ is a normal subgroup of $G$.

We will be needing the language of pro-$p$ groups acting on pro-$p$ trees. A detailed knowledge will not be necessary in this paper; for the present purpose we need only concern ourselves with  some definitions made by analogy with abstract Bass-Serre theory. Let ${\cal G}=(X,G_\bullet)$ be a graph of discrete groups with base graph $X$ and vertex and edge groups $G_v, G_e$ respectively; let $G$ be the fundamental group of this graph of groups, denoted $\pi_1({\cal G})$ or $\pi_1(X,G_\bullet)$. There is a standard tree $T=S(\cal G)$ on which $G$ acts, constructed as follows: the vertex (respectively, edge) set of $T$ consists of cosets of the vertex (respectively, edge) groups $G_x$ in $G$; that is, 
\[V(T) = \coprod_{x\in V(X)} G/G_x,\quad E(T) = \coprod_{e\in E(X)} G/G_e \]
with the obvious incidence maps given by inclusions $gG_e\subseteq gG_x$ when $x$ is an endpoint of $e$. Vertex stabilisers for the action of $G$ on $T$ are conjugates of the $G_x$, and the quotient graph $G\backslash T$ is $X$.

Similarly, given a graph of pro-$p$ groups $\proP{\cal G}=(X,\Gamma_\bullet)$ with fundamental group $\Gamma=\Pi_1(\proP{\cal G})=\Pi_1(X,\Gamma_\bullet)$ (defined by the same universal property as in the abstract case, in the category of pro-$p$ groups), there is a standard tree $S(\proP{\cal G})$ with precisely the same formal definition as above. Again the quotient graph is $X$ and vertex stabilisers have the expected forms. 

Given a graph of discrete groups $(X,G_\bullet)$ one may form a graph of pro-$p$ groups $\proP{\cal G}=(X,\proP{G_\bullet})$ by taking the pro-$p$ completion of each group; one may ask what relation $\Gamma=\Pi_1(\proP{\cal G})$ bears to $G=\pi_1({\cal G})$ and what relation the standard trees bear to one another. In general this relationship may be complicated. However there is a set of conditions which ensure that the behaviour is well-controlled.

\begin{defn}
A graph of discrete groups ${\cal G}=(X,G_\bullet)$ is {\em $p$-efficient} if $G=\pi_1(\cal G)$ is residually $p$, each group $G_x$ is closed in the pro-$p$ topology on $G$, and $G$ induces the full pro-$p$ topology on each $G_x$.
\end{defn}
In the case when our graph of groups is $p$-efficient, then $\Gamma=\proP{G}$ and the abstract standard tree $S(\cal G)$ is canonically embedded in $S(\proP{\cal G})$. 

Note that when $G$ is a free product, i.e.\ all edge groups of $\cal G$ are trivial, $G$ certainly induces the full pro-$p$ topology on its factors, and by the argument in Proposition \ref{bdycptssep} these are $p$-separable; so a free product decomposition of a residually $p$ group is always $p$-efficient.
 
The following property plays a role in conjugacy separability results.
\begin{defn}
An action of a (profinite) group on a (profinite) tree $T$ is {\em $k$-acylindrical} if the stabiliser of any path in $T$ of length greater than $k$ is trivial.
\end{defn}
For instance, `0-acylindrical' refers to an action with trivial edge stabilisers, and `1-acylindrical' says that edge stabilisers are malnormal in vertex groups.

This brief sketch is enough to make the paper readable; for a more detailed discussion, see Chapter 9 of \cite{RZ00}, \cite{RZup} or \cite{RZ00p}.

For background about Fuchsian groups, orbifolds, and Seifert fibre spaces the reader is referred to \cite{scott83}, also to Thurston's notes on orbifolds (\cite{thurstonnotes}, Chapter 13). We recall the criteria for a \SFS{} group or Fuchsian group to be residually $p$; for a proof see \cite{Wilk16}, Section 9.
\begin{lem}
Let $O$ be an orientable orbifold with non-positive Euler characteristic and such that each cone point of $O$ has order a power of $p$. Then $O$ has a regular cover of degree a power of $p$ which is a surface. Hence $\ofg{O}$ is residually $p$.
\end{lem}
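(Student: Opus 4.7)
The strategy is to build a surjection $\phi:\ofg{O}\to P$ onto a finite $p$-group $P$ such that each cone-point generator $x_i$ of order $n_i$ maps to an element of order exactly $n_i$. The kernel $K=\ker\phi$ then has $p$-power index, and the corresponding regular cover $\tilde{O}\to O$ has trivial holonomy at each cone-point preimage, so $\tilde{O}$ is a genuine surface. Residual $p$-ness will then follow from the classical fact that surface groups are residually $p$ for every prime.

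I would construct $\phi$ from the standard presentation of $\ofg{O}$, with generators $a_j,b_j$ for the underlying genus $g$, cone-point generators $x_1,\ldots,x_k$ subject to $x_i^{n_i}=1$, and boundary-loop generators when $O$ has boundary, tied together by a single long relation. If $O$ has boundary, the long relation expresses one boundary generator in terms of the rest, so $\ofg{O}$ is a free product of a free group and the cyclic groups $\gp{x_i}\iso\Z/n_i$, and the obvious projection to $\prod_i\Z/n_i$ suffices. If $O$ is closed of genus $g\geq 1$, one exploits the freedom in the genus generators---possibly using a non-abelian $p$-group target to realise the required commutators---to satisfy the single long relation while projecting each $x_i$ onto an element of full order.

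The main obstacle is the closed genus-$0$ case with $k\geq 3$ cone points, where the relation $x_1\cdots x_k=1$ directly constrains $P$: one must find $y_1,\ldots,y_k$ of orders $n_1,\ldots,n_k$ in some finite $p$-group whose product is trivial. The Euclidean sub-cases $(2,2,2,2)$ with $p=2$ and $(3,3,3)$ with $p=3$ are handled by classical torus covers, and the remaining (hyperbolic) sub-cases require an explicit (possibly non-abelian) $p$-group construction; the hypothesis $\oec[(O)]\leq 0$ provides enough slack---forcing $k\geq 3$ with compatible $n_i$---for such a configuration to exist.

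Given $\phi$, residual $p$-ness follows easily. For $1\neq g\in\ofg{O}$, either $\phi(g)\neq 1$, or $g$ lies in $K=\pi_1(\tilde{O})$, whereupon one chooses a characteristic subgroup $N\nsgp[p] K$ avoiding $g$. Since $N$ is characteristic in $K\nsgp[p]\ofg{O}$, it is normal in $\ofg{O}$ of $p$-power index, and $\ofg{O}/N$ is the required finite $p$-group quotient separating $g$ from the identity.
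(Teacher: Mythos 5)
Your overall strategy is the standard one (and is the one followed by the reference the paper actually cites for this lemma, \cite{Wilk16}, Section~9 --- the paper itself gives no proof): produce a finite $p$-group quotient $\phi$ injective on each cone-point subgroup $\gp{x_i}$, observe that $\ker\phi$ is torsion-free so the corresponding regular $p$-power cover is a surface, and then promote residual $p$-ness of surface groups to $\ofg[O]$ via characteristic subgroups. The closing paragraph and the bounded case are fine. The problem is that the entire mathematical content of the lemma is concentrated in the step you leave unproved: for a closed genus-$0$ orbifold with cone orders $p^{e_1},\dots,p^{e_k}$ you must exhibit a finite $p$-group containing elements $y_1,\dots,y_k$ of orders \emph{exactly} $p^{e_1},\dots,p^{e_k}$ with $y_1\cdots y_k=1$, and "the hypothesis $\chi^{\mathrm{orb}}(O)\le 0$ provides enough slack for such a configuration to exist" is an assertion, not an argument. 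This is not a routine existence statement: in any \emph{abelian} $p$-group the relation $y_1\cdots y_k=1$ forces the maximal order among the $y_i$ to be attained at least twice, so for admissible hyperbolic tuples such as $(3,3,9)$ or $(2,2,2,4)$ no abelian target works and a genuinely non-abelian construction (wreath products, iterated central extensions, or an induction passing through intermediate degree-$p$ covers that reduce $\sum e_i$) is required. Moreover the Euler characteristic hypothesis must enter this construction in an essential way --- for the spindle $S^2(p^a,p^b)$ with $a<b$ the element $x_2=x_1^{-1}$ already has order $p^a$ in $\ofg[O]$ itself, so no quotient whatsoever realises the full orders --- and your sketch never identifies where or how it is used beyond "forcing $k\ge 3$".

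The positive-genus closed case has a smaller gap of the same kind: you need $(x_1\cdots x_k)^{-1}$ to be a product of exactly $g$ commutators in a $p$-group in which each $x_i$ retains its full order, and "possibly using a non-abelian target to realise the required commutators" should be replaced by an explicit construction (e.g.\ a central extension of $(\Z/p^{E})^2$ by $\prod_i\Z/p^{e_i}$ in the spirit of the mod-$p^r$ Heisenberg group used in Proposition~\ref{curvetopemb}, where a single commutator realises a central element of order $p^{E}$). As written, your proposal correctly reduces the lemma to a concrete finite-$p$-group realisation problem but does not solve that problem, so it does not constitute a proof.
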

\begin{prop}
Let $p$ be a prime. Let $M$ be a \SFS{} which is not of geometry $\sph{3}$ or $\sph{2}\times\R$. Then $\pi_1 M$ is residually $p$ if and only if all exceptional fibres of $M$ have order a power of $p$, and $M$ has orientable base orbifold when $p\neq2$. That is, $M$ has residually $p$ fundamental group precisely when its base orbifold $O$ is $\Z/p$-orientable and has residually $p$ fundamental group.
\end{prop}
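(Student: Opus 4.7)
The plan is to prove the two directions separately. For sufficiency ($\Leftarrow$) I will reduce to $\pi_1$ of an $S^1$-bundle over a closed surface via a $p$-power index cover. For necessity ($\Rightarrow$) I will extract each of the two conditions (orientability, $p$-power cone orders) from residual $p$-ness of $\pi_1 M$ by examining how centrality behaves under finite $p$-quotients.

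For sufficiency, if $p = 2$ and $O$ is non-orientable I first pass to the orientation double cover (of index $p=2$), so that $O$ is orientable and $h$ is central. The previous lemma then produces a regular cover $\widetilde O \to O$ of $p$-power degree with $\widetilde O$ a surface, and lifting through the Seifert projection yields a cover $\widetilde M \to M$ of the same $p$-power index whose total space is an $S^1$-bundle over $\widetilde O$. It suffices to show $\pi_1 \widetilde M$ is residually $p$, as residual $p$-ness then transfers to $\pi_1 M$ through the $p$-power index inclusion. For $\E^3$ or $\Nil$ base, $\pi_1 \widetilde M$ is torsion-free finitely generated nilpotent; for $\HR$ the bundle is trivial and $\pi_1 \widetilde M \cong \pi_1 \widetilde O \times \Z$; and for $\SLR$ one appeals to the residual $p$-ness of non-trivial circle-bundle groups established in \cite{Wilk16}.

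For necessity, if $p$ is odd and $O$ non-orientable there is $x \in \pi_1 M$ with $xhx^{-1} = h^{-1}$, so in any finite $p$-quotient $\phi$ we would have $\phi(h)^2 = 1$, forcing $\phi(h) = 1$ (since $p$ is odd and $\phi(h)$ has $p$-power order); hence $h$ would lie in every $p$-quotient kernel, contradicting residual $p$-ness. Assuming then that $h$ is central (after passing to the orientation cover if $p=2$), suppose some cone point has order $\alpha$ with a prime factor $\ell \neq p$, let $q$ be the corresponding cone element, and put $y = q^{\alpha/\ell}$, so that $y^\ell = q^\alpha = h^{-\beta}$ is central. In any $p$-quotient $\phi \colon \pi_1 M \to P$, $\phi(y)$ has $p$-power order coprime to $\ell$, so raising to the $\ell$-th power is an automorphism of $\langle \phi(y)\rangle$ and $\phi(y) \in \langle \phi(y)^\ell \rangle \leq Z(P)$; i.e.\ $\phi(y)$ is central in every $p$-quotient. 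However, the image of $y$ in $\ofg(O)$ is a non-trivial element of order $\ell$ in a Fuchsian group of non-spherical type, which has trivial centre (given that $M$ is not of geometry $\sph{3}$ or $\sph{2}\times\R$); hence $y$ is not central in $\pi_1 M$, and picking $a \in \pi_1 M$ with $[a,y] \neq 1$ yields a non-trivial element killed by every $p$-quotient, a contradiction.

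The hardest step will be the $\SLR$ case in the sufficiency direction: residual $p$-ness of a non-trivial $S^1$-bundle over a closed hyperbolic orientable surface is not a formal consequence of residual $p$-ness of surface groups, since the Euler class must be detected in finite $p$-quotients, and the argument needs input either from faithful linear representations into $\SLR$ or from careful control of the lower central $p$-series. A secondary technical point is verifying the non-centrality of $y$ for small base orbifolds, which reduces to a direct inspection of Fuchsian centres.
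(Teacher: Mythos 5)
The paper does not actually prove this proposition: it is stated as background, with the proof deferred to \cite{Wilk16}, Section 9, so there is no internal proof to compare against. Your reconstruction is essentially the standard argument and is correct in outline. Three remarks. First, the step you flag as hardest --- residual $p$-ness of $\pi_1$ of an orientable circle bundle with nonzero Euler number $e$ over a closed orientable surface --- admits a short self-contained proof rather than a citation: the group $G=\langle a_1,b_1,\dots,a_g,b_g,h \mid h \text{ central},\ \prod_i[a_i,b_i]=h^e\rangle$ surjects onto the torsion-free class-2 nilpotent group $\langle a,b,h\mid h\text{ central},\ [a,b]=h^e\rangle$ by killing $a_i,b_i$ for $i\geq 2$; this quotient is residually $p$ and $h$ has infinite order in it, so nontrivial powers of $h$ are detected there, while elements outside $\langle h\rangle$ are detected in the residually $p$ surface-group quotient $G/\langle h\rangle$. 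Second, your transfer of residual $p$-ness from $\pi_1\widetilde M$ back up to $\pi_1 M$ is valid only because the covers you construct are regular of $p$-power degree (one passes to the core of a separating subgroup of $\pi_1\widetilde M$ to get a normal $p$-power-index subgroup of $\pi_1 M$); this should be said explicitly, since residual $p$-ness does not pass up through arbitrary finite-index, or even arbitrary $p$-power-index, subgroups. Third, in the necessity direction for $p=2$ with non-orientable base, note that under the orientation double cover a cone point of order $\alpha$ can become one of order $\alpha/2$, but its odd part is preserved, so your argument applied upstairs still rules out any odd prime factor $\ell$; together with the (checkable) triviality of the centre of every Euclidean or hyperbolic 2-orbifold group possessing a cone element, the loose ends you flag are real but harmless.
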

\section{Virtual $p$-efficiency}
\begin{prop}\label{curvetopemb}
Let $l$ be an essential simple closed curve on an orientable compact surface $\Sigma$. Then for any $r$, there is some $p$-group quotient of $G=\pi_1 \Sigma$ in which the image of $l$ is $p^r$-torsion. In particular, $G$ induces the full pro-$p$ topology on $L=\pi_1 l$.
\end{prop}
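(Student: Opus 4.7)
The plan is a two-step argument: treat the non-separating case by elementary homology, and reduce the separating case to it via a $\Z/p$-cover in which the curve becomes non-separating.

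If $l$ is non-separating, then $[l]$ is a primitive element of the free abelian group $H_1(\Sigma;\Z)$, so the composition $G\twoheadrightarrow H_1(\Sigma;\Z)\twoheadrightarrow\Z\twoheadrightarrow\Z/p^r$ (where the middle map is a split projection sending $[l]$ to $1$) gives a $p$-group quotient in which the image of $l$ has order exactly $p^r$.

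Suppose now that $l$ separates $\Sigma$ as $\Sigma_1\cup_l\Sigma_2$. Since $l$ is essential and not boundary-parallel, each $\Sigma_i$ has $\chi(\Sigma_i)<0$, and $\pi_1\Sigma_i$ is a free group of rank at least $2$. I can therefore choose surjections $\phi_i:\pi_1\Sigma_i\twoheadrightarrow\Z/p$ killing $l$, and these amalgamate to $\phi:G\twoheadrightarrow\Z/p$ with $\phi(l)=0$. In the corresponding $\Z/p$-cover $\tilde\Sigma$, the curve $l$ has $p$ disjoint lifts $\tilde l_1,\ldots,\tilde l_p$, and each preimage $\tilde\Sigma_i$ is connected (as $\phi|_{\pi_1\Sigma_i}$ is surjective). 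The dual graph of $\tilde\Sigma$ along these lifts is two vertices joined by $p$ parallel edges; removing any one edge leaves it connected, so every lift $\tilde l_j$ is non-separating in $\tilde\Sigma$.

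Applying the non-separating case to $\tilde\Sigma$ and $\tilde l:=\tilde l_1$ (which, with a compatible basepoint, is the element $l$ inside $\pi_1\tilde\Sigma\leq G$) produces a surjection $\psi:\pi_1\tilde\Sigma\twoheadrightarrow\Z/p^r$ sending $l$ to a generator. To descend to $G$, set
\[ N := \bigcap_{g\in G}\, g\,(\ker\psi)\,g^{-1}. \]
Because $\ker\psi\triangleleft\pi_1\tilde\Sigma$ and $\pi_1\tilde\Sigma\triangleleft G$, the distinct conjugates of $\ker\psi$ are indexed by $G/\pi_1\tilde\Sigma$, so $N$ is an intersection of at most $p$ subgroups of $\pi_1\tilde\Sigma$ of index $p^r$; hence $N\triangleleft G$ has $p$-power index. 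Since $\Z/p^r$ is annihilated by $p^r$, for any $g\in G$ we have $(g^{-1}lg)^{p^r}\in\ker\psi$, i.e.\ $l^{p^r}\in g(\ker\psi)g^{-1}$, giving $l^{p^r}\in N$; conversely $\psi(l^{p^{r-1}})\neq 0$ in $\Z/p^r$, so $l^{p^{r-1}}\notin\ker\psi$ and thus $l^{p^{r-1}}\notin N$. Therefore $l$ has order exactly $p^r$ in the $p$-group $G/N$, and the ``in particular'' clause on the induced pro-$p$ topology on $L$ is immediate. I expect the main obstacle is the preceding dual-graph step which forces $l$ to become non-separating in the $\Z/p$-cover: each $\pi_1\Sigma_i$ must admit a $\Z/p$-quotient killing $l$, and this is precisely where the essential/non-boundary-parallel hypothesis on $l$ is used, to rule out the degenerate case of some $\Sigma_i$ being an annulus with $\pi_1$ of rank one.
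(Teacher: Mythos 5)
Your argument is correct, but for a separating curve it takes a genuinely different route from the paper. The non-separating case is identical in both proofs: $[l]$ is primitive in $H_1(\Sigma;\Z)$, so a surjection onto $\Z/p^r$ suffices. For the separating case the paper stays in $G$ itself and writes down an explicit finite quotient: on a side $\Sigma_i$ of the splitting whose only boundary component is $l$, the curve $l$ is a product of commutators in the free group $\pi_1\Sigma_i$, which is mapped to the mod-$p^r$ Heisenberg group ${\cal H}_3(\Z/p^r)$ so that the image of $l$ is the central $p^r$-torsion element; on a side with further boundary, $l$ is primitive in that side's homology; the two quotients are then amalgamated over $\Z/p^r$. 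You instead pass to a regular $\Z/p$-cover in which $l$ becomes non-separating --- your dual-graph argument for this is right, and the surjections $\phi_i$ exist because each $\pi_1\Sigma_i$ is free of rank at least $2$ once disc and annulus pieces are excluded by essentiality --- then apply the homological case upstairs and descend by intersecting the at most $p$ conjugates of $\ker\psi$; the verification that $N\nsgp[p]G$ and that $N\cap L=p^rL$ exactly is correct, and the ``in particular'' clause does follow at once. Your route is more uniform (no case analysis on how the boundary components of $\Sigma$ are distributed, and no explicit matrices), while the paper's construction is completely explicit and, as written, also directly handles the situation in which $l$ is itself a boundary curve of the surface in question --- which is how the proposition is actually invoked in the proof of Proposition \ref{surfacesplittingamalg}; your covering-space reduction would need the easy additional remark that a lone boundary curve likewise becomes homologically primitive after passing to a suitable $\Z/p$-cover.
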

\begin{proof}
We will find, for each integer $r$, a finite $p$-group quotient of $G$ such that the image of $l$ is $p^r$-torsion. If $l$ is non-separating, then it represents a primitive class in $H_1(\Sigma;\Z)$, so a suitable map $G\twoheadrightarrow \Z/p^r$ induces the map $L\twoheadrightarrow\Z/p^r$. 
If $l$ is separating, let $G_1$, $G_2$ be the fundamental groups of the two components $\Sigma_1, \Sigma_2$ of $\Sigma\smallsetminus l$. If $\Sigma_i$ has $l$ as its only boundary component, then $G_i$ is a free group with generators $a_i,b_i \,(1\leq i\leq g)$ in which $l$ is the product of the commutators $[a_i,b_i]$; now define a map from $G_i$ to the `mod-$p^r$ Heisenberg group' 
\[ {\cal H}_3(\Z/p^r) = \left\{\begin{pmatrix} 1 & x & z \\ 0&1&y\\0&0&1 \end{pmatrix} :x,y,z\in\Z/p^r\right\}\]
by mapping \[a_1\mapsto \begin{pmatrix}1 & 1&0 \\ 0 &1&0\\0&0&1\end{pmatrix}, \quad b_1\mapsto \begin{pmatrix}1 & 0&0 \\ 0 &1&1\\0&0&1\end{pmatrix}\] and mapping the other generators to the identity matrix, so that the image of $l$ is the $p^r$-torsion element 
\[\begin{pmatrix}1 & 0&1 \\ 0 &1&0\\0&0&1\end{pmatrix}\]

If $\Sigma_i$ has another boundary component, then $l$ is again a primitive element in the  homology of $\Sigma_i$, so a suitable map to $\Z/p^r$ induces a surjection $L\twoheadrightarrow \Z/p^r$. We may now exhibit the required quotients of $G$, according to the division of cases above: when both $\Sigma_i$ have another boundary component, map $G\to \Z/p^r$ for in this case $l$ is primitive in the homology of $\Sigma$. When $\Sigma_1$ has no boundary component other than $l$, map
\[ G=G_1\ast_L G_2\to {\cal H}_3(\Z/p^r)\ast_{\Z/p^r} \Z/p^r  = {\cal H}_3(\Z/p^r)\]
and when both $\Sigma_i$ have this property, map
\[ G=G_1\ast_L G_2\to {\cal H}_3(\Z/p^r)\ast_{\Z/p^r} {\cal H}_3(\Z/p^r)  \to {\cal H}_3(\Z/p^r)\]
where the final homomorphism identifies the two copies of the Heisenberg group.
\end{proof}
It will follow from the next sequence of propositions that $L$ is also $p$-separable in $G$; for it will be $p$-separable in each $G_i$ by the next proposition and the splitting along $l$ will be $p$-efficient by Propositions \ref{surfacesplittingamalg} and \ref{surfacesplittingHNN}.
\begin{prop}\label{bdycptssep}
Let $\Sigma$ be a compact orientable surface with non-empty boundary that is not a disc. Let $l$ be a boundary component. Then $L=\pi_1 l$ is $p$-separable in $G=\pi_1 \Sigma$.
\end{prop}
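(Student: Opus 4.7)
The plan is as follows. First I would observe that since $\Sigma$ is a compact orientable surface with non-empty boundary that is not a disc, it is homotopy equivalent to a finite graph, so $G = \pi_1\Sigma$ is a finitely generated free group, and in particular is residually $p$.

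The key step will be to identify $L$ with the centralizer of $l$ in $G$. For this I would use two classical facts. Firstly, an essential simple closed curve $l$ on an orientable surface represents a primitive element of the fundamental group (not a proper power); a standard way to see this is that if $l = w^n$ with $n \geq 2$, then in the cover $\Sigma_w$ corresponding to $\langle w \rangle$ a component of the preimage of $l$ is homeomorphic to $l$ yet represents the element $w^n$ in $\pi_1\Sigma_w = \langle w \rangle$, which is incompatible with being an embedded simple closed curve in a surface homotopy equivalent to $S^1$. Secondly, every abelian subgroup of a free group is cyclic, so $C_G(l)$ is cyclic; combined with primitivity of $l$ this forces $C_G(l) = \langle l \rangle = L$.

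With this in hand the separation argument is immediate. Given any $g \in G \setminus L = G \setminus C_G(l)$, the commutator $[g, l]$ is non-trivial in $G$. Residual $p$-ness of $G$ then supplies a surjection $\phi\colon G \to P$ onto a finite $p$-group with $[\phi(g), \phi(l)] = \phi([g, l]) \neq 1$, so $\phi(g)$ does not commute with $\phi(l)$. Since every element of $\phi(L) = \langle \phi(l) \rangle$ commutes with $\phi(l)$, we conclude $\phi(g) \notin \phi(L)$, which is the required $p$-group quotient. This proves $L$ is $p$-separable in $G$.

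The only non-formal inputs are primitivity of a simple boundary curve and cyclicity of abelian subgroups of free groups --- both classical --- so I anticipate no real obstacle. In particular the Heisenberg-type construction of Proposition~\ref{curvetopemb} plays no role here; this proposition is logically independent of the previous one.
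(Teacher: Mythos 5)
Your argument is correct, and it takes a genuinely different route from the paper. You identify $L$ with the centraliser $C_G(l)$ --- using that $G$ is free, that centralisers in free groups are cyclic, and that a boundary curve of a compact surface other than the disc is essential and not a proper power --- and then separate any $g\in G\smallsetminus L$ from $L$ by killing the non-trivial commutator $[g,l]$ in a finite $p$-group quotient, since no element of the cyclic group $\langle\phi(l)\rangle$ can fail to commute with $\phi(l)$. Two minor points deserve a word: the step ``abelian subgroups of free groups are cyclic, so $C_G(l)$ is cyclic'' quietly assumes $C_G(l)$ is abelian; the cleanest fix is to note that $C_G(l)$ is a free group with non-trivial centre and non-abelian free groups have trivial centre. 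And primitivity of a boundary curve, while classical, is the one genuinely topological input --- your covering-space sketch is an adequate justification. The paper argues quite differently: it first reduces, via a regular abelian $p$-cover, to the case where $\Sigma$ has at least two boundary components, so that $L$ is a free factor, $G=L\ast F$; it then writes $g\in G\smallsetminus L$ as a reduced word, maps $G$ to $\Z/p^r\ast P$ for suitable $p$-group quotients of the factors so that the image of $g$ remains a reduced word not lying in $\Z/p^r$, and finishes using residual $p$-ness of that free product. Your centraliser argument is shorter and avoids normal forms entirely, but it is special to cyclic subgroups that equal their own centralisers; the paper's reduced-word technique is the template that gets reused in Propositions \ref{surfacesplittingamalg} and \ref{surfacesplittingHNN} and in the remark that free factors of residually $p$ groups are $p$-separable, where the subgroup being separated is no longer a centraliser. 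You are also right that Proposition \ref{curvetopemb} plays no role in either proof of this particular statement.
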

\begin{proof}
If $\Sigma$ has only one boundary component and thus has positive genus, then pass to a regular abelian $p$-cover with more than one boundary component. Then $l$ lifts to this cover, and it suffices to prove that $L$ is $p$-separable when $\Sigma$ has more than one boundary component. In this case, $L$ is a free factor of $G$; that is, $G=L\ast F$ for some free group $F$. Let $g\in G\smallsetminus L$, and write $g$ as a reduced word \[g=l^{m_1} f_1 l^{m_2} f_2\ldots f_n\] where the $m_i\in\Z$, $f_i\in F$ are all non-trivial except possibly $f_n$ (when $n>1$) and $m_1$. Then there is a finite $p$-group quotient $F\twoheadrightarrow P$ in which no non-trivial $f_i$ is mapped to the identity; taking $r$ larger than all $m_i$, the image of $g$ under the quotient map \[\phi: G= L\ast F\to \Z/p^r \ast P\] is a reduced word with some letter in $P\smallsetminus \{1\}$; then $\phi(g)\notin \phi(L)$. Since $\Z/p^r \ast P$ is residually $p$, we can pass to a finite $p$-group quotient distinguishing $\phi(g)$ from the (finitely many) elements of $\phi(L)$; this quotient $p$-group separates $g$ from $L$. Hence $L$ is $p$-separable in $G$.      
\end{proof}
\begin{defn}
Let $P$ be a finite $p$-group. A {\em chief series} for $P$ is a sequence 
\[1=P_n \leq P_{n-1}\leq \cdots\leq P_2\leq P_1=P\]
of normal subgroups of $P$ such that each quotient $P_i/P_{i+1}$ is either trivial or isomorphic to $\Z/p$.
\end{defn}
\begin{theorem}[Higman \cite{Hig64}]\label{Higman}
Let $A, B$ be finite $p$-groups with common subgroup $A\cap B=U$. Then $A\ast_U B$ is residually $p$ if and only if there are chief series $\{A_i\},\{B_i\}$ of $A,B$ such that $\{U\cap A_i\}=\{U\cap B_i\}$. In particular, $A\ast_U B$ is residually $p$ when $U$ is cyclic.
\end{theorem}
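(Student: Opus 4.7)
The plan is to establish both directions of the biconditional; the cyclic case is an immediate corollary.

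For necessity, I would suppose $G = A \ast_U B$ is residually $p$ and choose a descending chain $K_1 \geq K_2 \geq \cdots$ of normal subgroups of $p$-power index in $G$ with trivial intersection. Since $A$ and $B$ are finite, the restrictions $A \cap K_i$ and $B \cap K_i$ reach $\{1\}$ after finitely many steps, and after refinement by inserting characteristic subgroups they become chief series of $A$ and $B$ respectively. The matching condition then follows because $A \cap K_i \cap U = K_i \cap U = B \cap K_i \cap U$ forces the restricted filtrations of $U$ to coincide.

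For sufficiency, I would induct on $|A| + |B|$. The base case $A = U$ or $B = U$ makes $G$ itself a finite $p$-group. Otherwise, pad so the matching chief series have a common length $n$ with $A_i \cap U = B_i \cap U$ for each $i$; this agreement makes the quotient map $\phi_{n-1} \colon G \twoheadrightarrow G_{n-1} := (A/A_{n-1}) \ast_{U/(U\cap A_{n-1})} (B/B_{n-1})$ well-defined. The target $G_{n-1}$ has strictly smaller order sum $|A/A_{n-1}| + |B/B_{n-1}|$ and inherits matching chief series $\{A_j/A_{n-1}\}$, $\{B_j/B_{n-1}\}$, so by induction $G_{n-1}$ is residually $p$. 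Composing $\phi_{n-1}$ with a separating finite $p$-group quotient of $G_{n-1}$ then distinguishes any $g \in G$ lying outside $\ker \phi_{n-1}$.

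The hard part will be separating elements of $\ker \phi_{n-1}$, which is the normal closure in $G$ of the order-$p$ bottom chief-series terms $A_{n-1}$ and $B_{n-1}$. To handle these I would either (i) exploit the freedom to vary the matching chief series, arguing that different compatible choices produce different kernels $\ker \phi_{n-1}$ whose intersection is trivial, or (ii) construct an explicit finite $p$-group $P$ into which $A$ and $B$ embed compatibly over $U$, inducing a map $G \to P$ that survives on $\ker \phi_{n-1}$. The second approach would proceed by extending inductively a common embedding for $A/A_{n-1}$, $B/B_{n-1}$ to one for $A$, $B$ via a central-extension construction in the spirit of the Heisenberg matrices of Proposition \ref{curvetopemb}. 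This step is where the real content of Higman's theorem sits.

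The cyclic corollary follows immediately: when $U \cong \Z/p^k$ its subgroup lattice is totally ordered by the $p^j U$, so any chief series of $A$ (and of $B$) is forced to descend through all $k$ layers of $U$ on restriction; the matching condition is automatic and sufficiency applies.
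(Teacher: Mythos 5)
The paper offers no proof of this statement to compare against: it is imported verbatim from Higman \cite{Hig64} and used as a black box. Judged on its own terms, your proposal proves the easy direction modulo a fixable slip, gets the cyclic corollary right, but leaves the substantive direction --- the one the paper actually relies on --- unproved.

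On necessity, the leak is in the refinement step. The equality $U\cap A\cap K_i=U\cap K_i=U\cap B\cap K_i$ holds before refinement, but the terms you insert to promote $\{A\cap K_i\}$ and $\{B\cap K_i\}$ to chief series can meet $U$ in different intermediate subgroups on the two sides (already when $(U\cap K_i)/(U\cap K_{i+1})\cong \Z/p\times\Z/p$ there are several chains to choose from), so the matching condition does not ``follow''. The clean fix, using the paper's convention that chief factors may be trivial: choose a single $N\nsgp[p]G$ avoiding every nontrivial element of the finite set $A\cup B$, so that $A$ and $B$ embed in $P=G/N$, and intersect a chief series of $P$ with $A$ and with $B$; these are already chief series and match on $U$ for free, with no refinement. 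The serious problem is sufficiency. Your induction correctly isolates the difficulty --- separating the elements of $K=\ker\phi_{n-1}$ --- and then does not resolve it. Route (i) fails outright whenever $A$ and $B$ each have a unique minimal normal subgroup (e.g.\ both cyclic, which is exactly the case the paper needs): there is then no freedom to vary $A_{n-1}$ or $B_{n-1}$, so the kernels cannot be intersected down to the identity. Route (ii) produces at best one map $G\to P$ injective on $A\cup B$; this cannot separate the elements of $K$ that are long products of conjugates of $A_{n-1}$ and $B_{n-1}$, which is precisely what is required. Note also that $K$ has infinite index in $G$ in general, and in the case $A_{n-1}=B_{n-1}\leq U$ it is a central $\Z/p$, where residual $p$-ness of the extension is not automatic. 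So the theorem is not established; a complete argument needs Higman's actual construction (iterated wreath/permutational products yielding finite $p$-quotients injective on normal forms of arbitrary bounded length), or one should simply cite \cite{Hig64} as the paper does.
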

\begin{prop}\label{surfacesplittingamalg}
Let $\Sigma$ be a compact orientable surface and let $l$ be an essential separating simple closed curve on $\Sigma$. Let $\Sigma_1,\Sigma_2$ be the closures of the two components of $\Sigma\smallsetminus l$. Let $G=\pi_1 \Sigma$, $G_i=\pi_1 \Sigma_i$, and $L=\pi_1 l$. Then $G=G_1\ast_L G_2$ is a $p$-efficient splitting. 
\end{prop}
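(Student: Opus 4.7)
The definition of $p$-efficient splitting has three ingredients: $G$ must be residually $p$; each of $G_1, G_2, L$ must be $p$-separable in $G$; and $G$ must induce the full pro-$p$ topology on each of them. Proposition \ref{curvetopemb} already supplies the induced topology on $L$. Once we have shown that $G_i$ is $p$-separable in $G$ and that $G$ induces its full pro-$p$ topology, $p$-separability of $L$ in $G$ will follow formally from $p$-separability of $L$ in $G_i$, which is Proposition \ref{bdycptssep} applied to $\Sigma_i$, of which $l$ is a boundary component and which is not a disc. So the substantive tasks concern only the vertex groups $G_i$ and residual $p$-ness of $G$.

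The key construction is an auxiliary amalgamated product. Given any finite $p$-group quotient $G_1 \to Q_1$, let $\bar L$ denote the image of $L$ in $Q_1$, a cyclic $p$-group of some order $p^r$. By Proposition \ref{curvetopemb} applied to $\Sigma_2$ there is a finite $p$-group quotient $G_2 \to Q_2$ in which a chosen generator of $L$ has image of order exactly $p^r$; identifying these two cyclic images yields a homomorphism
\[\phi : G = G_1 *_L G_2 \longrightarrow Q_1 *_{\bar L} Q_2,\]
whose target is residually $p$ by Higman's Theorem \ref{Higman}, since $\bar L$ is cyclic.

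For the induced topology on $G_1$, take $Q_1 = G_1/N_1$ for a prescribed $N_1 \nsgp[p] G_1$ and post-compose $\phi$ with a finite $p$-group quotient of $Q_1 *_{\bar L} Q_2$ that is injective on the finite subgroup $Q_1$; the kernel of this composition is a normal subgroup of $G$ of $p$-power index meeting $G_1$ in exactly $N_1$. For $p$-separability of $G_1$ in $G$, given $g \in G \setminus G_1$, write $g$ in normal amalgam form $g = u_1 u_2 \cdots u_n$ alternating between $G_1, G_2$, with no internal syllable lying in $L$. Since $L$ is $p$-separable in each $G_i$, we may choose the maps $G_i \to Q_i$ so that every syllable of $g$ in $G_i \setminus L$ has image outside $\bar L$, whence $\phi(g)$ is a reduced amalgam word not lying in $Q_1$. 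Residual $p$-ness of the target then yields a finite $p$-group quotient of $G$ separating $g$ from $G_1$. Residual $p$-ness of $G$ itself is the same argument applied to $g \neq 1$ with $G_1$ replaced by $\{1\}$.

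The delicate point is matching the two cyclic images of $L$ in $Q_1$ and $Q_2$: Proposition \ref{curvetopemb} lets us realise any desired $p$-power order on either side, and by taking direct products of quotients one can simultaneously secure both a prescribed order for the image of $L$ and the separation of the finitely many syllables above. It is the cyclicity of $L$ --- a consequence of $l$ being a simple closed curve --- that both makes this matching possible and supplies the hypothesis for Higman's theorem, and this single feature does both jobs.
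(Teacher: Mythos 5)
Your proposal is correct and follows essentially the same route as the paper: the same auxiliary amalgam $P_1\ast_{\Z/p^r}P_2$ over the matched cyclic images of $L$, residual $p$-ness of that amalgam via Higman's Theorem \ref{Higman}, Proposition \ref{curvetopemb} to equalise the two $p$-power orders, and Proposition \ref{bdycptssep} to keep the syllables of a reduced word out of $L$ so that the image stays reduced. Your extra remarks on deducing $p$-separability of $L$ and residual $p$-ness of $G$ are consistent with how the paper disposes of those points elsewhere.
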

\begin{proof}
Let $H\nsgp[p] G_1, P_1=G_1/H$, and suppose $LH/H\iso\Z/p^r$. By Proposition \ref{curvetopemb}, there is a $p$-group quotient $G_2\twoheadrightarrow P_2$ such that the image of $L$ is again isomorphic to $\Z/p^r$. The quotient $P_1\ast_{\Z/p^r} P_2$ thus obtained is residually $p$, so admits a $p$-group quotient $Q$ distinguishing all the (finitely many) elements of $P_1$; so the kernel of the composite map 
\[G_1 \to G = G_1\ast_L G_2\to P_1\ast_{\Z/p^r} P_2 \to Q \]
is $H$; so $G$ induces the full pro-$p$ topology on $G_1$. 

Now if $g\in G\smallsetminus G_1$, write \[g=a_1 b_1\cdots a_n b_n\] where all $a_i\in G_1$, $b_i\in G_2$ are not in $L$ (except possibly $b_n =1$ if $n>1$, or possibly $a_1\in L$); that is, write $g$ as a reduced word in the amalgamated free product. Note that $b_1\neq 1$. By Proposition \ref{bdycptssep} we may find $H_i\nsgp[p] G_i$ such that the image of every non-trivial $b_j$ in $P_2=G_2/H_2$ does not lie in the image of $L$ (and similarly for $P_1=G_1/H_1$). Suppose that the image of $l$ in $P_i$ is $p^{r_i}$-torsion, and take $r=\max\{r_1,r_2\}$. By Proposition \ref{curvetopemb} we may find $K_i\nsgp[p] G_i$ such that $K_i\cap L=p^r L$; then replace $H_i$ by the deeper subgroup $H_i\cap K_i$. In this way we ensure that the image of $L$ is $\Z/p^r$ in both $P_1$ and $P_2$, and we may form the amalgamated free product $P_1 \ast_{\Z/p^r} P_2$. By construction the image $\phi(g)$ of $g$ under the quotient $\phi:G=G_1\ast_L G_2\to P_1 \ast_{\Z/p^r} P_2$ is a reduced word with a letter in $P_2$, hence does not lie in $P_1$. Since $P_1 \ast_{\Z/p^r} P_2$ is residually $p$ by Theorem \ref{Higman}, we may find a $p$-group quotient $P_1 \ast_{\Z/p^r} P_2\twoheadrightarrow Q$ distinguishing $\phi(g)$ from $P_1$; hence $G\to Q$ distinguishes $g$ from $G_1$ and so $G_1$ is $p$-separable in $G$.
\end{proof}
\begin{theorem}[Chatzidakis \cite{chat94}]\label{Chatz}
Let $P$ be a finite $p$-group, $A,B$ subgroups of $P$, and $f:A\to B$ an isomorphism. Suppose that $P$ has a chief series $\{P_i\}$ such that $f(A\cap P_i)=B\cap P_i$ for all $i$ and the induced map \[f_i : AP_i\cap P_{i-1}/P_i \to  BP_i\cap P_{i-1}/P_i\] is the identity for all $i$. Then $P$ embeds in a finite $p$-group $T$ in which $f$ is induced by conjugation. Hence the HNN extension $P\ast_A$ is residually $p$.
\end{theorem}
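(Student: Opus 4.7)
The plan is to prove the embedding statement by induction on the length $n$ of the given chief series, and then to deduce residual $p$-ness of $P\ast_A$ as a corollary. The base case $n=1$ forces $P$ to be trivial and is vacuous. For the inductive step, let $N = P_{n-1}$; after dropping any trivial quotients in the series one may assume $|N| = p$, and then $N$ lies in the centre of $P$ as a minimal non-trivial normal subgroup. The hypothesis that $f_n$ is the identity on $A\cap N$, combined with $f(A\cap N) = B\cap N$, shows that $f$ fixes $A\cap N$ pointwise, so $f$ descends to an isomorphism $\bar f : \bar A \to \bar B$ between the images in $\bar P = P/N$. The truncated series $\{P_i/N\}_{i<n}$ is a chief series of $\bar P$ of length $n-1$ inheriting all compatibility conditions, so the inductive hypothesis yields a finite $p$-group $\bar T \supseteq \bar P$ and an element $\bar t \in \bar T$ with $\bar t\bar a\bar t^{-1} = \bar f(\bar a)$ for all $\bar a \in \bar A$.

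The main task is to lift this data by one level: construct a finite $p$-group $T$ containing $P$, together with a lift $t$ of $\bar t$, such that conjugation by $t$ realizes $f$ on $A$. The natural strategy is a pullback-and-twist construction. One first attempts to build $T$ as a central extension of $\bar T$ by $N$ whose restriction to $\bar P$ recovers $P \twoheadrightarrow \bar P$; this requires the class in $H^2(\bar P; N)$ classifying $P$ to lie in the image of restriction from $H^2(\bar T; N)$, which in general forces one to enlarge $\bar T$ inside a still larger finite $p$-group before carrying out the pullback. Once a candidate $T$ containing $P$ has been produced, a further twist by a central $2$-cocycle valued in $N$ arranges for a chosen lift $t$ of $\bar t$ to conjugate $A$ exactly as prescribed by $f$. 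The chief series conditions say precisely that $f$ acts trivially on the successive graded pieces of $A$, which is exactly what makes the accumulated cohomological obstructions vanish compatibly. I expect this combined enlarge--pullback--twist step to be the principal technical obstacle, since one must satisfy the extension condition and the correct conjugation action simultaneously within the same finite $p$-group.

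Granted the embedding statement, residual $p$-ness of $P \ast_A$ follows in a standard way: the inclusion $P \hookrightarrow T$ together with the identity $tat^{-1} = f(a)$ yields, by the universal property of HNN extensions, a homomorphism $\phi : P\ast_A \to T$ that is faithful on $P$ and sends the stable letter to $t$. Given a non-trivial element of $P\ast_A$ written in Britton normal form, one chooses a chief series of $P$ fine enough to distinguish each syllable, and iterates the construction if necessary to handle long HNN words, obtaining a homomorphism into a finite $p$-group in which the element survives.
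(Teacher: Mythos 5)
The paper gives no proof of this theorem: it is quoted verbatim from Chatzidakis's article, so there is no in-paper argument to compare yours against and the comparison must be with the cited source. Judged as a proof, your proposal has a genuine gap at exactly the point you flag yourself: the ``enlarge--pullback--twist'' step of the induction is only described as a strategy, and it is the entire content of the theorem. Two separate things are left unproved. First, that $\bar T$ can be enlarged \emph{within the class of finite $p$-groups} so that the class of $1\to N\to P\to\bar P\to 1$ in $H^2(\bar P;N)$ lies in the image of restriction from the enlarged group; this is true but needs an actual construction (a coinduced extension, or an embedding into a suitable wreath product), not the remark that one ``must enlarge''. Second, and more seriously, once some $T\supseteq P$ with $T/N\cong\bar T$ exists and a lift $t$ of $\bar t$ is chosen, conjugation by $t$ sends $a$ to $f(a)c(a)$ for a homomorphism $c:A\to N$, and you must show this discrepancy can be killed --- by a further central twist of the extension, or by replacing $t$ with $ts$ for some $s$ in a larger $p$-group realising $a\mapsto ac(a)^{-1}$ by conjugation (note $ac(a)^{-1}$ need not even lie in $A$ when $N\not\leq A$). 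This is precisely where the hypothesis that each $f_i$ is the identity on the graded pieces has to do its work, and asserting that the ``cohomological obstructions vanish compatibly'' names the difficulty rather than resolving it.

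Separately, your deduction of residual $p$-ness from the embedding does not work as written: the homomorphism $\phi:P\ast_A\to T$ sends the stable letter to an element of finite order, so it cannot separate arbitrary Britton-reduced words from the identity, and ``iterating the construction'' is not explained. The standard repair is different: $\ker\phi$ meets every conjugate of $P$ (and of $A$) trivially, hence acts freely on the Bass--Serre tree and is a free normal subgroup of $p$-power index in $P\ast_A$; since free groups are residually $p$ and a characteristic $p$-power-index subgroup of $\ker\phi$ is normal of $p$-power index in $P\ast_A$, residual $p$-ness follows.
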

\begin{prop}\label{surfacesplittingHNN}
Let $\Sigma$ be a compact orientable surface and let $l$ be a non-separating simple closed curve on $\Sigma$. Choose a regular neighbourhood $l\times [-1,1]$ of $l$ in $\Sigma$ and let $\Sigma_1=\Sigma\smallsetminus (l\times(-1,1))$. Let $G=\pi_1 \Sigma$, $H=\pi_1 \Sigma_1$, and $A=\pi_1 (l\times\{-1\})$, $B=\pi_1 (l\times\{+1\})$. Let $f:A\to B$ be the natural isomorphism. Then the HNN extension $G=H\ast_A$ is a $p$-efficient splitting.
\end{prop}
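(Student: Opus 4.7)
The plan is to mimic the proof of Proposition \ref{surfacesplittingamalg}, with Chatzidakis's Theorem \ref{Chatz} taking the place of Higman's Theorem \ref{Higman}. Since $l$ is non-separating, $\Sigma_1$ is connected with at least two boundary components, so $H = \pi_1 \Sigma_1$ is free, and $A$, $B$ are infinite cyclic subgroups corresponding to the two new boundary components of $\Sigma_1$ arising from the cut.

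For the assertion that $G$ induces the full pro-$p$ topology on $H$: given $K \nsgp[p] H$, I first apply Proposition \ref{curvetopemb} to $\Sigma_1$ (for both $A$ and $B$) to refine $K$ so that $K \cap A = p^r A$ and $K \cap B = p^r B$ for a common large $r$. Setting $P = H/K$ and denoting the quotient map by $\phi$, the map $f$ then descends to an isomorphism $\bar f : \phi(A) \to \phi(B)$ of cyclic subgroups of order $p^r$. By further refining $K$ I arrange that $P$ admits a chief series meeting Chatzidakis's hypotheses for $\bar f$. Theorem \ref{Chatz} then embeds $P$ into a finite $p$-group $T$ in which $\bar f$ is realised by conjugation by some $t \in T$; the universal property of the HNN extension yields a homomorphism $G = H \ast_A \to T$ sending the stable letter to $t$ and restricting to $\phi$ on $H$, whose kernel on $H$ is contained in $K$.

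For the $p$-separability of $H$ in $G$: given $g \in G \smallsetminus H$, I write $g$ in HNN-reduced form $g = h_0 t^{\epsilon_1} h_1 \cdots t^{\epsilon_n} h_n$ with $n \geq 1$. Using Proposition \ref{bdycptssep} I find a $p$-group quotient of $H$ separating each $h_i$ from the appropriate subgroup ($A$ or $B$) at each potential reduction site. Combining this with the Chatzidakis construction applied to a sufficiently deep refinement of $K$, I obtain a finite $p$-group quotient $G \to T'$ mapping $g$ to a reduced HNN word of length at least $1$, hence lying outside the image of $H$; a further $p$-separation step then distinguishes $g$ from the finitely many elements of that image.

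The main obstacle is arranging Chatzidakis's chief series condition after refining $K$. Since $\phi(A)$ and $\phi(B)$ are cyclic of equal order and $\bar f$ is an isomorphism between them, this amounts to choosing the refinement so that, for some chief series of $P$, the intersections of $\phi(A)$ and $\phi(B)$ with successive terms have matching orders and $\bar f$ induces the identity on each $\Z/p$ quotient stratum. The freedom granted by Proposition \ref{curvetopemb} should make this achievable, in analogy with how the proof of Proposition \ref{surfacesplittingamalg} passed to deeper quotients to apply Higman's theorem.
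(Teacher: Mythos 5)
Your overall strategy is the paper's: replace Higman's theorem by Chatzidakis's, get the induced topology statement by realising a quotient of $f$ by conjugation in a finite $p$-group, and prove $p$-separability of $H$ via reduced HNN words. However, the step you yourself flag as ``the main obstacle'' --- arranging a chief series of $P$ satisfying Chatzidakis's hypotheses --- is left as an assertion, and it is precisely the nontrivial content of the proof. Controlling the orders of $\phi(A)$ and $\phi(B)$ via Proposition \ref{curvetopemb} is not enough: Theorem \ref{Chatz} requires not only $\bar f(\phi(A)\cap P_i)=\phi(B)\cap P_i$ for every term of some chief series, but also that each induced map $f_i$ on the $\Z/p$-strata be the \emph{identity}, and ``further refining $K$'' provides no mechanism for producing such a series. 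An arbitrary deep $K\nsgp[p]H$ with $K\cap A=p^rA$ and $K\cap B=p^rB$ need not admit one.

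The paper's resolution is to take $P=H/\gamma^{(p)}_n(H)$, a lower central $p$-quotient, and to exploit two specific facts: (i) since the boundary curve $l\times\{-1\}$ is primitive in $H_1(\Sigma_1)$ and $P_{\rm ab}=H_{\rm ab}\otimes\Z/p^{n-1}$, the image of $a$ has order exactly $p^{n-1}$, which is the maximal order of any element of $P$; hence $\phi(A)$ (and likewise $\phi(B)$) injects into $P_{\rm ab}$; and (ii) $a$ and $b$ have the \emph{same} image in $P_{\rm ab}$, because the two sides of the cut are homologous in $\Sigma_1$. These let one build a chief series of $P$ whose first $n$ terms are preimages of a chief series of $P_{\rm ab}$ adapted to $\langle\bar a\rangle=\langle\bar b\rangle$; below level $n$ the intersections with $\phi(A)$ and $\phi(B)$ are trivial, and above it the maps $f_i$ are the identity because $a$ and $b$ agree in $P_{\rm ab}$. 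Without invoking the homological coincidence of the two boundary curves and the injection of $\phi(A)$ into the abelianisation, your ``analogy with Higman'' does not go through, since Higman's cyclic-amalgam criterion has no analogue of the identity-on-strata condition. The reduced-word half of your argument is fine and matches the paper once the induced-topology half is repaired.
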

\begin{proof}
First we will prove that $G$ induces the full pro-$p$ topology on $H$. Let $P=H / \gamma^{(p)}_n(H)$ be one of the lower central $p$-quotients of $H$, and $\phi:H\to P$ the quotient map. Let $a, b$ denote the generators of $\phi(A),\phi(B)$. Note that since commutator subgroups and terms of the lower central $p$-series are verbal subgroups, there is a commuting diagram:
\[\begin{tikzcd} 
{} & H \ar{ld} \ar{dr} & {} \\
H/\gamma^{(p)}_n(H)=P \ar{d} & {}& H/[H,H] = H_{\rm ab}\ar{d}\\
P/[P,P] \ar{rr}{\iso} & {} & H_{ab}/\gamma^{(p)}_n(H_{\rm ab})
\end{tikzcd}\] so that $P_{\rm ab} =H_{\rm ab}\otimes \Z/p^{n-1}$. The image of $a$ in $P$ thus has order at least $p^{n-1}$; by definition of the lower central series any element of $P$ has order at most $p^{n-1}$. Hence the image of $A$ in $P$ injects into $P_{\rm ab}$. Since $P$ is a characteristic quotient of $H$ and there is an automorphism of $H$ taking $a$ to $b$, the order of $b$ will also be $p^{n-1}$. Furthermore and $a,b$ are mapped to the same element of $P_{\rm ab}$. Now construct a chief series $(P_i)$ for $P$ whose first $n$ terms are the preimages of the terms of a chief series for $P_{\rm ab}$ which intersects to a chief series on the subgroup of $P_{\rm ab}$ generated by the image of $a$. Then for $i\geq n$, we have $\phi(A)\cap P_i = \phi(B)\cap P_i=1$ and for $i<n$ the conditions of Theorem \ref{Chatz} hold by construction. Hence $P\ast_{\phi(A)}$ is residually $p$, and we may take a $p$-group quotient $P\ast_{\phi(A)}\to Q$ in which no element of $P$ is killed; then the kernel of the composite map
\[ H\to G=H\ast_A\to P\ast_{\phi(A)}\to Q\]
is $\gamma^{(p)}_n(H)$ as required. 

To show that $H$ is $p$-separable in $G$, proceed as in the proof of Proposition \ref{surfacesplittingamalg}; that is, write $g\in G\smallsetminus H$ as a reduced word in the sense of HNN extensions, and take a sufficiently deep lower central $p$-quotient $P=H / \gamma^{(p)}_n(H)$ so that the image of $g$ in $P\ast_{\phi(A)}$ is again a reduced word, hence not in $P$. As shown above, $P\ast_{\phi(A)}$ is residually $p$, so admits a $p$-group quotient $Q$ distinguishing the image of $g$ from the image of $P$; this quotient $Q$ of $G$ exhibits that $H$ is $p$-separable in $G$.
\end{proof}
Propositions \ref{surfacesplittingamalg} and \ref{surfacesplittingHNN} together give the following more general result:
\begin{prop}\label{surfacesplitting}
Let $\Sigma$ be a compact orientable surface and let $l_1,\ldots,l_n$ be a collection of pairwise disjoint, non-isotopic, essential simple closed curves in $\Sigma$. Then the splitting of $\Sigma$ along the $l_i$ gives a $p$-efficient graph of groups decomposition of $\pi_1\Sigma$.
\end{prop}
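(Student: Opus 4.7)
The plan is to induct on $n$, the number of curves. The base case $n = 1$ is Proposition \ref{surfacesplittingamalg} (if $l_1$ is separating) or Proposition \ref{surfacesplittingHNN} (if non-separating). For the inductive step I pick out one curve, say $l_n$, and split $\Sigma$ along it alone, obtaining by the base case a $p$-efficient one-edge graph of groups decomposition of $G = \pi_1 \Sigma$ with vertex groups $G_v = \pi_1 \Sigma'_v$, where the $\Sigma'_v$ are the components of $\Sigma$ cut along a regular neighbourhood of $l_n$. The remaining curves $l_1, \ldots, l_{n-1}$ distribute among the $\Sigma'_v$, and in each piece they remain pairwise disjoint, essential, and non-isotopic: essentiality in $\Sigma'_v$ holds because an $l_i$ parallel to a new boundary component would cobound an annulus with $l_n$, contradicting their non-isotopy in $\Sigma$; and any two $l_i, l_j$ isotopic in $\Sigma'_v$ would cobound an annulus in $\Sigma'_v \subseteq \Sigma$, contradicting their non-isotopy there. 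So the inductive hypothesis applies to each $\Sigma'_v$, and the full decomposition of $G$ along $l_1, \ldots, l_n$ is precisely the refinement of the outer one-edge splitting by the inner $p$-efficient decompositions of each $G_v$ provided by the inductive hypothesis.

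What remains is to check that this refined graph of groups is itself $p$-efficient. For this I will use two elementary transitivity facts about a chain of subgroups $A \leq B \leq C$: (i) if $C$ induces the full pro-$p$ topology on $B$ and $B$ induces it on $A$, then $C$ induces it on $A$; (ii) if additionally $B$ is $p$-separable in $C$ and $A$ is $p$-separable in $B$, then $A$ is $p$-separable in $C$. Both reduce to nested-kernel chases: for (i), given $M \nsgp[p] A$, pull back to $M' \nsgp[p] B$ with $M' \cap A \leq M$, then to $N \nsgp[p] C$ with $N \cap B \leq M'$, giving $N \cap A \leq M$; for (ii), separate $g \in C \setminus A$ from $B$ directly if $g \notin B$, and if $g \in B \setminus A$ separate $g$ from $A$ inside $B$ and then lift the separating subgroup to $C$ using the mechanism of (i). Applying these facts with $A$ any vertex or edge group of the refined decomposition, $B = G_v$, and $C = G$, the outer $p$-efficiency together with the inner $p$-efficiency given by induction immediately yields $p$-separability of $A$ in $G$ and full pro-$p$ embedding of $A$ into $G$. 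Residual $p$-ness of $G$ is standard for surface groups.

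The substantive work has already been carried out in the base-case propositions; the main obstacle here is bookkeeping—verifying that essentiality and non-isotopy persist under cutting, and formulating the transitivity lemmas cleanly enough that the induction goes through without having to revisit the explicit construction of suitable $p$-group quotients at each level of the refinement.
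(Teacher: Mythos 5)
Your proposal is correct and follows exactly the route the paper intends: the paper offers no explicit proof, simply asserting that Propositions \ref{surfacesplittingamalg} and \ref{surfacesplittingHNN} ``together give'' the result, and your induction on the number of curves plus the two transitivity lemmas (for $p$-separability and for inducing the full pro-$p$ topology along a chain $A\leq B\leq C$) is the standard way to make that assertion precise. Both transitivity arguments check out (the key identity being $AN\cap B=A(N\cap B)$ for $N\nsgp[p] C$ and $A\leq B$), and the bookkeeping about essentiality and non-isotopy persisting under cutting is fine.
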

The following proposition is an easy consequence of \cite{DdSMS03}, Propositions 0.8 and 0.10.
\begin{lem}\label{UnipAction}
Let $P$ be a finite $p$-group and suppose $\psi\in \Aut(P)$ acts unipotently on the $\F_p$-vector space $H_1(P;\Z/p)$. Then $\psi$ has $p$-power order.
\end{lem}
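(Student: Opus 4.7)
The plan is to exploit the two standard facts from \cite{DdSMS03} that are being invoked: (i) if $P$ is a finite $p$-group with minimal number of generators $n$, then $P/\Phi(P)$ is an $\F_p$-vector space of dimension $n$ naturally isomorphic to $H_1(P;\Z/p)$, where $\Phi(P) = [P,P]P^p$ is the Frattini subgroup, and (ii) the natural map
\[ \rho : \Aut(P) \to \Aut(P/\Phi(P)) \iso GL_n(\F_p) \]
has kernel a $p$-group. These are essentially the Burnside basis theorem together with the fact that automorphisms fixing the Frattini quotient are unitriangular with respect to a chief series and hence $p$-power order.

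Given these facts, the argument splits cleanly. First I would observe that the order of $\psi$ is the product of the order of $\rho(\psi) \in GL_n(\F_p)$ and the order of $\psi$ restricted to a suitable iterate, or more simply: if $\rho(\psi)$ has order $m$ in $GL_n(\F_p)$ then $\psi^m \in \ker \rho$, and $\ker\rho$ is a $p$-group by (ii), so the order of $\psi$ divides $m \cdot |\ker\rho|$. It therefore suffices to show that $\rho(\psi)$ has $p$-power order.

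Now $\rho(\psi)$ is by hypothesis a unipotent element of $GL_n(\F_p)$, so $\rho(\psi) = I + N$ with $N^n = 0$. Choose $k$ with $p^k \geq n$. Since we are in characteristic $p$, the freshman's dream gives
\[ \rho(\psi)^{p^k} = (I+N)^{p^k} = I + N^{p^k} = I, \]
so $\rho(\psi)$ has order dividing $p^k$. Combining with the previous paragraph, $\psi$ itself has $p$-power order, as required.

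No step looks delicate once the two inputs from \cite{DdSMS03} are in hand; the only thing to be careful about is the identification $P/\Phi(P) \iso H_1(P;\Z/p)$ so that the unipotence hypothesis really does translate into unipotence of $\rho(\psi)$, but this is immediate since $\Phi(P)$ is characteristic and coincides with the kernel of $P \to H_1(P;\Z/p)$.
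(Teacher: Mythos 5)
Your proof is correct and is exactly the argument the paper has in mind: the paper gives no details beyond citing Propositions 0.8 and 0.10 of \cite{DdSMS03}, and those are precisely the two inputs you use, namely the identification $H_1(P;\Z/p)\iso P/\Phi(P)$ and the fact that the kernel of $\Aut(P)\to\Aut(P/\Phi(P))$ is a $p$-group. Combining these with the observation that a unipotent element of $GL_n(\F_p)$ has $p$-power order (via $(I+N)^{p^k}=I+N^{p^k}=I$ for $p^k\geq n$) is the intended ``easy consequence''.
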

Lemma \ref{UnipAction} was used in \cite{kob13} to prove that certain semidirect products are residually $p$. The reader is warned that in the arXiv version \cite{kob09} of that paper, Lemma \ref{UnipAction} is stated in the context of finite nilpotent groups, where it is false.

Lemma \ref{UnipAction} allows us to give a complete characterisation of the pro-$p$ topology on certain semidirect products. First we fix some notation. Let $G,C$ be finitely generated groups, and let $\Phi:C\to\Aut(G)$ be a homomorphism. Denote the automorphism $\Phi(c)$ by $\Phi_c$ and define the semidirect product $G\rtimes C$ to be the set $G\times C$ equipped with group operation 
\[ (g_1,c_1)\star (g_2,c_2) = (g_1\Phi_{c_1}(g_2), c_1 c_2)\]
Identify $G$ with $\{(g,1):g\in G\}$ and $C$ with $\{(1,c):c\in C\}$. There is a function (not a homomorphism of course) $u$ from the semidirect product $G\rtimes C$ to the direct product $G\times C$ `forgetting the map $\Phi$', which is the identity on the underlying sets of the two groups. Note that if $N$ is a characteristic normal subgroup of $G$ and $D$ is a subgroup of $C$, then $N\rtimes D$ is a subgroup of $G\rtimes C$ and $u(N\rtimes D)=N\times D$. 
\begin{prop}\label{semidirecttopology}
Let $G,C$ be finitely generated groups, and let $\Phi:C\to\Aut(G)$ be a homomorphism. Suppose that each automorphism $\Phi_c$ acts unipotently on $H_1(G;\F_p)$. Then the forgetful function $u:G\rtimes C\to G\times C$ is a homeomorphism, where both groups are given their pro-$p$ topology.
\end{prop}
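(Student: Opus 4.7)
The plan is to exhibit corresponding neighbourhood bases of the identity in the two pro-$p$ topologies via the set-theoretic bijection $u$. A basis of open neighbourhoods of the identity in $G\times C$ is given by products $N\times D$ with $N$ a characteristic subgroup of $G$ of $p$-power index and $D$ a characteristic subgroup of $C$ of $p$-power index. Because $N$ is characteristic, it is $\Phi$-invariant, so $N\rtimes D$ is a subgroup of $G\rtimes C$ with the same underlying set as $N\times D$. To conclude that $u$ is a homeomorphism it therefore suffices to prove that each such $N\times D$ contains $u(N\rtimes D'')$ for some $D''\sbgp[p] D$ with $N\rtimes D''$ open in $G\rtimes C$, and conversely that each open normal $p$-power-index subgroup $M$ of $G\rtimes C$ contains some $N\times D''$ as a set.

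For the first direction I want to find $D''\sbgp[p] D$ such that $N\rtimes D''$ is normal of $p$-power index in $G\rtimes C$. A direct computation of conjugation shows this happens precisely when $D''$ is normal in $C$ and acts trivially on $G/N$. Producing such $D''$ is the step where the unipotency hypothesis is essential: each $\Phi_c$ descends to an automorphism of the finite $p$-group $G/N$ (since $N$ is characteristic), and its induced action on $H_1(G/N;\F_p)$ is unipotent, being obtained by passage to the quotient from the unipotent action on $H_1(G;\F_p)$. Lemma~\ref{UnipAction} then gives that each such induced automorphism has $p$-power order. Since $\Aut(G/N)$ is finite and every element of the image of $C\to\Aut(G/N)$ has $p$-power order, that image is a $p$-group by Cauchy's theorem. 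Its kernel $K$ is therefore normal of $p$-power index in $C$, and $D''=D\cap K$ does the job.

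For the reverse direction, given $M\nsgp[p] G\rtimes C$, set $N'=M\cap G$ and $D''=M\cap C$ under the usual identifications. Both are normal of $p$-power index in $G$ and $C$ respectively, since their quotients embed into the finite $p$-group $(G\rtimes C)/M$. Because $(n,1)(1,d)=(n,d)$ in $G\rtimes C$, we have $N'\rtimes D''\subseteq M$ as subsets of $G\times C$. Replacing $N'$ by a characteristic $p$-power-index subgroup $N$ of $G$ contained in $N'$, the set $N\times D''$ is an open neighbourhood of the identity in $G\times C$ contained in $u(M)$.

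The main obstacle is establishing that the image of $C$ in $\Aut(G/N)$ is a $p$-group, so that an open subgroup of $C$ acting trivially on $G/N$ exists. Lemma~\ref{UnipAction} supplies this one element at a time and Cauchy's theorem promotes it to the whole image; without the unipotent hypothesis some $\Phi_c$ could act on a finite $p$-group quotient with order prime to $p$, and no open $D''$ acting trivially would exist, breaking the argument.
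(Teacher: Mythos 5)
Your proof is correct and follows essentially the same route as the paper: both directions match neighbourhood bases at the identity, and the key step in each is identical, namely using Lemma~\ref{UnipAction} to force the image of $C\to\Aut(G/N)$ to be a finite $p$-group so that $N\rtimes(D\cap K)$ is open and normal. The only point you elide is why matching bases at $1$ suffices for the non-homomorphic bijection $u$; the paper justifies this by observing that left-multiplication by $(g,1)$ and right-multiplication by $(1,c)$ are continuous in both topologies and commute with $u$, which transports the identity-neighbourhood comparison to every point.
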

\begin{proof}
We first claim that it suffices to prove the following two statements:
\begin{enumerate}[(i)]
\item For each $U\subseteq G\rtimes C$ a basic open neighbourhood of 1, there exists $V\subseteq G\times C$ open such that $1\in V\subseteq u(U)$.
\item For each $U\subseteq G\times C$ a basic open neighbourhood of 1, there exists $V\subseteq G\rtimes C$ open such that $1\in V\subseteq u^{-1}(U)$.
\end{enumerate}
That is, the neighbourhood bases at 1 match up. For left-multiplication by $(g,1)$ and right-multiplication by $(1,c)$ are continuous both as maps on $G\times C$ and on $G\rtimes C$, and commute with $u$. Thus if $U\subseteq G\rtimes C$ is a basic open neighbourhood of $(g,c)$, then finding $V\subseteq G\times C$ such that $1\in V\subseteq (g^{-1},1)u(U)(1,c^{-1})$ gives a ($G\times C$)-open set $(g,1)V(1,c)$ exhibiting that $u(U)$ is a $G\times C$-neighbourhood of $(g,c)$. Hence (i) implies that $u$ is an open map; similarly (ii) implies that $u$ is continuous. 

Let us prove (i). If $U\nsgp[p] G\rtimes C$ is a basic open neighbourhood of 1, then $U\cap G\nsgp[p] G$ and $U\cap C\nsgp[p] C$, so $V= (U\cap G)\times (U\cap C)$ is a normal $p$-power index subgroup of $G\times C$, so is $(G\times C)$-open. Also $1\in V\subseteq u(U)$ since if $(g,1), (1,c)\in U$ then $(g,c)=u((g,1)\star (1,c))\in U$. So (i) holds and $u$ is an open mapping.

The more difficult statement is (ii). Let $U=N\times D$ be a basic open neighbourhood of 1 in $G\times C$, where $N\nsgp[p] G, D\nsgp[p] C$ and $N$ is characteristic in $G$. Then $N\rtimes D=u^{-1}(U)$ is a subgroup of $G\rtimes C$ with index a power of $p$; however it need not be normal. We will find a deeper subgroup that is normal in $G\rtimes C$, still with index a power of $p$. 

Now $H_1(G/N;\F_p)$ is a quotient of $H_1(G;\F_p)$, on which $\Phi_c$ acts unipotently for every $c\in C$; so by Lemma \ref{UnipAction}, the map induced by $\Phi_c$ on $G/N$ has order a power of $p$. Thus every element of the image of $C\to \Aut(G/N)$ has order a power of $p$; so the image is a finite $p$-group. Let $K\nsgp[p] C$ be the kernel of this map. Each element of $D\cap K$ acts trivially on $G/N$, so we have a quotient map
\[ G\rtimes C\to (G/N)\rtimes C\to (G/N)\rtimes (C/D\cap K) \]
whose kernel $V=N\rtimes (D\cap K)$ is thus a normal subgroup of $G\rtimes C$ with index a power of $p$, and $1\in V\subseteq u^{-1}(U)$. Thus (ii) holds as required.
\end{proof}
\begin{rmk}
Note that the pro-$p$ topology on the group $G\times C$ is the product of the pro-$p$ topologies of $G$ and $C$. In particular if both $G,C$ are residually $p$ then so is $G\times C$, hence under the conditions of the above proposition $G\rtimes C$ is also residually $p$.
\end{rmk}
\begin{theorem}
Let $M$ be a compact fibred 3-manifold with fibre $\Sigma$ and monodromy $\phi$, where $\Sigma$ is a surface of negative Euler characteristic. Let $p$ be a prime. Then $M$ has a finite-sheeted cover with $p$-efficient JSJ decomposition.
\end{theorem}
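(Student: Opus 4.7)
The plan is to realise $\pi_1 M = \pi_1\Sigma\rtimes_\phi\Z$ and pass to a finite cover whose monodromy both fixes each JSJ curve of the fibre individually \emph{and} acts unipotently on the mod-$p$ first homology of the fibre. We then reduce $p$-efficiency of the JSJ decomposition at the 3-manifold level to $p$-efficiency of a splitting of the fibre along a system of simple closed curves --- which is Proposition~\ref{surfacesplitting} --- via the semidirect-to-direct-product comparison of Proposition~\ref{semidirecttopology}.

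First, Nielsen--Thurston theory supplies a (possibly empty) disjoint collection $l_1,\ldots,l_n$ of pairwise non-isotopic essential simple closed curves on $\Sigma$, invariant as a set up to isotopy under $\phi$, along which $\phi$ decomposes into pseudo-Anosov and periodic pieces. Replacing $\phi$ by a suitable power (i.e.\ passing to the cyclic cover of $M$ corresponding to $\pi_1\Sigma\rtimes k\Z$) we may assume $\phi$ fixes each $l_i$ setwise; the mapping tori of the $l_i$ are then essential embedded tori in the cover whose complementary pieces are hyperbolic or Seifert fibred mapping tori, and (after collapsing any that fail to be JSJ) they realise the JSJ decomposition.

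Next, pick a characteristic $p$-power-index subgroup $K\leq \pi_1\Sigma$ (e.g.\ a term of the mod-$p$ lower central series) and let $\tilde\Sigma\to\Sigma$ be the associated finite cover; since $K$ is characteristic, $\phi$ lifts to a homeomorphism $\tilde\phi$ of $\tilde\Sigma$. The $\tilde\phi$-action on the finite $\F_p$-vector space $H_1(\tilde\Sigma;\F_p)$ has finite order, so a further power $\tilde\phi^m$ acts unipotently. Passing to the corresponding finite cover $\tilde M$ with $\pi_1\tilde M=\pi_1\tilde\Sigma\rtimes_{\tilde\phi^m}\Z$ arranges the hypotheses of Proposition~\ref{semidirecttopology}. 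Observe that the same proposition applies uniformly to $\pi_1\tilde M$, to each JSJ vertex subgroup $\pi_1\tilde\Sigma_j\rtimes\Z$, and to each edge group $\pi_1\tilde l_i\times\Z$ (restrictions of a unipotent action remain unipotent), identifying each via the forgetful map $u$ with the corresponding direct product. Under this identification the JSJ splitting of $\pi_1\tilde M$ becomes the direct product of a splitting of $\pi_1\tilde\Sigma$ along essential simple closed curves with the trivial splitting of $\Z$; Proposition~\ref{surfacesplitting} gives $p$-efficiency of the surface splitting, and the remark following Proposition~\ref{semidirecttopology} --- namely, that the pro-$p$ topology on a direct product is the product topology --- transfers this through $u$ to the 3-manifold level.

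The main obstacle is synchronising the two requirements of Proposition~\ref{semidirecttopology}: the monodromy must fix each JSJ curve individually, \emph{and} act unipotently on mod-$p$ homology of the fibre, without one condition undoing the other. This is resolved by working characteristically on the fibre (so that $\tilde\phi$ descends at all) and by taking successive powers of the monodromy, each of which only refines the invariance structure already in place.
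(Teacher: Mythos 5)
Your strategy is the same as the paper's: reduce the fibre along a Nielsen--Thurston invariant curve system, pass to a power of the monodromy so that Proposition \ref{semidirecttopology} applies, and transfer the $p$-efficiency of the surface splitting (Proposition \ref{surfacesplitting}) through the forgetful homeomorphism $u$. Two remarks. First, the detour through a characteristic $p$-power cover $\tilde\Sigma$ of the fibre is unnecessary --- the action of $\phi$ on the finite vector space $H_1(\Sigma;\F_p)$ already has finite order, so a power of $\phi$ itself acts unipotently --- and it costs you extra bookkeeping: the preimages in $\tilde\Sigma$ of the curves $l_i$ and of the complementary subsurfaces may be disconnected and permuted by $\tilde\phi$, so you must take yet further powers to restore the setwise invariance you had already arranged downstairs.

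Second, and more seriously, there is a gap where you write ``restrictions of a unipotent action remain unipotent.'' To apply Proposition \ref{semidirecttopology} to a vertex subgroup $\pi_1\Sigma_j\rtimes\Z$ --- which you need in order to conclude that the subspace topology it inherits equals its own full pro-$p$ topology --- you must know that the monodromy acts unipotently on $H_1(\Sigma_j;\F_p)$. This is not the restriction of the action on $H_1(\Sigma;\F_p)$ to an invariant subspace: the map $H_1(\Sigma_j;\F_p)\to H_1(\Sigma;\F_p)$ induced by inclusion is in general not injective (its kernel contains combinations of boundary classes of $\Sigma_j$, for instance when some $l_i$ separates $\Sigma$), so unipotence on the homology of the whole fibre does not formally pass to the subsurfaces. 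The claim can be rescued --- the kernel is spanned by classes of the $l_i$, which a suitable further power of the monodromy fixes, so unipotence on $H_1(\Sigma_j;\F_p)$ follows from a two-step filtration --- but the cleaner route, and the one the paper takes, is to choose the power $k'$ so that $\phi^{k'}$ acts as the identity on each $H_1(\Sigma_j;\F_p)$ directly, and then deduce unipotence on $H_1(\Sigma;\F_p)$ from the Mayer--Vietoris sequence rather than arguing in the opposite direction.
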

\begin{proof}
Without loss of generality both $M$ and $\Sigma$ are orientable. Then, possibly after performing an isotopy of the monodromy, the JSJ tori of $M$ intersect $\Sigma$ in a collection of disjoint non-isotopic essential simple closed curves $\{l_1,\ldots, l_n\}$ which are permuted by the monodromy $\phi$. The $l_i$ divide $\Sigma$ into a number of subsurfaces $\Sigma_1,\ldots \Sigma_m$. The monodromy acts on the set of $\Sigma_j$. Each piece of the JSJ decomposition corresponds to an orbit of this action, and is fibred over any element of that orbit. If $n_j$ is the size of the orbit of $\Sigma_j$ then $\phi^{n_j}$ acts on $\Sigma_j$ either periodically or as a pseudo-Anosov. The monodromy $\phi$ also acts on $H_1(\Sigma;\F_p)$; let $k$ be the order of $\phi$ in
\[ {\rm Sym}(\{\Sigma_1,\ldots,\Sigma_m\})\times {\rm Sym}(\{l_1,\ldots,l_n\})\]
Finally take some multiple $k'$ of $k$ such that $\phi^{k'}$ acts by the identity on each $H_1(\Sigma_j;\F_p)$. Let $\psi=\phi^{k'}$ and let $\tilde M$ be the surface bundle over $\Sigma$ with monodromy $\psi$, an index $k'$ cover of $M$. Then $\psi$ fixes each $\Sigma_j$ and $l_i$, and acts on each $\Sigma_j$ periodically or as a pseudo-Anosov, so that the JSJ tori of $\tilde M$ are precisely the tori $l_i\times \sph{1}$, and the pieces of the JSJ decomposition are the mapping tori $\tilde M_j = \Sigma_j\rtimes_\psi \sph{1}$. We claim that $\tilde M$ has $p$-efficient JSJ decomposition. We must show that each vertex (respectively edge group) $\pi_1(\Sigma_j\rtimes_\psi \sph{1})$ (respectively edge group $\pi_1 (l_i\times\sph{1})$) is $p$-separable and inherits the full pro-$p$ topology from $\pi_1 \tilde M$. We prove this statement for the vertex groups, the proof for edge groups being similar.

Choose a basepoint $x\in \Sigma_j$ and a loop $\gamma$ lying in $\tilde M_j$ transverse to the fibres and passing through $x$. The homotopy class of $\gamma$ gives a splitting of the quotient map to \Z[] coming from the fibration, hence gives an identification of $\pi_1 \tilde M$ with $\pi_1 \Sigma\rtimes_{\psi} \Z$ in which the vertex group $\pi_1 \tilde M_j$ is embedded as $\pi_1 \Sigma_j \rtimes_\psi \Z$. The forgetful function $u:\pi_1 \Sigma\rtimes_{\psi} \Z\to \pi_1 \Sigma\times \Z$ now sends $\pi_1 \Sigma_j \rtimes_\psi \Z$ to $\pi_1 \Sigma_j \times \Z$. The action of $\psi$ on each $H_1(\Sigma_i;\F_p)$ is unipotent by construction, hence also is the action on $H_1(\Sigma;\F_p)$. Hence by Proposition \ref{semidirecttopology}, $u$ is a homeomorphism of pairs
\[(\pi_1 \tilde M,\pi_1 \tilde M_j)=(\pi_1 \Sigma \rtimes_\psi \Z,\pi_1 \Sigma_j \rtimes_\psi \Z)\to (\pi_1 \Sigma \times \Z,\pi_1 \Sigma_j \times \Z)\]
By Proposition \ref{surfacesplitting}, $\pi_1 \Sigma_j$ is $p$-separable in $\pi_1 \Sigma$ and inherits its full pro-$p$ topology. The same is thus true of $\pi_1 \Sigma_j\times\Z$ in the product topology; the homeomorphism $u$ now yields the result.  
\end{proof}

\section{Conjugacy $p$-separability}
In \cite{WZ10} Wilton and Zalesskii proved a combination theorem for conjugacy separability. The proof of this uses the theory of profinite groups acting on profinite trees. The parallel theory for pro-$p$ groups yields the following theorem:
\begin{theorem}\label{CpScombination}
Let ${\cal G}=(X, G_\bullet)$ be a graph of groups with conjugacy $p$-separable vertex groups $G_v$. Let $G=\pi_1({\cal G})$ and suppose that the graph of groups $\cal G$ is $p$-efficient and that the action of $\proP{G}$ on the standard tree of $\proP{\cal G}=(X, \proP{G_\bullet})$ is 2-acylindrical. Suppose that the following conditions hold for any vertex $v$ of $X$ and any incident edges $e,f$ of $v$ in $X$:
\begin{enumerate}
\item for any $g\in G_v$ the double coset $G_e g G_f$ is $p$-separable in $G_v$;
\item the edge group $G_e$ is conjugacy $p$-distinguished in $G_v$;
\item the intersection of the closures of $G_e$ and $G_f$ in the pro-$p$ completion is equal to the pro-$p$ completion of their intersection, i.e.\ $\bar G_e\cap \bar G_f = \proP{G_e\cap G_f}$.
\end{enumerate}
Then $G$ is conjugacy $p$-separable.
\end{theorem}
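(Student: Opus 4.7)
The plan is to adapt the Wilton--Zalesskii combination theorem from \cite{WZ10} to the pro-$p$ setting, using the pro-$p$ Bass--Serre theory invoked in the preliminaries. First, I would reduce to the following claim: if $g,h\in G$ are non-conjugate in $G$, then they are non-conjugate in $\proP{G}$. This suffices because the conjugacy class of $h$ in the profinite group $\proP{G}$ is compact and hence closed, so non-conjugacy there yields an open normal subgroup---equivalently a finite $p$-group quotient of $G$---separating $g$ from every conjugate of $h$. By $p$-efficiency, $\proP{G}=\Pi_1(\proP{\cal G})$ acts on the standard pro-$p$ tree $T=S(\proP{\cal G})$, with vertex and edge stabilisers the conjugates of $\proP{G_v}$ and $\proP{G_e}$, and the abstract tree $S(\cal G)$ sits inside $T$.

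Now suppose $x\in\proP{G}$ satisfies $xgx^{-1}=h$; the aim is to produce such a conjugator inside $G$. I would split into two cases according to whether $g$ acts elliptically or hyperbolically on $T$, noting that $h$ must act in the same way. For the elliptic case, after a $G$-conjugation I may assume $g\in G_v$ for some vertex $v$ of $X$. The fixed-point set of $g$ in $T$ is a pro-$p$ subtree of diameter at most $2$ by 2-acylindricity, and $xv$ lies in the fixed-point set of $h$. Using hypothesis (1) to control double cosets of vertex stabilisers along the path from $v$ to $xv$, one can modify $x$ by an element of $G$ so that the remaining correction lies in $\proP{G_v}$; combined with hypothesis (3) to reconcile the pro-$p$ topologies on the incident edge groups, this reduces the problem to conjugating two elements of $G_v$ that are conjugate in $\proP{G_v}$, which is resolved by the assumption that $G_v$ is itself conjugacy $p$-separable.

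For the hyperbolic case, $g$ and $h$ have axes in $T$ and $x$ must translate one onto the other. Writing $g$ and $h$ as cyclically reduced words in the graph-of-groups structure and tracking them along their axes, I would use hypothesis (2) on the edge stabilisers of segments of the axis and hypothesis (3) to identify intersections of adjacent vertex stabilisers. 2-acylindricity trivialises the stabiliser of any long enough segment, so $x$ is determined by its action on a bounded initial portion of the axis up to a short-segment stabiliser; this ambiguity can then be absorbed into a genuine $G$-conjugation, producing a $y\in G$ with $ygy^{-1}=h$ and contradicting the assumption that $g$ and $h$ were non-conjugate in $G$.

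The main obstacle I anticipate is the hyperbolic case: pro-$p$ trees lack some of the combinatorial flexibility of abstract Bass--Serre trees, and translating axis-based conjugacy arguments into the pro-$p$ world requires delicate manipulation of cyclically reduced forms together with careful use of hypothesis (3) to compute closures of edge stabilisers correctly. The 2-acylindricity assumption is what keeps the argument tractable, by bounding how much two distinct conjugate axes can coincide before the pointwise stabiliser of their overlap vanishes.
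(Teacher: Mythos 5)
Your proposal follows essentially the same route as the paper: the paper gives no independent argument but states that the proof is in all respects a repetition of the Wilton--Zalesskii combination theorem from \cite{WZ10}, and your outline (reduce to showing non-conjugate elements stay non-conjugate in $\proP{G}$, then treat elliptic and hyperbolic elements on the standard pro-$p$ tree using acylindricity and hypotheses (1)--(3) to correct the conjugator) is precisely that argument transplanted to the pro-$p$ setting.
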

The proof is in all respects a repetition of the argument in \cite{WZ10}, and we shall not reproduce it here. The difficulty lies in applying Theorem \ref{CpScombination} in the absence of any sledgehammer properties such as subgroup separability or double coset separability in the pro-$p$ world. Instead we must verify these properties for the specific cases involved in a particular application, and resist the temptation to attempt to prove too broad a result.

As an immediate consequence, when all the conditions on edge groups are trivial, we have:
\begin{clly}\label{freeprodCpS}
A free product of conjugacy $p$-separable groups is conjugacy $p$-separable.
\end{clly}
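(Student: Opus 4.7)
The plan is to realise the free product $G = G_1 \ast \cdots \ast G_n$ of conjugacy $p$-separable groups as the fundamental group of a graph of groups $\cal G$ whose underlying graph $X$ is a tree (a star, say) with vertex groups $G_{v_i} = G_i$ and all edge groups trivial, and then invoke Theorem \ref{CpScombination}.

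The first step is to observe that conjugacy $p$-separability implies residual $p$-ness, applied to the conjugacy class of the identity. Hence each $G_i$ is residually $p$, and a free product of residually $p$ groups is residually $p$. As noted in the preliminaries just after the definition of $p$-efficiency, a free product decomposition of a residually $p$ group is automatically $p$-efficient: each factor is $p$-separable in $G$ (by the argument of Proposition \ref{bdycptssep}) and inherits its full pro-$p$ topology from $G$.

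The second step is to verify the acylindricity and the conditions (1)--(3) of Theorem \ref{CpScombination}. Since every edge group $G_e$ is trivial, every edge stabiliser for the action of $\proP{G}$ on the standard tree of $\proP{\cal G}$ is trivial, so the action is $0$-acylindrical and in particular $2$-acylindrical. For conditions (1)--(3), with $G_e = G_f = 1$ the double coset $G_e g G_f$ reduces to the singleton $\{g\}$, which is $p$-separable in the residually $p$ group $G_v$; the trivial subgroup is conjugacy $p$-distinguished in $G_v$, since $\{1\}$ is closed in the pro-$p$ topology; and $\bar G_e \cap \bar G_f$ together with $\proP{G_e \cap G_f}$ are both the trivial pro-$p$ group.

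There is essentially no obstacle: every hypothesis of the combination theorem either holds vacuously or reduces to the residual $p$-ness that is built into the assumption of conjugacy $p$-separability of the factors. An application of Theorem \ref{CpScombination} then yields the conclusion.
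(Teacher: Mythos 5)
Your proposal is correct and is essentially the paper's own argument: the corollary is stated there as an immediate consequence of Theorem \ref{CpScombination} in the case where all edge groups are trivial, which is exactly the verification you carry out. Your extra details (residual $p$-ness of the factors and of the free product, $0$-acylindricity, and the trivialisation of conditions (1)--(3)) are all accurate and simply make explicit what the paper leaves implicit.
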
 

We now prove a series of lemmas directed towards showing that the conditions of Theorem \ref{CpScombination} hold in the cases of Fuchsian groups and $p$-efficient graph manifolds. Many of the lemmas follow closely the analogous results for the profinite topology; where this is wholly or partly the case the result will be cited in brackets. 

In \cite{Nib92} Niblo uses the following `doubling trick' to deduce double-coset separability. The proof works just as well for the pro-$p$ topology, so we will use it to check condition 1 of Theorem \ref{CpScombination}.
\begin{theorem}[Niblo \cite{Nib92}]\label{doublingtrick}
Let $K,L$ be subgroups of $G$. Let $\tau$ denote the involution which swaps the two factors of $G\ast_L G$. If \gp{K,K^\tau} is $p$-separable in $G\ast_L G$ then the double coset $LK$ is $p$-separable in $G$.
\end{theorem}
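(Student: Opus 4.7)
The plan is to translate the question about the double coset $LK$ in $G$ into a question about the subgroup $H=\langle K,K^\tau\rangle$ in the double $D=G\ast_L G$, exploiting the swap involution $\tau$. I would first identify $G$ with the first factor $G_1\subseteq D$, so that $K$ embeds as $K_1\leq G_1$ and its $\tau$-image is $K_2\leq G_2$, and record that $\tau$ fixes the amalgamated subgroup $L$ pointwise.

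The heart of the argument is a combinatorial equivalence: for $g\in G$, the condition $g\in LK$ is equivalent to $g^{-1}g^\tau\in H$. The forward direction is immediate from the calculation $(lk)^{-1}(lk)^\tau=k^{-1}l^{-1}l^\tau k^\tau=k^{-1}k^\tau\in H$, using $l^\tau=l$. For the reverse direction, given $g^\tau=gh$ with $h\in H$, I would write $h$ as an alternating word $h_1 h_2\cdots h_n$ whose consecutive letters lie in opposing factors modulo $L$, and induct on $n$: the equation $gh=g^\tau\in G_2$ in the amalgamated product forces cancellation at the leftmost letter, and a Bass--Serre normal form analysis then delivers a decomposition $g=lk$ with $l\in L$ and $k\in K$. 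I expect this normal-form argument to be the main technical step, though it is essentially routine case analysis driven by the fact that a reduced alternating word lies in a single factor only when its length is $1$.

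Granting the equivalence, the conclusion follows quickly. For $g\in G\smallsetminus LK$, the element $g^{-1}g^\tau$ lies in $D\smallsetminus H$, so the hypothesis supplies a $p$-group quotient $\phi\colon D\twoheadrightarrow P$ with $\phi(g^{-1}g^\tau)\notin\phi(H)$. Set $\psi_i=\phi\circ\iota_i\colon G\to P$ for the two inclusions $\iota_i\colon G\to D$ and form the diagonal $\Psi=(\psi_1,\psi_2)\colon G\to P\times P$; its image $Q=\Psi(G)$ is a finite $p$-group, giving a $p$-group quotient of $G$. Any $lk\in LK$ satisfies $\Psi(lk)=(\psi_1(l)\psi_1(k),\,\psi_1(l)\psi_2(k))$, so the equality $\Psi(g)=\Psi(lk)$ would force $\psi_1(g)^{-1}\psi_2(g)=\psi_1(k)^{-1}\psi_2(k)\in\phi(H)$, contradicting the choice of $\phi$. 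Hence $\Psi$ separates $g$ from $LK$ in $Q$, and $LK$ is $p$-separable in $G$.
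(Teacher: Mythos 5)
Your proof is correct and is essentially the argument of Niblo's Theorem 3.2, which is all the paper itself supplies (its proof is the single line ``identical with the proof of Niblo, Theorem 3.2''): the equivalence $g\in LK \iff g^{-1}g^{\tau}\in\langle K,K^{\tau}\rangle$, followed by the diagonal map $G\to P\times P$. The only step worth making fully explicit is the reverse implication, where uniqueness of reduced length in $G\ast_L G$ forces the alternating $K$--$K^{\tau}$ word representing $g^{-1}g^{\tau}$ to have length exactly $2$ (or $0$, when $g\in L$), from which $g\in LK$ follows at once.
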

\begin{proof}
Identical with the proof of \cite{Nib92}, Theorem 3.2.
\end{proof}
\begin{lem}
Let $\Sigma$ be a orientable surface, $G=\pi_1 \Sigma$ and let $D_1$, $D_2$ be maximal peripheral subgroups of $G$. Then the double coset $D_1 D_2$ is $p$-separable in $G$.
\end{lem}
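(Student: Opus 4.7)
My plan is to apply Niblo's doubling trick (Theorem \ref{doublingtrick}). Set $L = D_1$ and $K = D_2$; the amalgam $G \ast_{D_1} G$ is naturally identified with $\pi_1 \Sigma'$, where $\Sigma' = \Sigma \cup_{\partial_1} \Sigma$ is the double of $\Sigma$ along the boundary component $\partial_1$ carrying $D_1$, and $\tau$ is the involution exchanging the two copies of $\Sigma$. The trick reduces the claim to showing that the subgroup $\langle D_2, D_2^\tau \rangle$ is $p$-separable in $\pi_1 \Sigma'$.

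If $D_1 = D_2$ then $D_1 D_2 = D_1$ is already $p$-separable in $G$ by Proposition \ref{bdycptssep}, so assume $D_1 \neq D_2$, and indeed that they correspond to distinct boundary components $\partial_1, \partial_2$ of $\Sigma$. After doubling along $\partial_1$, the curves $\partial_2$ and $\partial_2^\tau$ appear as two disjoint boundary components of $\Sigma'$. Let $\alpha$ be an embedded arc in $\Sigma'$ joining $\partial_2$ to $\partial_2^\tau$, and let $N \subseteq \Sigma'$ be a regular neighbourhood of $\partial_2 \cup \alpha \cup \partial_2^\tau$. Then $N$ is a pair of pants and, with compatible basepoint and path conventions, $\pi_1 N = \langle D_2, D_2^\tau \rangle$ inside $\pi_1 \Sigma'$. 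The third boundary circle $c$ of $N$ is a simple closed curve in $\Sigma'$; in all non-trivial configurations it is essential and not boundary-parallel, since failure would force $\Sigma' \smallsetminus N$ to be a disc or annulus and thereby $\Sigma$ to be of very low complexity.

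Proposition \ref{surfacesplittingamalg} then gives a $p$-efficient amalgam
\[\pi_1 \Sigma' = \pi_1 N \ast_{\pi_1 c} \pi_1(\Sigma' \smallsetminus N);\]
in particular $\pi_1 N = \langle D_2, D_2^\tau \rangle$ is $p$-separable in $\pi_1 \Sigma'$, and the doubling trick delivers the desired conclusion that $D_1 D_2$ is $p$-separable in $G$. The remaining cases---where $D_1, D_2$ are distinct but conjugate in $G$, or where $\Sigma$ is of low complexity (e.g.\ an annulus or a pair of pants)---can be handled either by translating to reduce to the case $D_1 = D_2$, or by running an analogous subsurface argument on a regular neighbourhood of two parallel copies of the relevant doubled curve joined by an arc.

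I expect the main subtlety to be the topological bookkeeping: one must verify that the pair-of-pants subsurface $N$ has fundamental group equal to $\langle D_2, D_2^\tau \rangle$, not merely containing it up to conjugacy, and check essentiality of $c$ in all configurations. These are not deep difficulties, but require care in how basepoints and path choices in $\Sigma'$ are made compatible with those on $\Sigma$ and its $\tau$-image.
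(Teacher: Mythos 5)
Your overall strategy is the paper's: double along $\partial_1$, identify $\langle D_2, D_2^\tau\rangle$ with the fundamental group of a neighbourhood of $\partial_2\cup(\text{arc})\cup\partial_2^\tau$, apply Proposition \ref{surfacesplittingamalg} and then Niblo's trick. But there is a genuine gap, and it sits exactly in what you dismiss as ``topological bookkeeping.'' The hypothesis allows $D_1, D_2$ to be \emph{arbitrary} maximal peripheral subgroups, i.e.\ arbitrary conjugates $w\,\pi_1\partial_2\,w^{-1}$; the conjugating element corresponds to a path $\gamma$ from the basepoint to $\partial_2$ which is in general only \emph{immersed}. Your identification $\pi_1 N=\langle D_2,D_2^\tau\rangle$ holds only when that defining path can be taken embedded, since $N$ must be a regular neighbourhood of $\partial_2\cup\gamma\cup\gamma^\tau\cup\partial_2^\tau$. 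You cannot repair this by replacing $D_2$ with a conveniently embedded conjugate: changing $D_2$ to $gD_2g^{-1}$ turns $D_1D_2$ into the twisted double coset $D_1gD_2$ (a different set), and turns $\langle D_2,D_2^\tau\rangle$ into $\langle gD_2g^{-1},g^\tau D_2^\tau(g^\tau)^{-1}\rangle$, which is not conjugate to the subgroup Niblo's trick actually requires unless $g=g^\tau$. The paper's proof exists precisely to close this gap: it first passes to a regular $p$-power cover $\tilde\Sigma$ (available since $\pi_1\Sigma$ is residually $p$) in which the immersed arc $\gamma$ lifts to an embedded arc, runs your argument there for $H=\pi_1\tilde\Sigma$, and then recovers $p$-separability of $D_1D_2$ in $G$ from that of $(D_1\cap H)(D_2\cap H)$ in $H$ because the former is a finite union of translates of the latter. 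You need this covering step.

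The deferred case where $D_1\neq D_2$ but both cover the same boundary component is also not handled correctly: it does \emph{not} reduce to $D_1=D_2$, since if $D_2=wD_1w^{-1}$ then $D_1D_2=D_1wD_1w^{-1}$, whose separability is that of the nontrivial double coset $D_1wD_1$, not of $D_1$. The paper deals with this by choosing the cover $\tilde\Sigma$ so that $\gamma$ is not congruent to any element of $D_1$ modulo $H$, which forces $\tilde\partial_1\neq\tilde\partial_2$ upstairs and returns you to the distinct-boundary-components situation. Your essentiality check on the curve $c$ and the use of Proposition \ref{surfacesplittingamalg} are fine once these two points are fixed.
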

\begin{proof}
By Proposition \ref{bdycptssep} we may assume $D_1\neq D_2$. Suppose that $D_1$, $D_2$ arise from boundary components $\bdy_1$, $\bdy_2$ of $\Sigma$ (possibly $\bdy_1=\bdy_2$). Choose a basepoint $x$ on $\bdy_1$; performing a conjugation we may assume that $D_1$ is generated by the homotopy class of the loop running around $\bdy_1$ based at $x$. Choose an immersed arc $\gamma$ joining $x$ to a point on $\bdy_2$ such that $D_2$ is generated by the homotopy class of the loop based at $x$ which runs along $\gamma$ to $\bdy_2$, once around $\bdy_2$, then back to $x$ along $\gamma$. In the case that $\bdy_2=\bdy_1$ choose $\gamma$ to be a loop based at $x$.

The finitely many self-intersections of $\gamma$ with itself give a finite collection of unbased loops in $\Sigma$; pass to a regular $p$-power degree cover $\pi:\tilde\Sigma\to\Sigma$ so that none of these loops lifts; such a cover exists since $\pi_1\Sigma$ is residually $p$. Furthermore, in the case when $\bdy_1=\bdy_2$, we can use the $p$-separability of $D_1=\pi_1(\bdy_1,x)$ to choose $\tilde\Sigma$ such that $\gamma$ is not congruent to any element of $D_1$ modulo $\pi_1 \tilde\Sigma$. Let $H\nsgp[p] G$ be the corresponding subgroup of $G$. Choose a lift $\tilde x$ of $x$ to $\tilde\Sigma$ to serve as a new basepoint. Then by construction $\gamma$ lifts to an {\em embedded} arc $\tilde\gamma$ in $\tilde\Sigma$ starting at $\tilde x$. Let the component of $\pi^{-1}(\bdy_1)$ containing $\tilde x$ be denoted $\tilde\bdy_1$, and the component of $\pi^{-1}(\bdy_2)$ containing the other endpoint of $\tilde\gamma$ be $\tilde\bdy_2$. Note that $\tilde\bdy_1\neq\tilde\bdy_2$ since if $\gamma$ is a loop, $\tilde\Sigma$ was constructed so that $\gamma$ does not lift either to a loop or to an arc with both endpoints on $\tilde\bdy_1$ (since such a lift would imply that $\gamma$ with congruent to $D_1$ modulo $H$). Then $D_1\cap H$ is generated by the loop $\tilde\bdy_1$ based at $\tilde x$, and $D_2\cap H$ is generated by the homotopy class of the loop based at $\tilde x$ which runs along $\tilde\gamma$ to $\tilde\bdy_2$, once around $\tilde\bdy_2$, then back to $\tilde x$ along $\tilde\gamma$.

\begin{figure}[htp]
\centering
\begin{tikzpicture}[y=0.80pt, x=0.80pt, yscale=-1.000000, xscale=1.000000, inner sep=0pt, outer sep=0pt, scale=0.6]
\begin{scope}

  \path[draw=black,line join=miter,line cap=butt,thick]
    (387.7978,702.0095) .. controls (368.9649,698.1160) and (354.5571,689.1684) ..
    (355.4834,681.8693) .. controls (357.3734,666.2128) and (327.4709,640.5457) ..
    (321.1522,639.4868) .. controls (317.6853,638.3515) and (316.6873,622.7807) ..
    (318.7282,606.6215) .. controls (320.7691,590.4623) and (324.3385,580.3661) ..
    (328.3754,580.7432) .. controls (337.3415,581.4656) and (367.7462,557.1004) ..
    (367.9817,542.6359) .. controls (367.2055,537.9855) and (379.1658,531.5223) ..
    (394.7821,528.3293) .. controls (410.3983,525.1364) and (423.5414,526.4860) ..
    (424.5484,530.9458) .. controls (429.2991,544.9599) and (479.9018,541.9645) ..
    (481.9089,531.8525) .. controls (482.8300,526.9870) and (495.5877,525.8237) ..
    (510.4902,529.1332) .. controls (525.3927,532.4427) and (536.7968,538.9717) ..
    (536.0532,543.6434) .. controls (536.0906,557.3238) and (568.4686,578.4748) ..
    (585.7004,580.8693) .. controls (588.9878,581.4911) and (590.0564,597.6730) ..
    (588.0155,613.8322) .. controls (585.9746,629.9914) and (581.8665,640.3245) ..
    (578.9028,639.6285) .. controls (569.3826,637.6648) and (543.8037,659.5320) ..
    (547.6575,676.4509) .. controls (549.7712,683.9203) and (535.0816,694.1682) ..
    (516.3749,699.1698) .. controls (497.6682,704.1713) and (482.0271,701.9832) ..
    (479.9347,695.8021) .. controls (473.9315,683.3953) and (427.7400,684.0906) ..
    (424.0860,696.0080) .. controls (422.6976,703.3001) and (406.6933,705.9160) ..
    (387.7978,702.0095);

  \path[draw=black,line join=miter,line cap=butt,thick]
    (355.4892,681.8691) .. controls (356.5709,674.6806) and (372.8195,672.0714) ..
    (391.7150,675.9779) .. controls (410.6105,679.8844) and (425.1657,688.8339) ..
    (424.0841,696.0224);

  \path[draw=black,line join=miter,line cap=butt,thick]
    (479.9419,695.8138) .. controls (478.5570,688.6971) and (492.6529,678.3998) ..
    (511.3596,673.3982) .. controls (530.0663,668.3967) and (545.9538,669.9326) ..
    (547.6435,676.4911);

  \path[draw=black,line join=miter,line cap=butt,thick]
    (578.8899,639.6278) .. controls (574.8224,638.7575) and (574.0366,622.6860) ..
    (576.0774,606.5268) .. controls (578.1183,590.3676) and (581.7821,580.0504) ..
    (585.6745,580.8733);

  \path[draw=black,dash pattern=on 1.48pt off 1.48pt,line join=miter,line
    cap=butt,miter limit=4.00,thick] (536.0477,543.6604) .. controls
    (535.2129,548.4049) and (522.2837,549.7580) .. (507.3812,546.4485) .. controls
    (492.4787,543.1390) and (481.0746,536.6100) .. (481.9094,531.8655);

  \path[draw=black,dash pattern=on 1.48pt off 1.48pt,line join=miter,line
    cap=butt,miter limit=4.00,thick] (424.5432,530.9349) .. controls
    (425.4056,535.7147) and (413.4262,542.2673) .. (397.8100,545.4602) .. controls
    (382.1937,548.6531) and (368.8351,547.3667) .. (367.9727,542.5868);

  \path[draw=black,dash pattern=on 1.48pt off 1.48pt,line join=miter,line
    cap=butt,miter limit=4.00,thick] (328.3791,580.7470) .. controls
    (332.1772,582.0120) and (332.5725,597.6730) .. (330.5225,613.8468) .. controls
    (328.4725,630.0207) and (324.5664,640.6852) .. (321.1588,639.4834);

  \path[draw=black,line join=miter,line cap=butt,miter limit=4.00,thick] 
    (415.7354,599.7409) .. controls (435.4634,622.2289) and
    (472.5700,621.3245) .. (492.0335,598.5632);

  \path[draw=black,line join=miter,line cap=butt,miter limit=4.00,thick] 
    (423.6719,606.8880) .. controls (428.2447,600.7149) and
    (440.3089,596.1504) .. (452.9684,595.9847) .. controls (465.2885,595.8234) and
    (478.1884,599.8945) .. (484.1408,605.9644);

  \path[draw=black,line join=miter,line cap=butt,miter limit=4.00,thick] 
    (190.6577,541.7688) .. controls (195.5588,539.5618) and
    (203.9298,549.1476) .. (209.1782,560.1618) .. controls (214.6000,571.5400) and
    (216.7501,584.4314) .. (212.1544,586.7118) .. controls (196.9264,597.0623) and
    (196.7393,625.0267) .. (210.0125,634.9563) .. controls (215.5149,637.0262) and
    (215.1678,649.3988) .. (210.2207,659.0013) .. controls (204.4655,670.1721) and
    (193.3978,679.3185) .. (189.4139,676.0166) .. controls (185.0285,671.4666) and
    (167.1011,657.6105) .. (141.2985,653.9239) .. controls (114.4983,650.0947) and
    (86.7848,679.9549) .. (44.3309,659.5828) .. controls (1.0592,638.8182) and
    (4.6324,569.0019) .. (46.6871,552.7939) .. controls (91.0981,535.6777) and
    (113.9025,566.0695) .. (146.9438,560.2740) .. controls (166.8220,556.7874) and
    (181.2338,548.0802) .. (190.6577,541.7688) -- cycle;

  \path[draw=black,line join=miter,line cap=butt,miter limit=4.00,thick] 
    (190.9607,541.7954) .. controls (187.3899,543.2411) and
    (187.8611,555.6205) .. (192.9955,566.6419)node[xshift=14,yshift=11]{$\partial_2$} 
    .. controls (198.1563,577.7198) and
    (206.9186,588.2630) .. (212.3439,586.5568);

  \path[draw=black,line join=miter,line cap=butt,miter limit=4.00,thick] 
    (209.9479,634.9402) .. controls (205.0615,633.5834) and
    (197.2174,641.6226) .. (192.5532,651.3160) .. controls (187.5787,661.6540) and
    (185.7906,673.5153) .. (189.4971,676.0166);

  \path[draw=black,line join=miter,line cap=butt,thick,->,>=stealth]
    (288.8009,609.4824) -- (233.8984,609.4824) node[pos=0.5,yshift=6]{$\pi$};

  \path[draw=black,line join=miter,line cap=butt,thick]
    (394.8538,528.0387) node[xshift=-2,yshift=9]{$\widetilde\partial_2$} 
    .. controls (426.7117,570.1459) and (457.1216,558.6336) ..
    (485.0518,573.9801) .. controls (518.8076,592.5276) and (511.3677,632.1969) ..
    (521.4851,671.2796) node[circle, minimum size=4pt, fill=black]{} node[xshift=3,yshift=-6]{$x$}
    node[xshift=17,yshift=-16]{$\widetilde\partial_1$};

  \path[draw=black,line join=miter,line cap=butt,miter limit=4.00,thick] 
    (192.9718,650.1373) node[circle, minimum size=4pt, fill=black]{} node[xshift=0,yshift=7]{$x$} 
    node[xshift=18,yshift=-6]{$\partial_1$} 
    .. controls (178.7909,635.0549) and
    (154.1489,596.7294) .. (130.4332,581.4992) .. controls (101.3679,562.8334) and
    (39.8608,571.4272) .. (38.9027,600.3852) .. controls (37.3455,647.4468) and
    (105.1366,640.9580) .. (125.9012,627.5244)node[xshift=0,yshift=-7]{$\gamma$} 
    .. controls (164.3512,602.6492) and
    (183.8033,583.7721) .. (195.5659,571.9662);
    
  \path[draw=black,line join=miter,line cap=butt,miter limit=4.00,thick] 
    (58.1414,599.7409) .. controls (77.8694,622.2289) and
    (114.9759,621.3245) .. (134.4395,598.5632);

  \path[draw=black,line join=miter,line cap=butt,miter limit=4.00,thick] 
    (66.0779,606.8880) .. controls (70.6507,600.7149) and
    (82.7149,596.1504) .. (95.3744,595.9847) .. controls (107.6945,595.8234) and
    (120.5944,599.8945) .. (126.5468,605.9644);

\end{scope}

\end{tikzpicture}
\end{figure}
Note that $p$-separability of $(D_1\cap H)(D_2\cap H)$ in $H$ implies $p$-separability of $D_1 D_2$ in $G$; for the latter double coset is the union of finitely many translates of the former. We may now apply the `doubling trick'. Glue two copies $\tilde\Sigma$, $\tilde\Sigma^\tau$ of $\tilde\Sigma$ along $\tilde\bdy_1$ to obtain a surface $F$. The subgroup \gp{(D_2\cap H), (D_2\cap H)^\tau} of $\pi_1(F,\tilde x)= H\ast_{D_1\cap H} H$ is now the fundamental group of a certain subsurface $F'$ of $F$ whose boundary is an essential curve in $F$; specifically, take $F'$ to be a regular neighbourhood 
\[ {\cal N}(\gamma\cup \tilde\bdy_2\cup\gamma^\tau\cup \tilde\bdy_2^\tau)\]
Now $\gp{(D_2\cap H), (D_2\cap H)^\tau}=\pi_1(F',\tilde x)$ is $p$-separable in $H$ by Proposition \ref{surfacesplittingamalg}, so by Theorem \ref{doublingtrick} the double coset $(D_1\cap H)(D_2\cap H)$ is $p$-separable in $H$ and the proof is complete.

\begin{figure}[htp]
\centering
\begin{tikzpicture}[y=0.80pt, x=0.80pt, yscale=-1.000000, xscale=1.000000, inner sep=0pt, outer sep=0pt, scale=0.6]
\begin{scope}

  \path[draw=black,line join=miter,line cap=butt,thick]
    (326.8366,914.0603) .. controls (327.9183,906.8719) and (344.1669,904.2626) ..
    (363.0625,908.1691) .. controls (381.9580,912.0756) and (396.5132,921.0252) ..
    (395.4315,928.2136);

  \path[draw=black,line join=miter,line cap=butt,thick]
    (451.2894,928.0050) .. controls (449.9044,920.8884) and (464.0004,910.5911) ..
    (482.7071,905.5895) .. controls (501.4138,900.5879) and (517.3013,902.1239) ..
    (518.9910,908.6824);

  \path[draw=black,line join=miter,line cap=butt,thick]
    (550.2374,871.8190) .. controls (546.1698,870.9487) and (545.3840,854.8773) ..
    (547.4249,838.7181) .. controls (549.4658,822.5589) and (553.1295,812.2416) ..
    (557.0219,813.0646);

  \path[draw=black,dash pattern=on 1.48pt off 1.48pt,line join=miter,line
    cap=butt,miter limit=4.00,thick] (507.3952,775.8517) .. controls
    (506.5603,780.5962) and (493.6312,781.9493) .. (478.7287,778.6398) .. controls
    (463.8261,775.3303) and (452.4220,768.8013) .. (453.2569,764.0568);

  \path[draw=black,dash pattern=on 1.48pt off 1.48pt,line join=miter,line
    cap=butt,miter limit=4.00,thick] (395.8906,763.1261) .. controls
    (396.7530,767.9060) and (384.7737,774.4585) .. (369.1574,777.6515) .. controls
    (353.5412,780.8444) and (340.1826,779.5579) .. (339.3202,774.7780);

  \path[draw=black,dash pattern=on 1.48pt off 1.48pt,line join=miter,line
    cap=butt,miter limit=4.00,thick] (299.7265,812.9383) .. controls
    (303.5247,814.2032) and (303.9199,829.8642) .. (301.8700,846.0381) .. controls
    (299.8200,862.2120) and (295.9138,872.8765) .. (292.5063,871.6747);

  \path[draw=black,line join=miter,line cap=butt,miter limit=4.00,thick] (387.0829,831.9321) .. controls (406.8109,854.4201) and
    (443.9174,853.5158) .. (463.3809,830.7545);

  \path[draw=black,line join=miter,line cap=butt,miter limit=4.00,thick] (395.0194,839.0792) .. controls (399.5922,832.9061) and
    (411.6564,828.3416) .. (424.3158,828.1759) .. controls (436.6359,828.0147) and
    (449.5359,832.0857) .. (455.4883,838.1557);

  \path[draw=black,line join=miter,line cap=butt,thick]
    (102.2804,934.2008) .. controls (83.4475,930.3072) and (69.0397,921.3596) ..
    (69.9660,914.0606) .. controls (71.8559,898.4040) and (41.9534,872.7370) ..
    (35.6348,871.6781) .. controls (32.1679,870.5427) and (31.1699,854.9720) ..
    (33.2108,838.8128) .. controls (35.2517,822.6536) and (38.8211,812.5574) ..
    (42.8579,812.9345)node[xshift=-7,pos=0.1]{$\partial_2$} .. controls (51.8240,813.6568) and (82.2288,789.2916) ..
    (82.4643,774.8272) .. controls (81.6881,770.1768) and (93.6484,763.7135) ..
    (109.2647,760.5206) .. controls (124.8809,757.3277) and (138.0240,758.6773) ..
    (139.0310,763.1371) .. controls (143.7817,777.1511) and (194.3844,774.1558) ..
    (196.3915,764.0437)node[pos=0.5,yshift=12]{$\widetilde\Sigma$} .. controls (197.3126,759.1783) and (210.0703,758.0150) ..
    (224.9728,761.3245) .. controls (239.8753,764.6339) and (251.2794,771.1630) ..
    (250.5358,775.8347) .. controls (250.5547,782.7737) and (257.8580,793.6973) ..
    (268.7429,801.4381) .. controls (279.3177,808.9585) and (291.3546,813.7451) ..
    (299.9529,812.9975) .. controls (308.5512,812.2499) and (318.3984,808.5205) ..
    (325.5442,800.7407) .. controls (333.3730,792.2174) and (339.2162,781.7660) ..
    (339.3291,774.8272) .. controls (338.5530,770.1767) and (350.5133,763.7135) ..
    (366.1296,760.5206) .. controls (381.7458,757.3277) and (394.8889,758.6773) ..
    (395.8959,763.1370) .. controls (400.6466,777.1511) and (451.2493,774.1558) ..
    (453.2564,764.0437)node[pos=0.5,yshift=12]{$\widetilde\Sigma^\tau$} .. controls (454.1775,759.1783) and (466.9351,758.0150) ..
    (481.8377,761.3245) .. controls (496.7402,764.6339) and (508.1443,771.1630) ..
    (507.4007,775.8347) .. controls (507.4381,789.5151) and (539.8161,810.6660) ..
    (557.0479,813.0606) .. controls (560.3352,813.6823) and (561.4039,829.8643) ..
    (559.3630,846.0235) .. controls (557.3221,862.1827) and (553.2139,872.5157) ..
    (550.2503,871.8197) .. controls (540.7301,869.8560) and (515.1511,891.7232) ..
    (519.0050,908.6421) .. controls (521.1187,916.1115) and (506.4290,926.3594) ..
    (487.7223,931.3610) .. controls (469.0156,936.3626) and (453.3746,934.1744) ..
    (451.2822,927.9934) .. controls (445.2790,915.5866) and (399.0875,916.2818) ..
    (395.4335,928.1992) .. controls (394.0451,935.4914) and (378.0408,938.1073) ..
    (359.1453,934.2008)(359.1453,934.2008) .. controls (340.3124,930.3072) and
    (325.9046,921.3596) .. (326.8308,914.0606) .. controls (327.8254,905.8215) and
    (323.5378,894.1455) .. (316.1791,885.4034) .. controls (309.5543,877.5330) and
    (300.6907,871.5826) .. (292.9425,871.7489) .. controls (285.1944,871.9152) and
    (281.3155,873.2974) .. (274.4981,879.6281) .. controls (266.6326,886.9321) and
    (260.4220,899.5112) .. (262.1401,908.6421) .. controls (264.2538,916.1115) and
    (249.5642,926.3594) .. (230.8575,931.3610) .. controls (212.1508,936.3626) and
    (196.5097,934.1744) .. (194.4173,927.9934) .. controls (188.4141,915.5866) and
    (142.2226,916.2818) .. (138.5686,928.1992) .. controls (137.1802,935.4914) and
    (121.1759,938.1073) .. (102.2804,934.2008);

  \path[draw=black,line join=miter,line cap=butt,thick]
    (69.9717,914.0603) .. controls (71.0534,906.8719) and (87.3020,904.2626) ..
    (106.1976,908.1691) .. controls (125.0931,912.0756) and (139.6483,921.0252) ..
    (138.5666,928.2136);

  \path[draw=black,line join=miter,line cap=butt,thick]
    (194.4245,928.0050) .. controls (193.0396,920.8884) and (207.1355,910.5911) ..
    (225.8422,905.5895) .. controls (244.5489,900.5879) and (260.4364,902.1239) ..
    (262.1261,908.6824);

  \path[draw=black,line join=miter,line cap=butt,thick]
    (293.3725,871.8190) .. controls (289.3050,870.9487) and (288.5191,854.8773) ..
    (290.5600,838.7181) .. controls (292.6009,822.5589) and (296.2646,812.2416) ..
    (300.1571,813.0646);

  \path[draw=black,dash pattern=on 1.48pt off 1.48pt,line join=miter,line
    cap=butt,miter limit=4.00,thick] (250.5303,775.8517) .. controls
    (249.6954,780.5962) and (236.7663,781.9493) .. (221.8638,778.6398) .. controls
    (206.9613,775.3303) and (195.5572,768.8013) .. (196.3920,764.0568);

  \path[draw=black,dash pattern=on 1.48pt off 1.48pt,line join=miter,line
    cap=butt,miter limit=4.00,thick] (139.0257,763.1261) .. controls
    (139.8882,767.9060) and (127.9088,774.4585) .. (112.2926,777.6515) .. controls
    (96.6763,780.8444) and (83.3177,779.5579) .. (82.4553,774.7780);

  \path[draw=black,dash pattern=on 1.48pt off 1.48pt,line join=miter,line
    cap=butt,miter limit=4.00,thick] (42.8617,812.9383) .. controls
    (46.6598,814.2032) and (47.0550,829.8642) .. (45.0051,846.0381) .. controls
    (42.9551,862.2120) and (39.0490,872.8765) .. (35.6414,871.6747);

  \path[draw=black,line join=miter,line cap=butt,miter limit=4.00,thick] (130.2180,831.9321) .. controls (149.9460,854.4201) and
    (187.0525,853.5158) .. (206.5161,830.7545);

  \path[draw=black,line join=miter,line cap=butt,miter limit=4.00,thick] (138.1545,839.0792) .. controls (142.7273,832.9061) and
    (154.7915,828.3416) .. (167.4510,828.1759) .. controls (179.7711,828.0147) and
    (192.6710,832.0857) .. (198.6234,838.1557);

  \path[draw=black,line join=miter,line cap=butt,thick]
    (33.1001,839.2401) .. controls (52.6134,840.7482) and (117.0903,801.5403) ..
    (163.7878,802.0043) .. controls (210.4853,802.4682) and (244.2577,835.1885) ..
    (290.4841,837.2194) node[xshift=0,yshift=-14,pos=0.55]{$F'$}
    .. controls (336.7104,839.2503) and (375.8332,800.0778) ..
    (427.9629,802.9201) .. controls (480.0927,805.7624) and (532.3576,836.6075) ..
    (547.3631,838.8689) node[xshift=14,yshift=4]{$\partial_2^\tau$};

  \path[draw=black,
	pattern=north west lines,
        line join=miter,line cap=butt,thick] (557.0349,813.0626) .. controls
    (553.1295,812.2417) and (549.4658,822.5589) .. (547.4249,838.7181) .. controls
    (545.3840,854.8773) and (546.1698,870.9487) .. (550.2438,871.8194) .. controls
    (547.2233,871.1954) and (542.5729,872.9801) .. (537.8372,876.1773) .. controls
    (534.2638,875.7549) and (532.1050,861.4507) .. (532.6649,847.4229) .. controls
    (509.9760,838.2439) and (468.8015,818.3737) .. (427.4242,816.1177) .. controls
    (386.0469,813.8616) and (336.1717,852.4479) .. (289.9453,850.4170) .. controls
    (243.7190,848.3860) and (201.2298,815.5792) .. (163.2490,815.2018) .. controls
    (125.2683,814.8244) and (75.2994,840.6906) .. (47.3450,849.6207) .. controls
    (46.3665,863.6732) and (46.9929,878.2434) .. (48.8719,880.6603) .. controls
    (43.0075,875.4741) and (37.8594,872.0509) .. (35.6348,871.6781) .. controls
    (32.1679,870.5427) and (31.1699,854.9720) .. (33.2108,838.8128) .. controls
    (35.2517,822.6536) and (38.8211,812.5573) .. (42.8579,812.9344) .. controls
    (45.9423,813.1829) and (51.5637,810.4626) .. (57.7849,806.0648) .. controls
    (55.3178,807.4758) and (52.7403,814.6347) .. (50.6578,824.1408) .. controls
    (78.4355,815.3630) and (128.3972,789.5028) .. (166.3736,789.8802) .. controls
    (204.3501,790.2575) and (246.8436,823.0644) .. (293.0700,825.0953) .. controls
    (339.2963,827.1262) and (388.8258,788.5211) .. (430.5488,790.7960) .. controls
    (472.2718,793.0709) and (513.7398,813.1983) .. (536.3199,822.3293) .. controls
    (538.4295,814.0001) and (541.0651,808.3218) .. (543.3386,808.7547) .. controls
    (548.1970,810.9269) and (553.0335,812.5027) .. (557.0349,813.0626) -- cycle;

\end{scope}

\end{tikzpicture}
\end{figure}
\end{proof}
\begin{clly}\label{FuchsDblCoset}
Let $G$ be the fundamental group of a 2-orbifold $O$; assume $G$ is residually $p$ and that $O$ is orientable if $p\neq 2$. Let $D_1$, $D_2$ be maximal peripheral subgroups of $G$. Then the double coset $D_1 D_2$ is $p$-separable in $G$.
\end{clly}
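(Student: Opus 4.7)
The plan is to reduce to the surface case established in the previous lemma by passing to a finite regular cover of $O$ whose total space is a surface. Since $G$ is residually $p$ and $O$ is $\Z/p$-orientable under our hypotheses, the preliminary lemma furnishes such a cover $\Sigma\to O$ of $p$-power degree; let $H=\pi_1\Sigma \nsgp[p] G$. Each intersection $D_i\cap H$ is then a maximal peripheral subgroup of the surface group $H$, corresponding to a specific boundary component of $\Sigma$ above $\bdy_i$.

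The next step is to express $D_1D_2$ as a finite union of $G$-translates of double cosets of maximal peripheral subgroups in $H$. Choosing coset representatives $d_2^{(j)}$ for $D_2/(D_2\cap H)$, setting $\tilde D_1^{(j)}=(d_2^{(j)})^{-1}D_1 d_2^{(j)}$, and further choosing coset representatives $\tilde d_1^{(j,i)}$ for $\tilde D_1^{(j)}/(\tilde D_1^{(j)}\cap H)$, one writes
\[D_1 D_2 \;=\; \bigcup_{i,j} d_2^{(j)}\tilde d_1^{(j,i)} \cdot (\tilde D_1^{(j)}\cap H)\,(D_2\cap H).\]
By the previous lemma, each factor $(\tilde D_1^{(j)}\cap H)(D_2\cap H)$ is $p$-separable in $H$. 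The only subtlety is the degenerate case when $\tilde D_1^{(j)}$ and $D_2$ coincide as subgroups of $G$; distinct maximal peripheral subgroups in a 2-orbifold group meet trivially, so this is the only scenario in which the two peripheral factors agree as subgroups of $H$, and the double coset collapses to a single peripheral subgroup, $p$-separable in $H$ by Proposition \ref{bdycptssep} applied in the cover.

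The final step is a transfer from $H$ to $G$: I claim that if $S\subseteq H$ is $p$-separable in $H$ and $g\in G$, then $gS$ is $p$-separable in $G$. Indeed, given $g'\in G\smallsetminus gS$, either $g'\notin gH$, in which case the quotient $G\to G/H$ separates $g'$ from $gS\subseteq gH$, or $g^{-1}g'\in H\smallsetminus S$, and a $p$-power quotient $H\to H/N$ separating $g^{-1}g'$ from $S$ may be taken with $N$ characteristic in $H$ and hence normal in $G$, giving the desired $p$-power quotient of $G$. A finite union of $p$-separable sets is $p$-separable, so combining this transfer with the decomposition above yields the conclusion.

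I expect the main obstacle to be the bookkeeping in these decomposition and transfer steps, together with verifying the degenerate coincidence really does force $\tilde D_1^{(j)}=D_2$ in $G$; no substantively new input beyond the previous lemma and the existence of a $p$-power regular cover by a surface will be required.
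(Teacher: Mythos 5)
Your proof is correct and follows essentially the same route as the paper: pass to a regular $p$-power-degree cover by a surface with $H=\pi_1\Sigma\nsgp[p]G$, apply the preceding lemma there, and note that $D_1D_2$ is a finite union of translates of double cosets of maximal peripheral subgroups of $H$, each of which is closed in $G$ because $H$ is open and inherits its full pro-$p$ topology. The only difference is that you spell out the translate bookkeeping and the degenerate coincidence $\tilde D_1^{(j)}=D_2$ explicitly, whereas the paper absorbs both into the ``finite union of translates'' remark made at the start of the preceding lemma's proof, which already permits $D_1=D_2$ via Proposition \ref{bdycptssep}.
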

\begin{proof}
As $G$ is residually $p$, there is a regular index $p$ cover of $O$ which is an orientable surface $\Sigma$. If $H=\pi_1 \Sigma$ then $(D_1\cap H)(D_2\cap H)$ is $p$-separable in $H$. As noted above, this implies that $D_1 D_2$ is $p$-separable in $G$.
\end{proof}
\begin{clly}\label{SFSDblCoset}
Let $G$ be the fundamental group of a \SFS{} $M$ with non-empty boundary; assume $G$ is residually $p$ and let $D_1$, $D_2$ be maximal peripheral subgroups of $G$. Then the double coset $D_1 D_2$ is $p$-separable in $G$.
\end{clly}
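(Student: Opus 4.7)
The plan is to quotient by the regular-fibre subgroup and reduce to the orbifold case (Corollary \ref{FuchsDblCoset}). Since $M$ is a Seifert fibre space with non-empty boundary, the regular fibre $h$ generates a normal infinite cyclic subgroup of $G=\pi_1 M$, giving a short exact sequence
\[ 1 \to \gp{h} \to G \xrightarrow{q} \ofg{O} \to 1, \]
where $O$ is the base orbifold. Each maximal peripheral subgroup $D_i\iso \Z\times\Z$ is the fundamental group of a boundary torus of $M$, and as such contains $h$; its image $\bar D_i := q(D_i)$ is carried by the corresponding boundary circle of $O$, and so is a maximal peripheral subgroup of $\ofg{O}$.

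The crucial set-theoretic observation is the identity $q^{-1}(\bar D_1 \bar D_2) = D_1 D_2$. One inclusion is trivial. For the reverse inclusion, suppose $q(g) = q(d_1)q(d_2)$ with $d_i\in D_i$; then $g = d_1 d_2 h^n$ for some $n\in\Z$, and since $h\in D_2$ one may absorb $h^n$ into the second factor to write $g = d_1 (d_2 h^n) \in D_1 D_2$. Consequently every $g\in G\smallsetminus D_1 D_2$ satisfies $q(g)\notin \bar D_1 \bar D_2$.

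To finish, note that the hypothesis that $G$ is residually $p$ ensures, via the criterion for residual $p$-ness of \SFS{} groups quoted in the preliminaries, that $\ofg{O}$ is residually $p$ and that $O$ is orientable whenever $p\neq 2$. The hypotheses of Corollary \ref{FuchsDblCoset} are therefore satisfied, so there is a finite $p$-group quotient $\ofg{O}\twoheadrightarrow Q$ in which the image of $q(g)$ does not lie in the image of $\bar D_1 \bar D_2$. Composing with $q$ yields the required quotient $G\twoheadrightarrow Q$ separating $g$ from $D_1 D_2$. The argument is essentially a short reduction, and the only step that requires any checking is the preimage identity above; this is immediate from the fact that $h$ lies in every boundary torus subgroup, so no genuine obstacle arises.
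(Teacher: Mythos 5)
Your proof is correct, but it takes a genuinely different reduction from the paper's. The paper uses the fact that $G$ admits a regular $p$-power-index cover of the form $\Sigma\times\sph{1}$ and projects onto the surface factor, obtaining $D_1D_2=\pi_\ast(D_1)\pi_\ast(D_2)\times\Z$ there and appealing to the surface double-coset lemma (together with the standing observation that $D_1D_2$ is a finite union of translates of $(D_1\cap H)(D_2\cap H)$ for $H$ of $p$-power index). You instead quotient by the normal fibre subgroup $\gp{h}$ and push the problem down to the base orbifold via the identity $q^{-1}(\bar D_1\bar D_2)=D_1D_2$, then invoke Corollary \ref{FuchsDblCoset} and pull closedness back along the surjection $q$, which is continuous for the pro-$p$ topologies. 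Both arguments hinge on the same structural fact --- the fibre lies in every maximal peripheral subgroup (since $\gp{h}$ is normal and contained in each boundary torus group, each conjugate $D_i$ contains $h$), so the double coset is saturated in the fibre direction --- but yours avoids the virtual product structure entirely and needs only the fibration exact sequence, at the cost of routing through the orbifold corollary (which itself passes to a surface cover internally). The hypotheses of Corollary \ref{FuchsDblCoset} are indeed supplied by the residual-$p$ criterion for \SFS{} groups quoted in the preliminaries, and your absorption step $d_2h^n\in D_2$ is justified for the reason above; so there is no gap.
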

\begin{proof}
Again it suffices to pass to a regular $p$-cover; because $G$ is residually $p$, $G$ admits a regular $p$-cover of the form $\Sigma\times\sph{1}$, where $\Sigma$ is an orientable surface. If $\pi: \Sigma\times\sph{1}\to\Sigma$ is the projection, then 
\[ D_1 D_2 = \pi_\ast(D_1)\pi_\ast(D_2)\times \Z\]
so the result follows.
\end{proof}

\begin{lem}[cf Lemma 6.3 of \cite{Wilk16}]\label{orbbdypro-p}
Let $O$ be a hyperbolic 2-orbifold with non-empty boundary and no reflector curves. Let $\bdy_1, \bdy_2$ be curves representing components of $\bdy O$. Suppose \ofg[O] is residually $p$. Let $\Gamma=\proP{\ofg[O]}$, and let $\Delta_i$ be the closure in $\Gamma$ of $\pi_1 \bdy_i$. Then for $\gamma_i\in \Gamma$, either $\Delta_1^{\gamma_1}\cap \Delta_2^{\gamma_2}=1$ or $\bdy_1=\bdy_2$ and $\gamma_2 \gamma_1^{-1}\in \Delta_1$.
\end{lem}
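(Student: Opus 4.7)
The plan is to reduce via a $p$-power surface cover and then argue on a pro-$p$ Bass--Serre tree coming from a pants decomposition. Since $\ofg[O]$ is residually $p$, the cone-point lemma of the preliminaries provides a regular cover $\Sigma\to O$ of $p$-power degree with $\Sigma$ a compact orientable surface with non-empty boundary. Setting $H=\pi_1\Sigma$, the subgroup $\proP{H}$ is open of $p$-power index in $\Gamma$. The preimage of $\bdy_i$ in $\Sigma$ is a disjoint union of boundary circles $\tilde\bdy_i^{(1)},\ldots,\tilde\bdy_i^{(k_i)}$, and an elementary cover-theoretic computation shows that $\Delta_i\cap\proP{H}$ decomposes as a disjoint union of $\Gamma$-translates of the closures $\tilde\Delta_i^{(j)}=\overline{\pi_1\tilde\bdy_i^{(j)}}$ in $\proP{H}$. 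A non-trivial intersection $\Delta_1^{\gamma_1}\cap\Delta_2^{\gamma_2}$ therefore forces, after suitable translation, a non-trivial intersection of the form $\tilde\Delta_1^{(j_1)\delta_1}\cap\tilde\Delta_2^{(j_2)\delta_2}$ in $\proP{H}$, so I am reduced to the analogous statement for the surface group $H$ and its peripheral subgroups.

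For the surface case I would choose a maximal collection $\ell_1,\ldots,\ell_n$ of pairwise disjoint non-isotopic essential simple closed curves cutting $\Sigma$ into pairs of pants. By Proposition \ref{surfacesplitting} this splitting is $p$-efficient, so $\proP{H}$ is the pro-$p$ fundamental group of the induced graph of pro-$p$ groups and acts on the standard pro-$p$ tree $T$; edge stabilisers are closures of the $\pi_1\ell_j$, each procyclic and isomorphic to $\Z[p]$, and vertex stabilisers are pro-$p$ completions of pants groups, each free pro-$p$ of rank two. Each $\tilde\Delta_i^{(j)}$ is a maximal procyclic subgroup of a unique vertex stabiliser, namely the pants piece containing the corresponding boundary circle. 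I would then establish that the $\proP{H}$-action on $T$ is 1-acylindrical, using that inside each vertex group $\proP{F_2}$ the three pants-boundary procyclic subgroups are pairwise malnormal. Granting this, a non-trivial intersection $\tilde\Delta_1^{(j_1)\delta_1}\cap\tilde\Delta_2^{(j_2)\delta_2}$ stabilises a path of length at most one. A short case analysis (using $p$-efficiency to rule out accidental conjugacy between a peripheral subgroup of $\Sigma$ and an interior edge-curve subgroup) forces the two fixed vertices to coincide, and the malnormality of the peripheral procyclic subgroup in that single vertex stabiliser then yields $\tilde\bdy_1^{(j_1)}=\tilde\bdy_2^{(j_2)}$ and $\delta_2\delta_1^{-1}\in\tilde\Delta_1^{(j_1)}$. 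Unwinding the cover gives $\bdy_1=\bdy_2$ and $\gamma_2\gamma_1^{-1}\in\Delta_1$.

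The main obstacle is the pro-$p$ input: verifying that the closure of a primitive cyclic subgroup (such as one of the three pants-boundary generators) inside $\proP{F_2}$ really is malnormal, and that two distinct maximal procyclic subgroups of a free pro-$p$ group of finite rank intersect trivially. Abstractly both are standard facts about free groups, but the pro-$p$ analogues are delicate and rest on the fact that centralisers of non-trivial elements in a free pro-$p$ group are procyclic together with the pro-$p$ Bass--Serre machinery for $p$-efficient splittings; this is where I expect the real work of the lemma to lie. Once these facts are in place the rest of the argument is a direct transcription of the profinite treatment of Lemma~6.3 of \cite{Wilk16} into the pro-$p$ setting.
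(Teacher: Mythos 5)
Your reduction to a $p$-power surface cover is sound and matches the paper's first move, but the core of your argument rests on facts you explicitly leave unproven, and that is a genuine gap rather than a deferral of routine detail. Your route runs through a pants decomposition, whose standard pro-$p$ tree has edge stabilisers isomorphic to $\Z[p]$, and you therefore need 1-acylindricity, which you propose to extract from pairwise malnormality of the three peripheral procyclic subgroups inside each $\proP{F_2}$ vertex group, plus a separate argument ruling out accidental conjugacy between a boundary subgroup and an interior pants-curve subgroup. You acknowledge that these pro-$p$ malnormality statements are ``where the real work of the lemma lies'' and do not supply them. They are in fact provable (each of the three pants boundary classes is individually primitive, so its closure is a free factor of $\proP{F_2}$ for a suitable basis, and malnormality of free factors follows from the 0-acylindrical action on the tree of a free pro-$p$ product), but as written your proof has its load-bearing step missing, and the ``short case analysis'' separating peripheral from interior edge subgroups is also not something that can be waved through: both are primitive cyclic subgroups, and distinguishing their conjugacy classes in the pro-$p$ completion needs an actual separability argument.

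The paper sidesteps all of this by choosing a different decomposition. It passes to a regular $p$-power cover $O'$ with \emph{more than two} boundary components, so that the two relevant boundary subgroups can simultaneously be taken as free factors in a free product of cyclic groups decomposing $\ofg[O']$. The standard pro-$p$ tree of a free pro-$p$ product has \emph{trivial} edge stabilisers, the action is 0-acylindrical for free, and the intersection statement for conjugates of vertex stabilisers is immediate; no malnormality input is required. The paper then handles the step you compress into ``unwinding the cover'' with a further argument: if the coset representative $h_i$ did not normalise $D_2$, one passes to a deeper $p$-cover in which the lifts of $D_2^{h_i}$ and $D_2$ become distinct free factors with trivially intersecting closures, contradicting the assumed non-trivial intersection. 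If you want to salvage your pants-decomposition route you must either prove the pro-$p$ malnormality and non-conjugacy statements directly, or notice that the cleanest way to do so is precisely the free-factor trick --- at which point you may as well adopt the paper's decomposition from the start.
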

\begin{proof}
By conjugating by $\gamma_1^{-1}$ we may assume that $\gamma_1=1$; drop the subscript on $\gamma_2=\gamma$. Note that $\Delta_1\cap \Delta_2^\gamma$ is torsion-free, so it is sufficient to pass to a finite index subgroup $\Gamma'$ and show that $\Delta_1\cap \Delta_2^\gamma \cap \Gamma' = 1$. Suppose that this intersection is non-trivial.

Because $O$ is hyperbolic and \ofg[O] is residually $p$, there is some regular cover $O'$ of $O$ with degree a power of $p$ with more than two boundary components; then given any pair of boundary components, \ofg[O'] has a decomposition as a free product of cyclic groups, among which are the two boundary components. Let $\Gamma'$ be the corresponding finite index normal subgroup of $\Gamma$. Note that for some set $\{h_i\}$ of coset representatives of $\Gamma'\cap \ofg[O]$ in \ofg[O] (which give coset representatives of $\Gamma'$ in $\Gamma$), each $\Delta_2 ^{h_i}$ is the closure of the fundamental group of a component of $\bdy O'$; so set $\Delta_3=\Delta_2^{h_i}$ where $\gamma=h_i \gamma'$ for some $\gamma'\in \Gamma'$. Furthermore, if two boundary components of $O$ are covered by the same boundary component $O'$, then they must have been the same boundary component of $O$; that is, if $\Delta_1\cap \Gamma'=\Delta_3\cap \Gamma'$, then $\Delta_1=\Delta_3$. 

Now the intersections of $\Delta_1, \Delta_3$ with $\Gamma'$ are free factors; that is, \[\Gamma'=(\Delta_1\cap \Gamma')\amalg (\Delta_3\cap \Gamma')\amalg \Phi\]
where $\Phi$ is a free pro-$p$ product of cyclic groups (unless $\Delta_1=\Delta_3$, when $\Gamma'=(\Delta_1\cap \Gamma')\amalg F$). Let $T$ be the standard graph for this free product decomposition of $\Gamma'$. Then $\Delta_1\cap \Gamma'=\Gamma'_v, \Delta_3\cap \Gamma' = \Gamma'_w$ for vertices $v,w\in T$. The action on $T$ is 0-acylindrical because all edge stabilisers are trivial; so for $\gamma'\in \Gamma'$, the intersection 
\[\Delta_1\cap \Delta_3^{\gamma'}\cap \Gamma'= \Gamma'_v\cap \Gamma'_{{\gamma'}^{-1}\cdot w}\] can only be non-trivial if $v={\gamma'}^{-1}\cdot w$, so that $\Delta_1\cap \Gamma'=\Delta_3\cap \Gamma'$ (hence $\Delta_1=\Delta_3$) and $\gamma'\in \Delta_1$.

We have reduced to the case where $\Delta_2^{h_i}=\Delta_1$ and must show that $D_1=D_2$ and $h_i\in D_2$, for then our original element $h_i \gamma'= \gamma\in \Gamma$ is in $\Delta_2$. The intersection of two distinct peripheral subgroups of $\ofg[O]$ is trivial, and peripheral subgroups coincide with their normalisers in \ofg[O]. Suppose that $D_2^{h_i}\neq D_2$. We can pass to a regular $p$-cover of $O$ to which $h_i$ does not lift, and with more than two boundary components; so that the lifts of $D_2^{h_i}$ and $D_2$ are distinct free factors, hence their closures in the pro-$p$ completion have trivial intersection. But $\Delta_1\cap \Delta_2\neq 1$ by assumption, so that in fact $D_2^{h_i}= D_2$ hence $h_i\in \Delta_2=\Delta_1$ as required.
\end{proof}

\begin{lem}[Proposition 5.4 of \cite{WZ10}]\label{bdyintersectpro-p}
Let $L$ be a \SFS{} with non-empty boundary with hyperbolic base orbifold $O$. Suppose that $\pi_1 L$ is residually $p$. Let $\Lambda=\proP{\pi_1 L}$, and $Z$ be the subgroup of $\pi_1 L$ generated by a regular fibre. Let $\Delta_1$, $\Delta_2$ be peripheral subgroups of $H$; that is, conjugates in $H$ of the closure of peripheral subgroups of $\pi_1 L$. Then $\Delta_1\cap \Delta_2=\bar Z$ unless $\Delta_1=\Delta_2$, where $\bar Z$ is the closure of $Z$ in $\Lambda$.
\end{lem}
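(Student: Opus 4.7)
The plan is to use the Seifert fibration to reduce the statement to Lemma \ref{orbbdypro-p} applied to the base orbifold. Since $\pi_1 L$ is residually $p$, the criterion quoted in the preliminaries says that $O$ is $\Z/p$-orientable and $\ofg[O]$ is residually $p$; in particular the subgroup $Z$ generated by a regular fibre is normal (indeed central when $p$ is odd or the base is orientable) in $\pi_1 L$, and one obtains the central extension
\[ 1 \to Z \to \pi_1 L \to \ofg[O] \to 1. \]
First I would argue that pro-$p$ completion preserves this exact sequence, giving
\[ 1 \to \bar Z \to \Lambda \xrightarrow{q} \Gamma \to 1,\qquad \Gamma=\widehat{\ofg[O]}_{(p)}, \]
with $\bar Z$ a closed normal subgroup of $\Lambda$. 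This is the one non-trivial technical input: it requires that $Z$ is closed in the pro-$p$ topology of $\pi_1 L$ and that $\pi_1 L$ induces the full pro-$p$ topology on $Z$, which follows from centrality (or $\Z/p$-centrality when $p=2$) together with the fact that both the total space and the base have residually $p$ fundamental group.

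Next, every peripheral subgroup $P$ of $\pi_1 L$ is the fundamental group of a boundary torus fibred by regular fibres of $L$; in particular $Z\leq P$, so $\bar Z\leq \bar P$. Since $\bar Z$ is normal in $\Lambda$, conjugation gives $\bar Z\leq \Delta_i$ for $i=1,2$, yielding the easy inclusion $\bar Z\leq \Delta_1\cap\Delta_2$.

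For the reverse inclusion, write $\Delta_i=\bar{P_i}^{\gamma_i}$, where $P_i$ is a peripheral subgroup of $\pi_1 L$ sitting over a boundary curve $\bdy_i$ of $O$. Then $q(\Delta_i)$ is a conjugate by $q(\gamma_i)$ of the closure in $\Gamma$ of $\pi_1\bdy_i$. Suppose for contradiction that $\Delta_1\neq\Delta_2$ but $\Delta_1\cap\Delta_2\not\subseteq\bar Z=\ker q$; then $q(\Delta_1)\cap q(\Delta_2)\neq 1$. Lemma \ref{orbbdypro-p} forces $\bdy_1=\bdy_2$ (so $P_1=P_2=:P$) and $q(\gamma_2\gamma_1^{-1})\in q(\bar P)$; lifting and using $\bar Z\leq\bar P$ gives $\gamma_2\gamma_1^{-1}\in\bar P$, whence $\Delta_1=\bar P^{\gamma_1}=\bar P^{\gamma_2}=\Delta_2$, a contradiction. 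Thus $\Delta_1\cap\Delta_2\subseteq\bar Z$, completing the proof.

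The main obstacle is the first step: establishing that the fibration extension remains short exact under pro-$p$ completion, since this is precisely the sort of "$p$-efficiency for the fibration" statement that fails without residually-$p$ hypotheses on both ends. Once that is in hand, the remainder is a straightforward translation of the orbifold statement of Lemma \ref{orbbdypro-p}, exploiting that $\bar Z$ is both contained in every peripheral $\Delta_i$ and contained in the kernel of $q$, which is what makes the two cases of the orbifold lemma correspond cleanly to the two cases of the present lemma.
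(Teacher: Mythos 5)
Your proposal is correct and follows essentially the same route as the paper, which simply defers to Proposition 5.4 of \cite{WZ10} ``given the previous lemma'': one passes to the central extension by the fibre subgroup, checks that it stays exact under pro-$p$ completion, and then pulls back the two cases of Lemma \ref{orbbdypro-p} using that $\bar Z$ is both the kernel of the projection and contained in every peripheral closure. The one step you flag as non-trivial (exactness of the completed fibre sequence, i.e.\ that $Z$ is closed and topologically $p$-embedded) is indeed the only technical input, and it holds for residually $p$ Seifert fibred groups exactly as you indicate.
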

\begin{proof}
Identical to loc.\ cit.\, given the previous lemma.
\end{proof}

For the next two propositions we use the following notation. Let $G$ be the fundamental group of a $p$-efficient graph manifold, with graph of groups decomposition $(X,G_\bullet)$. Let $\Gamma=\Pi_1(\proP{\cal G})$ be the pro-$p$ completion of $G$. Let $S(\proP{\cal G})$ be the standard tree for this graph of pro-$p$ groups. For a vertex group $G_v$ of $\cal G$, let $Z_v$ be the subgroup generated by its regular fibre (the `canonical fibre subgroup'). Let $\bar Z_v$ be the closure in $\Gamma_v=\proP{G_v}$ and extend this notation to all vertex groups of $S(\proP{\cal G})$ by the conjugation action. 

\begin{lem}
Let $e=[v,w]$ be an edge of $S(\proP{\cal G})$. Let $Z_v, Z_w$ be the canonical fibre subgroups of $G_v,G_w$ respectively. Then $\gp{\bar Z_v, \bar Z_w}\nsgp[p] \Gamma_e$, and so $\bar Z_v\cap \bar Z_w =1$.
\end{lem}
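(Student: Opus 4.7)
The plan is to reduce everything to a linear-algebra calculation inside the edge stabiliser $\Gamma_e\cong \proP{G_e}\cong \Z_p^2$, using that a JSJ torus in a graph manifold has two non-parallel Seifert fibres on it.

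First I would show that $\bar Z_v$ and $\bar Z_w$ are both contained in $\Gamma_e$. The regular fibre of a Seifert fibred piece with boundary is central and can be isotoped onto any boundary component, so the cyclic subgroup $Z_v\leq G_v$ is in fact a subgroup of the peripheral subgroup $G_e$, and likewise $Z_w\leq G_e$. Because the JSJ decomposition is $p$-efficient, $G_e$ is topologically $p$-embedded in $G_v$, so the inclusion $\Gamma_e\hookrightarrow \Gamma_v$ identifies $\Gamma_e$ with a closed subgroup of $\Gamma_v$, and the closure of $Z_v$ taken in $\Gamma_v$ agrees with its closure taken in $\Gamma_e$.

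Next choose generators $\alpha_v\in Z_v$ and $\alpha_w\in Z_w$, viewed as elements of $G_e\cong \Z^2$. By the defining property of the JSJ decomposition of a graph manifold the two Seifert fibrations do not match along the JSJ torus corresponding to $e$, so $\alpha_v$ and $\alpha_w$ are non-parallel curves on that torus; equivalently $\det(\alpha_v,\alpha_w)\neq 0$ in $\Z$, so $\gp{\alpha_v,\alpha_w}$ has finite index in $\Z^2$. The closed subgroup of $\Gamma_e$ generated by $\bar Z_v=\Z_p\alpha_v$ and $\bar Z_w=\Z_p\alpha_w$ is the $\Z_p$-module $\Z_p\alpha_v+\Z_p\alpha_w$, whose index in $\Z_p^2$ is finite; since $\Gamma_e$ is pro-$p$, every finite-index closed subgroup has $p$-power index, and normality is automatic as $\Gamma_e$ is abelian, giving $\gp{\bar Z_v,\bar Z_w}\nsgp[p]\Gamma_e$.

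Finally, for the intersection statement, I would invoke the standard Bass--Serre computation in the pro-$p$ tree $S(\proP{\cal G})$ which gives $\Gamma_v\cap \Gamma_w=\Gamma_e$ for the endpoints of an edge $e$. Hence $\bar Z_v\cap \bar Z_w\subseteq \Gamma_e\cong \Z_p^2$, where $\bar Z_v,\bar Z_w$ are rank-one free $\Z_p$-submodules spanned by $\Z_p$-linearly independent elements (as their sum has full rank by step two); two such submodules of a torsion-free abelian pro-$p$ group intersect trivially.

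The only substantive input is the topological fact that the fibres of the two adjacent Seifert pieces are non-parallel on the JSJ torus, together with $p$-efficiency; once those are in hand the rest is routine pro-$p$ module theory, and I do not expect any serious obstacle.
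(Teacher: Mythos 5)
Your proposal is correct and follows essentially the same route as the paper: both arguments reduce to the edge group $G_e\iso\Z^2$, use that the two fibres are non-parallel on the JSJ torus so that they generate a finite-index subgroup of $\Z^2$, and then observe that the corresponding $\Z[p]$-span has index a power of $p$ in $\Gamma_e\iso\Z[p]^2$ (the paper phrases this via the index $p^rm$ with $m$ coprime to $p$, you via the determinant and the fact that closed finite-index subgroups of a pro-$p$ group are open of $p$-power index), whence the trivial intersection of the two rank-one summands. Your explicit verification that $\bar Z_v,\bar Z_w$ lie in $\Gamma_e$ via $p$-efficiency only makes precise what the paper leaves implicit after its conjugation reduction.
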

\begin{proof}
After a conjugation in $\Gamma$, we may assume that $e$ is an edge in the standard graph of the abstract fundamental group $G$, i.e. $\Gamma_e$ is the closure in $\Gamma$ of a peripheral subgroup of some $G_v$. Elementary calculations show that if two elements of $\Z^2$ generate an index subgroup $p^r m$ subgroup of $\Z^2$, where $m$ is coprime to $p$, then they generate a subgroup of any $p$-group quotient of $\Z^2$ of index dividing $p^r$; hence generate an index $p^r$ subgroup of $\Z[p]^2$. The result follows.
\end{proof}

\begin{prop}[cf Proposition 6.8 of \cite{Wilk16}, Lemma 5.5 of \cite{WZ10}]\label{JSJacylpro-p}
Let $M$ be a $p$-efficient graph manifold in which all \SFS{}s have hyperbolic base orbifold. Then the action of $\Gamma=\proP{\pi_1 M}$ on the standard graph $S(\proP{\cal G})$ is 2-acylindrical.
\end{prop}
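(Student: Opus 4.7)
The plan is to reduce to the stabiliser of a reduced path of length exactly $3$ in $S(\widehat{\cal G}_{(p)})$ and show that any such stabiliser is trivial; then automatically every path of length $>2$ has trivial stabiliser. So consider a reduced path
\[ v_0 \stackrel{e_1}{\text{---}} v_1 \stackrel{e_2}{\text{---}} v_2 \stackrel{e_3}{\text{---}} v_3 \]
in $S(\widehat{\cal G}_{(p)})$, and let $\Sigma$ be its stabiliser in $\Gamma$. Then $\Sigma \leq \Gamma_{e_1}\cap \Gamma_{e_2}\cap \Gamma_{e_3}$. The edge groups $\Gamma_{e_1},\Gamma_{e_2}$ are two peripheral subgroups of the vertex group $\Gamma_{v_1}$, and since the path is reduced they correspond to distinct incident edges at $v_1$; similarly $\Gamma_{e_2},\Gamma_{e_3}$ are distinct peripheral subgroups of $\Gamma_{v_2}$.

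Next, I apply Lemma \ref{bdyintersectpro-p} at $v_1$ and at $v_2$. Since each vertex group is the closure of the fundamental group of a Seifert fibre space with hyperbolic base orbifold (by hypothesis) and is residually $p$ (because the decomposition is $p$-efficient), the lemma gives
\[ \Gamma_{e_1}\cap \Gamma_{e_2} \subseteq \bar Z_{v_1}, \qquad \Gamma_{e_2}\cap \Gamma_{e_3} \subseteq \bar Z_{v_2}, \]
where $\bar Z_{v_i}$ denotes the closure of the canonical fibre subgroup inside $\Gamma_{v_i}$. Intersecting, $\Sigma \subseteq \bar Z_{v_1}\cap \bar Z_{v_2}$.

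Finally, the vertices $v_1$ and $v_2$ are joined by the edge $e_2$ in $S(\widehat{\cal G}_{(p)})$, so the preceding lemma (on intersections of fibre subgroups across an edge) applies to give $\bar Z_{v_1}\cap \bar Z_{v_2}=1$. Hence $\Sigma=1$, as required.

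The main obstacle is really administrative: one has to check that Lemma \ref{bdyintersectpro-p} genuinely applies to $\Gamma_{v_1}$ and $\Gamma_{v_2}$ after a conjugation in $\Gamma$, i.e.\ that the peripheral subgroups in question are the stated conjugates of standard peripheral subgroups in the abstract fundamental group, and that the distinctness hypothesis ($\Delta_1\neq \Delta_2$) holds for distinct edges in $S(\widehat{\cal G}_{(p)})$ at a vertex. Both follow from the standard description of the action of $\Gamma$ on $S(\widehat{\cal G}_{(p)})$ and the fact that in a reduced path the edge groups at a common vertex lie in different $\Gamma_v$-cosets of the stabiliser of the edge incidence, hence are distinct peripheral subgroups. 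Once these points are noted, the proof is just the chain of containments above.
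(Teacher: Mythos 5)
Your proof is correct and is essentially identical to the paper's: both reduce to a path of length $3$, apply Lemma \ref{bdyintersectpro-p} at the two interior vertices to trap the stabiliser inside $\bar Z_{v_1}\cap\bar Z_{v_2}$, and then invoke the preceding lemma on fibre subgroups across an edge to conclude this intersection is trivial. The administrative checks you flag are implicit in the paper's (very terse) argument.
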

\begin{rmk}
The condition on the base orbifolds is automatic when $p\neq 2$; in general it may be achieved by passing to an index 2 cover.
\end{rmk}
\begin{proof}
Take a path of length 3 in $S(\proP{\cal G})$ consisting of edges $e_0, \ldots, e_2$ joining vertices $v_0,\ldots, v_3$. By Lemma \ref{bdyintersectpro-p}, $\Gamma_{e_0}\cap \Gamma_{e_1}=\bar Z_{v_1}$ and $\Gamma_{e_1}\cap \Gamma_{e_2}=\bar Z_{v_2}$; but $\bar Z_{v_1}\cap \bar Z_{v_2}$ is trivial by the previous lemma. So $\bigcap_{i=0}^2 \Gamma_{e_i}$ is trivial as required.
\end{proof}

\begin{prop}\label{FuchsCp-D}
Let $O$ be a hyperbolic 2-orbifold with non-empty boundary and no reflector curves. Let $G=\ofg[O]$ and suppose $G$ is residually $p$. Let $D=\gp{l}$ be the fundamental group of a boundary component of $O$. Then $D$ is conjugacy $p$-distinguished in $G$. 
\end{prop}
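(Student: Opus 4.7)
The plan is to reduce, via a standard inverse-limit argument over $p$-power-index normal subgroups of $G$, to showing that if $g\in G$ is conjugate in $\Gamma=\proP{G}$ to some $d$ in $\bar D$ (the closure of $D$ in $\Gamma$), then $g$ is already $G$-conjugate to an element of $D$. If $g$ has finite order then so does $d$; since $\bar D\cong\Z[p]$ is torsion-free this forces $g=1\in D$, so we may assume $g$ and $d$ have infinite order, with $\gamma g\gamma^{-1}=d$ for some $\gamma\in\Gamma$.

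Because $G$ is residually $p$, pick a normal subgroup $K\nsgp[p]G$ with $K=\pi_1\Sigma'$ for an orientable surface $\Sigma'$, and pass deeper in the tower of $p$-power covers to arrange that $\Sigma'$ has at least two boundary components, at least one of which is a lift of the component of $\bdy O$ corresponding to $l$. Choose $j$ large enough that $g^{p^j}\in K$ and $l^{p^j}\in K$, and write $\gamma=g_0\eta$ with $g_0\in G$ a coset representative for $\Gamma/\proP{K}\cong G/K$ and $\eta\in\proP{K}$. Conjugation by $\eta$ then identifies $g^{p^j}\in K$ with $g_0^{-1}d^{p^j}g_0$, which lies in the closure $\bar A$ in $\proP{K}$ of the boundary subgroup $A:=K\cap g_0^{-1}Dg_0$ of $K$. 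Writing $l':=g_0^{-1}lg_0$ and $p^{k_0}$ for the order of $l'$ modulo $K$, the group $A$ is generated by $l_A:=(l')^{p^{k_0}}$.

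The heart of the argument uses a $p$-efficient free-product decomposition $K=A\ast F$, available because $A$ is a boundary subgroup of the surface $\Sigma'$ which has at least two boundary components: by a suitable choice of free basis of the free group $K$, the generator $l_A$ can be placed as a free factor. The resulting free-product splitting of the residually $p$ group $K$ is automatically $p$-efficient, so $\proP{K}=\bar A\amalg\proP{F}$ and the standard pro-$p$ tree $T$ has trivial edge stabilisers, giving a $0$-acylindrical action of $\proP{K}$ on $T$. Since $g^{p^j}$ is $\proP{K}$-conjugate to $v\in\bar A$, it fixes a vertex of $T$; arguing as in Lemma \ref{orbbdypro-p} by comparing the action on the abstract Bass--Serre tree with that on $T$, this rules out $g^{p^j}$ being hyperbolic on the abstract tree, so $g^{p^j}$ is $K$-conjugate into a vertex group. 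Conjugation into $F$ is precluded: an element $f\in F$ conjugate in $\proP{K}$ to a non-trivial $v\in\bar A$ would fix two vertices of $T$ in distinct $\proP{K}$-orbits and hence stabilise an edge on the path between them, forcing $f=1$ by 0-acylindricity and hence $g^{p^j}=1$, contradicting infinite order of $g$. Therefore $hg^{p^j}h^{-1}=l_A^m$ for some $h\in K$ and non-zero integer $m$.

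Undoing the conjugations, $y:=g_0h\in G$ satisfies $yg^{p^j}y^{-1}=l^{mp^{k_0}}=:l^n$ with $n\neq 0$, so $ygy^{-1}$ is a $p^j$-th root of $l^n$ in $G$. It commutes with $l^n$ and therefore with $l$ itself, since $D=\gp{l}$ is maximal cyclic in the Fuchsian group $G$ and $C_G(l^n)=C_G(l)=D$ for any non-zero $n$ (the centraliser of any non-trivial element of a boundary subgroup of a hyperbolic $2$-orbifold without reflector curves is the boundary subgroup itself). Therefore $ygy^{-1}\in D$, and so $g$ is $G$-conjugate to an element of $D$, as required. The main obstacle is the third paragraph: arranging the cover $\Sigma'$ precisely enough that $A$ sits as a free factor of $K$, and then running the abstract-versus-pro-$p$ Bass--Serre tree comparison — exploiting $0$-acylindricity — to force $g^{p^j}$ into the factor $A$.
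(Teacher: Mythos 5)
Your overall strategy matches the paper's: reduce to the statement that $g\in G$ conjugate in $\proP{G}$ into $\bar D$ must already be conjugate in $G$ into $D$, pass to a normal subgroup of $p$-power index in which (a conjugate of) the peripheral subgroup sits as a free factor, settle the free-factor case there, and descend using the fact that centralisers of non-trivial peripheral elements of a Fuchsian group are the peripheral subgroups themselves (the paper phrases this as ``parabolic subgroups intersect trivially or are equal''). The reduction, the finite-order case, the coset bookkeeping in your second paragraph, and the exclusion of the factor $F$ by $0$-acylindricity are all fine.

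The gap is the sentence ``arguing as in Lemma \ref{orbbdypro-p} by comparing the action on the abstract Bass--Serre tree with that on $T$, this rules out $g^{p^j}$ being hyperbolic on the abstract tree.'' Lemma \ref{orbbdypro-p} only compares vertex stabilisers \emph{within} the pro-$p$ tree: both subgroups there are already known to be conjugates of closures of vertex groups, so no comparison with the abstract tree is ever made. What you actually need is the implication that an element of $K=A\ast F$ fixing a vertex of the pro-$p$ tree $T$ must fix a vertex of the abstract tree $S$ --- equivalently, that an element of $K$ not conjugate in $K$ into a free factor cannot become conjugate into $\bar A$ in $\proP{K}$. That implication \emph{is} the free-factor case of the proposition, so invoking it here is essentially circular; and $0$-acylindricity does not supply it, since in a pro-$p$ tree an element may fix one vertex while moving another, so the existence of an abstract axis produces no immediate contradiction. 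The paper fills exactly this hole with a concrete argument: write the element as a cyclically reduced word in the free product, observe that failure to be conjugate into the factor is witnessed by one of finitely many combinatorial conditions on the reduced form, push these conditions into a free product $P_1\ast P_2$ of finite $p$-groups using residual $p$-ness of the factors, and invoke conjugacy $p$-separability of $P_1\ast P_2$ (Corollary \ref{freeprodCpS}). To complete your proof you would need either to reproduce such an argument or to cite a specific theorem from pro-$p$ Bass--Serre theory asserting that, for a $p$-efficient free product decomposition, elements of the abstract group that are elliptic on the pro-$p$ tree are elliptic on the abstract tree; neither is currently in your write-up.
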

\begin{proof}
First suppose that $D$ is a free factor of $G$, say $G=D\ast G'$. Suppose that $g\in G$ is not conjugate in $G$ to any power of $l$. Write $g$ as a reduced word
\[ g= g_1 d_1 g_2\ldots g_n d_n\]
where $g_i \in G'$, $d_i\in D$ are all non-trivial except perhaps $g_1, d_n$. We may ensure $g_1\neq 1$ by conjugating by $d_1$. Since $g$ is not conjugate into $D$, at least one of the following occurs:
\begin{itemize}
\item $n$ is odd
\item $d_n\neq 1$
\item for some $i$, $g_i\neq g_{n+1-i}^{-1}$
\item for some $i\neq n/2$, $d_i\neq d_{n-i}^{-1}$
\end{itemize}
since if all the above fail, we have expressed $g$ as a conjugate of $d_{n/2}$. By uniqueness of reduced forms, no element whose reduced form has any of the above properties can be conjugate into $D$; for writing any $h\in G$ as a reduced word, $h^{-1}dh$ is already written as a reduced word, having none of the above properties.

Now $G$ is residually $p$, so we may find finite $p$-group quotients $D\to P_1$, $G'\to P_2$ such that no non-trivial $d_i$  or $g_i$ vanishes under the quotient map, and so that any of the properties from the above list are preserved in the quotient; that is, if $\phi:G\to P_1\ast P_2$ is the quotient map, $\phi(g)$ is a reduced word in $P_1\ast P_2$, which has one of the above properties, hence is not conjugate into $P_1$. Since $P_1$ is finite and $P_1\ast P_2$ is conjugacy $p$-separable, there is a $p$-group quotient $\psi:P_1\ast P_2\to Q$ such that $\psi\phi(g)$ is not conjugate into $\psi\phi(D)=\psi(P_1)$; hence $D$ is conjugacy $p$-distinguished in $G$. 

We now deal with the general case. Let $g\in G$ and suppose that $\gamma^{-1} g\gamma=l^\alpha\in \bar D$ for some $\gamma\in\proP{G},\alpha\in\Z[p]$. Note that $g$ is infinite order. Let $F\nsgp[p]G$ represent a regular $p$-power degree cover of $O$ with more than one boundary component, so that $D\cap F$ is a free factor of $F$. Note that $\gamma=h\delta$ for some $h\in G,\delta\in\bar F$. For some $n=p^r$, we have $g^n\in F$; and 
\[ \delta^{-1} (h^{-1}g^n h) \delta = \gamma^{-1} g^n \gamma = l^{n\alpha}\in \overline{F\cap D}\]
By the first part, since $F\cap D$ is conjugacy $p$-distinguished in $F$ and $\delta\in\bar F$, there exists some $f\in F$ such that $f^{-1} (h^{-1}g^n h) f\in F\cap D$. Thus $g'= (hf)^{-1}g(hf)$ is a parabolic element of $G$, some power of which lies in $D$; and since parabolic subgroups of a Fuchsian group either intersect trivially or are equal, it follows that $g'\in D$ so that $g$ is conjugate into $D$ as required.
\end{proof}
Recall for the following that the boundary of a 2-orbifold is not necessarily the same as the boundary $\bdy_{\rm top}$ of the underlying surface. An orbifold with boundary is locally modelled on quotients of open subsets of the upper half-plane by group actions, and boundary points of the orbifold come from boundary points of the upper half-plane. Some portions of $\bdy_{\rm top}$ may indeed be part of the orbifold boundary; however some of $\bdy_{\rm top}$ may be included in the singular locus as `reflector' curves. The isotropy group of an interior point of a reflector curve is $\Z/2$. The endpoints of a reflector curve may have `corner reflector' points whose isotropy subgroup is dihedral. Alternatively an endpoint of a reflector curve may again have isotropy group $\Z/2$, the local model for such a point being the upper half-plane modulo a reflection in the $y$-axis. Since reflections are order 2, when $p\neq 2$ reflector curves do not arise in an orbifold with residually $p$ fundamental group. When they do arise, there is a canonical `reflectorless' index 2 cover of the orbifold with no reflector curves; corner reflectors become cone points in this cover. An orbifold is said to be orientable if its underlying surface is orientable.

\begin{lem}\label{reflectors2-dist}
Let $O$ be a hyperbolic 2-orbifold with residually 2 fundamental group $G$ and let $\rho$ be  a reflector curve of $O$ with isotropy group $\Z/2=\gp{\tau}$. Then $\tau$ is conjugacy 2-distinguished in $G$.
\end{lem}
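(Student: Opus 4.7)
I would mirror the proof of Proposition~\ref{FuchsCp-D}, using residual $2$-ness and the reflectorless double cover to reduce to a core case, which I would then handle via a free-product decomposition in a further $2$-power cover.

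Let $g \in G$ be not $G$-conjugate to $\tau$. Since $\tau$ has order $2$, any $2$-quotient in which the image of $g$ has order different from $2$ suffices: the images cannot be conjugate. When $g^2 \neq 1$ (including when $g$ has infinite order, or finite order greater than $2$), residual $2$-ness of $G$ yields such a quotient, and the case $g = 1$ is similar. Hence we may assume $g$ is an involution. Now let $\epsilon \colon G \to \Z/2$ be the reflectorless homomorphism with kernel $H = \pi_1 O'$, where $O'$ is the reflectorless double cover: every reflection (in particular $\tau$) maps to $1$, and every rotation maps to $0$. If $g \in H$ (so $g$ is an order-$2$ rotation), then $\epsilon$ itself distinguishes $g$ from $\tau$. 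The remaining core case is that $g$ is a reflection, and hence a generator of the isotropy of some reflector curve $\rho_g$ whose $G$-orbit is distinct from that of $\rho$.

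For the core case, I would produce a regular $2$-power cover $\widehat O \to O$ with $\widehat G := \pi_1 \widehat O \ni \tau$ such that a regular neighbourhood of some lift $\tilde\rho$ of $\rho$ is an isolated reflector disc in $\widehat O$, bounded by an essential simple closed curve; this would yield a free-product splitting $\widehat G = \gp{\tau} \ast K$. Existence of such a cover is to be established by residual $2$-ness of $G$ together with the Fuchsian covering-space techniques used in Proposition~\ref{FuchsCp-D}, adapted to reflector curves rather than boundary loops. Granted the decomposition, the reduced-word argument of Proposition~\ref{FuchsCp-D} separates the $\widehat G$-conjugacy class of $\tau$ from any other $\widehat G$-conjugacy class by a finite $2$-group quotient of $\widehat G$; in particular it separates $\tau$ from each of the finitely many $\widehat G$-conjugacy classes meeting the $G$-conjugacy class of $g$ in $\widehat G$. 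Finally, transferring this quotient to a $2$-quotient of $G$ through the normal-core construction (using that $\widehat G$ has $2$-power index in $G$) completes the proof.

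The principal obstacle is the construction in the third paragraph: producing a $2$-power cover of $O$ in which the reflector isotropy $\gp{\tau}$ becomes a $\Z/2$ free factor of $\pi_1 \widehat O$. This requires isolating a chosen reflector neighbourhood from the rest of the orbifold by an essential simple closed curve, which in turn depends on carefully exploiting the pro-$2$ topology on $G$ to control which curves become suitably trivial in covers; it is here that the analogy with the peripheral-case argument of Proposition~\ref{FuchsCp-D} must be executed most carefully, since reflector curves sit in the singular locus rather than on $\bdy O$.
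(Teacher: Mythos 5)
Your preliminary reductions are sound and agree with the paper's opening step: order considerations dispose of everything except involutions, and the reflectorless double cover separates $\tau$ from the order-$2$ rotations, leaving only the problem of distinguishing reflections attached to different reflector curves. The gap is in your core construction, and it is not merely a delicate point to be ``executed carefully'': the splitting $\widehat G=\gp{\tau}\ast K$ you want cannot exist in general. If $\rho$ is a reflector circle (an entire component of $\bdy_{\rm top}(O)$), a regular neighbourhood of it is an annulus with one reflector boundary, whose orbifold fundamental group is $\Z\times\Z/2$; hence the centraliser of $\tau$ in $G$ contains a copy of $\Z\times\Z/2$, and the same holds in every finite-index subgroup containing $\tau$, since the $\Z$ only shrinks to a finite-index copy of itself. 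But free factors are malnormal, so in a nontrivial free product the centraliser of the generator of a $\Z/2$ free factor is exactly that $\Z/2$. Therefore no $2$-power cover makes $\gp{\tau}$ a free factor; cutting along the essential curve bounding the annular neighbourhood yields only an amalgam $(\Z\times\Z/2)\ast_{\Z}K$, to which the reduced-word argument of Proposition~\ref{FuchsCp-D} does not apply. A similar obstruction (a dihedral isotropy group containing $\tau$) arises when $\rho$ terminates at corner reflectors. Your plan works only in the special case where $\rho$ is an arc flanked by genuine boundary, where cutting along intervals does give $\gp{\tau}$ as a free factor.

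The paper avoids covers entirely in this step and works with quotient orbifolds instead: collapse $O$ onto a neighbourhood of the component of $\bdy_{\rm top}(O)$ carrying $\rho$ (this kills the reflections on the other components, which are then detected by the reflectorless $\Z/2$ quotient), and abelianise each corner-reflector isotropy group to $\Z/2\oplus\Z/2$. The result is a right-angled Coxeter group in which $\tau$ and any competing reflection $\tau'$ on the same component are distinct standard generators, hence have distinct images in $H_1(\,\cdot\,;\Z/2)$ and are non-conjugate in a finite $2$-group quotient. To repair your argument you would need to replace the third paragraph by something of this kind, or else run a genuine amalgamated-product conjugacy argument over $\Z$ with the attendant efficiency, acylindricity and double-coset hypotheses.
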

\begin{proof}
First consider the reflectorless degree 2 cover $O'$ of $O$ obtained by doubling along reflector curves, and the corresponding index 2 subgroup $G'$ of $G$. The order 2 elements of $G$ which do not lie in $G'$ are precisely the conjugates of reflector elements; cone points in $O$ lift to $O'$, and the intersection of each isotropy group of a corner reflector with $G'$ is precisely its rotation subgroup. It thus suffices to distinguish $\tau$ from the other reflector elements. So let $\rho'$ be a different reflector curve of $O$, with isotropy group $\gp{\tau'}$. Take a quotient of the orbifold $O$ by collapsing the complement of a neighbourhood of the boundary component of $\bdy_{\rm top}(O)$ containing $\rho$. If this component did not contain $\rho'$, then in this quotient group $\tau'$ has become trivial; so the canonical reflectorless cover of this quotient orbifold yields a quotient $\Z/2$ distinguishing $\tau$ from $\tau'$. Pass to a further quotient by abelianising the isotropy group of each corner reflector to obtain a copy of $\Z/2\oplus\Z/2$, where the two incident reflector curves generate the two factors. We are left with a right-angled Coxeter group in which $\tau,\tau'$ form part of a standard generating set; they thus have distinct images in first $\Z/2$-homology, hence are not conjugate in this quotient of $G$. This completes the proof. 
\end{proof}

\begin{defn}\label{Hakenorb}
A {\em hierarchical} (2-)orbifold will mean any 2-orbifold which is not on the following list:
\begin{itemize}
\item a sphere or projective plane with at most 3 cone points; or
\item a disc or M\"obius band, with $\bdy_{\rm top}$ composed entirely of reflector curves and with at most one cone point and at most three corner reflectors.
\end{itemize}
\end{defn}
Notice that in the above definition the reflectorless cover of any hierarchical orbifold is also hierarchical.

The reason for this definition is that all hierarchical orbifolds $O$ admit a `hierarchy' of the following type. If the orbifold has any genuine boundary curves or arcs, then cutting along arcs with both endpoints on a genuine boundary curve/arc (i.e.\ along an interval with trivial fundamental group) or along an arc with one endpoint on a genuine boundary curve/arc and the other endpoint on a reflector curve (i.e.\ along the quotient of an interval by a reflection, a 1-orbifold with fundamental group $\Z/2$) allows us to decompose the orbifold fundamental group into copies of \Z, $\Z/2\ast \Z/2$, and $p$-groups glued along copies of $\Z/2$ or the trivial group. Note that in this case the reflectorless index 2 subgroup is correspondingly decomposed as a free product of \Z{} and a collection of $p$-groups.

When the entirety of $\bdy_{\rm top}$ is composed of reflector curves, and $O$ is not on the above list, one may still obtain a hierarchy. We will not in fact use this hierarchy in the sequel, but it gives more consistency to the definition of `hierarchical'. The first stage in the hierarchy is obtained as follows. If $O$ is a disc, or M\"obius band with reflector boundary and at least four corner reflectors, let $l$ be an embedded 1-orbifold whose endpoints lie on the reflector curve such that at least two corner reflectors lie on either side of $l$; note that the orbifold fundamental group of $l$ is a copy of $\Z/2\ast \Z/2$ along which $G$ splits. If $O$ is a cylinder with reflector boundary, choose an embedded 1-orbifold $l$ with one endpoint on each reflector curve; again $G$ splits over $\Z/2\ast \Z/2=\ofg[l]$. Otherwise choose an essential simple closed curve $l$ on $O$ which does not pass through any cone points; such a curve exists for any orbifold other than those appearing in the above list. 

\begin{theorem}\label{FuchNotTri}
Let $G=\ofg[O]$ be a residually $p$ Fuchsian group, where $O$ is a hyperbolic 2-orbifold that is orientable when $p\neq 2$. Suppose further that $O$ is hierarchical. Then $G$ is conjugacy $p$-separable.
\end{theorem}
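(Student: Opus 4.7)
The plan is to induct on the hierarchy of $O$ (as constructed in the discussion following Definition~\ref{Hakenorb}) and, at each inductive step, apply the combination theorem~\ref{CpScombination} to the splitting of $G = \ofg[O]$ that arises. A single stage of the hierarchy expresses $G$ as an amalgamated free product or HNN extension whose edge groups are trivial, $\Z/2$, $\Z$, or $\Z/2\ast\Z/2$, and whose vertex groups are orbifold fundamental groups of smaller hierarchical orbifolds (to which the inductive hypothesis applies) or, at the base, copies of $\Z$, $\Z/2\ast\Z/2$, or finite $p$-groups. Conjugacy $p$-separability of the base cases is easy: finite $p$-groups and $\Z$ are trivial, and $\Z/2\ast\Z/2$ (which is residually $p$ only when $p=2$) can be checked by hand since it is virtually cyclic.

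For the inductive step I verify in turn the hypotheses of Theorem~\ref{CpScombination}. $p$-efficiency of the splitting follows, after passing (when $p=2$) to the reflectorless index-$2$ cover, from Proposition~\ref{surfacesplitting}; the $\Z/2$-edge stages are handled by an analogous direct argument, using that they come from cutting along a quotient of an interval by a reflection. Double-coset $p$-separability of edge groups in vertex groups is given by Corollary~\ref{FuchsDblCoset} in the $\Z$-edge case and is immediate in the smaller cases. Edge groups are conjugacy $p$-distinguished in vertex groups by Proposition~\ref{FuchsCp-D} for $\Z$ and Lemma~\ref{reflectors2-dist} for $\Z/2$, with the $\Z/2\ast\Z/2$ case following by combining these. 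Finally, both 2-acylindricity of the action of $\proP{G}$ on the standard pro-$p$ tree and the intersection condition $\bar G_e \cap \bar G_f = \proP{G_e \cap G_f}$ follow from (or extend) Lemma~\ref{orbbdypro-p}: after passing to a sufficiently deep $p$-power cover the local splitting becomes a free pro-$p$ product of cyclic groups, whose standard tree has trivial edge stabilisers, from which the acylindricity and trivial intersection descend to $G$.

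The main obstacle, I expect, is the bookkeeping when reflector curves are present, producing edge groups $\Z/2$ and $\Z/2\ast\Z/2$. Lemma~\ref{orbbdypro-p} and Proposition~\ref{FuchsCp-D} are stated only for cyclic peripheral subgroups, so extending the intersection and acylindricity conclusions will require either first passing to the reflectorless cover (while tracking how the splitting behaves there) or establishing analogous pro-$p$ intersection statements directly for reflector subgroups. A secondary technical point is to check, at each inductive step, that the smaller orbifolds produced by the hierarchy remain hierarchical, so the inductive hypothesis genuinely applies.
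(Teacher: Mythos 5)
For reflectorless orbifolds your plan coincides with the paper's: decompose along the hierarchy (in fact a single step suffices --- if $O$ has genuine boundary then $G$ is already a free product of free groups and finite $p$-groups and Corollary \ref{freeprodCpS} applies; otherwise one cut along an essential simple closed curve reduces to that case) and verify the hypotheses of Theorem \ref{CpScombination} exactly as you list them, via Proposition \ref{surfacesplitting}, Corollary \ref{FuchsDblCoset}, Proposition \ref{FuchsCp-D} and Lemma \ref{orbbdypro-p}. That part is sound, and since reflector curves cannot occur when $p\neq2$, it disposes of every case except $p=2$ with reflector boundary.

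The gap is in that remaining case, and it is exactly the obstacle you flag at the end without resolving. Running Theorem \ref{CpScombination} on a splitting with edge group $\Z/2\ast\Z/2$ requires: (a) that $\Z/2\ast\Z/2$ be conjugacy $2$-distinguished in the vertex groups --- a statement about the union of the conjugates of \emph{all} its elements, including the infinite-order translations, which does not follow by ``combining'' Lemma \ref{reflectors2-dist} with Proposition \ref{FuchsCp-D}; (b) acylindricity and the condition $\bar G_e\cap\bar G_f=\proP{G_e\cap G_f}$ for such edges --- your proposed mechanism (pass to a deep $2$-power cover where the splitting becomes a free product with trivial edge stabilisers) fails here, because an arc with endpoints on reflector curves lifts to an essential \emph{circle} in the reflectorless cover, so these edge stabilisers stay infinite in every finite-index subgroup; and (c) $p$-efficiency and double-coset separability for these edges, for which no analogue of Propositions \ref{bdycptssep} and \ref{surfacesplittingamalg} is established. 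The paper deliberately avoids all of this (it explicitly remarks that the reflector-boundary hierarchy is not used in the sequel): instead it passes to the canonical reflectorless index-$2$ cover $G'=\ofg[O']$, applies the reflectorless case to $G'$, and finishes by a direct case analysis --- elements of $G'$ are handled by Lemma \ref{Lem1Ste}, order-$2$ elements of $G\smallsetminus G'$ by Lemma \ref{reflectors2-dist}, and an infinite-order $g_1\in G\smallsetminus G'$ by observing that $g_1^2$ and $g_2^2$ are conjugate in $\proP[2]{G'}$, hence in $G'$, and that an infinite-order element of a Fuchsian group has at most two square roots, which are separated by the orientation homomorphism $G\to\Z/2$. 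To salvage your inductive scheme you would have to prove new pro-$2$ lemmas for dihedral edge groups; the cover-and-square argument is considerably cheaper.
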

\begin{proof}
We note that each splitting of $G$ given by the above hierarchies satisfies the conditions of Theorem \ref{CpScombination}. First consider the case when $O$ has no reflector curves; this covers all cases when $p\neq 2$. When $O$ has (genuine) boundary the result follows from Corollary \ref{freeprodCpS} since then we may decompose $G$ as a suitable free product of free groups and $p$-groups. Otherwise we have a splitting of $G$ along a simple closed curve as an amalgamated free product or HNN extension of Fuchsian groups with (genuine) boundary, which are conjugacy $p$-separable. Passing to a regular cover of $O$ which is a surface, the splittings along lifts of $l$ are $p$-efficient by Proposition \ref{surfacesplitting}; hence the splitting of $G$ is $p$-efficient. The action on the standard pro-$p$ tree of the splitting is 1-acylindrical by Lemma \ref{orbbdypro-p}. The remaining conditions 1, 2, 3 in Theorem \ref{CpScombination} hold by Corollary \ref{FuchsDblCoset}, Proposition \ref{FuchsCp-D}, and Lemma \ref{orbbdypro-p} respectively. Hence we may apply Theorem \ref{CpScombination} to conclude that $G$ is conjugacy $p$-separable.

Now let $p=2$ and suppose that $O$ has reflector curves. Let $O'$ be the canonical reflectorless degree 2 cover of $O$ obtained by doubling $O$ along its reflector curves and replacing any corner reflectors by cone points. Let $G'=\ofg[O']$. Note that $O'$ is a hierarchical orbifold. Let $g_1, g_2\in G$ be conjugate in the pro-2 completion \proP[2]{G}. If $g_1\in G'$ then $g_1$ is conjugacy 2-distinguished in $G'$, hence in $G$ by Lemma \ref{Lem1Ste} below so we are done. So suppose $g_1$  (hence $g_2$) is in $G\smallsetminus G'$. If $g_1$ has order 2 then since the only order 2 elements of $G\smallsetminus G'$ are in isotropy groups of reflector curves we are done by Lemma \ref{reflectors2-dist}.

So suppose $g_1$ is infinite order. Let $\gamma\in \proP[2]{G}$ be such that $g_1=g_2^\gamma$. Conjugating $g_2$ by an element $\tau\in G'$ we may assume that $\gamma$ lies in \proP[2]{G'}. Then $g_1^2$ is conjugate in \proP[2]{G'} to $g_2^2$; since $G'$ is conjugacy 2-separable, they are conjugate in $G'$. After a conjugation by an element of $G'$ we may thus assume $g_1^2=g_2^2$. Any infinite order element of a Fuchsian group has at most two square roots, differing by a reflection. So either $g_1=g_2$ as required or one of $g_1$, $g_2$ is orientation preserving and the other is orientation reversing. in the latter case have different images under the orientation homomorphism $G\to\Z/2$, and so cannot be conjugate in \proP[2]{G}. This concludes the proof.  
\end{proof}

The extension of this to all Fuchsian groups does not follow immediately, since conjugacy separability is not a commensurability invariant (see \cite{Gor86,CZ09,MM12}). In what follows we remind that reader that `open subgroups $H\sbgp[p] G$' are those subgroups of $G$ with index a power of $p$ such that $H$ contains some normal subgroup of $G$ with index a power of $p$. Note that $G$ induces the full pro-$p$ topology on such an $H$, and remark that not all subgroups with index a power of $p$ are necessarily open (for instance a symmetric group $S_{p-1}\sbgp[p] S_p$ for $p\geq 5$).

\begin{lem}[cf Lemma 1 of \cite{Ste70}]\label{Lem1Ste}
Let $g\in G$, and suppose that $H\sbgp[p] G$ is open in $G$ and contains $g$. If $g$ is conjugacy $p$-distinguished in $H$, then it is conjugacy $p$-distinguished in $G$.
\end{lem}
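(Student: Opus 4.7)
The plan is to mimic Stebe's original argument in the pro-$p$ setting, using that the openness of $H$ both makes $H$ topologically $p$-embedded and gives only finitely many ($p$-power many) cosets of $H$ in $G$ to consider. Fix $x\in G$ not conjugate to $g$ in $G$; we must produce a finite $p$-group quotient of $G$ in which the images of $x$ and $g$ are not conjugate.

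First, let $t_1,\ldots,t_n$ be left coset representatives for $H$ in $G$, and set $x_i=t_i^{-1}xt_i$. Let $I=\{i : x_i\in H\}$. For each $i\in I$ the element $x_i\in H$ is not conjugate in $G$ to $g$ (as it is a $G$-conjugate of $x$), so in particular is not conjugate to $g$ in $H$. Since $g$ is conjugacy $p$-distinguished in $H$, we may choose $N_i\nsgp[p]H$ whose quotient map $\phi_i\colon H\to H/N_i$ separates the conjugacy class of $x_i$ from that of $g$.

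Next I would assemble these $N_i$ into a single normal $p$-power-index subgroup of $G$. Because $H$ is open, it contains some $K\nsgp[p]G$; because $H$ is consequently topologically $p$-embedded in $G$, for each $i\in I$ there exists $M_i\nsgp[p]G$ with $M_i\cap H\le N_i$. Set
\[ M = K\cap \bigcap_{i\in I} M_i \nsgp[p] G. \]
Then $M\le K\le H$, so $M=M\cap H\le N_i$ for every $i\in I$.

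Finally I would verify that the quotient $\pi\colon G\to G/M$ achieves the separation. Suppose some $s\in G$ satisfied $s^{-1}xs\equiv g\pmod{M}$. Write $s=t_ih$ with $h\in H$; then $h^{-1}x_ih\equiv g\pmod{M}$. Since $g\in H$ and $M\le H$, this congruence forces $h^{-1}x_ih\in H$, hence (conjugating back by $h\in H$) $x_i\in H$ and so $i\in I$. But then reducing modulo $N_i\supseteq M$ gives $\phi_i(h)^{-1}\phi_i(x_i)\phi_i(h)=\phi_i(g)$ in $H/N_i$, contradicting our choice of $N_i$. Hence $\pi(x)$ is not conjugate to $\pi(g)$ in the finite $p$-group $G/M$, as required.

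The only subtle point is Step~3, where one has to be sure that the bookkeeping produces a subgroup simultaneously normal in $G$, of $p$-power index, and inside each of the finitely many $N_i$; but this is exactly what ``open'' (rather than just finite $p$-power index) delivers, via the remark in the preliminaries that openness implies full induced pro-$p$ topology. Everything else is routine coset chasing.
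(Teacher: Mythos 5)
Your proof is correct and is essentially the paper's argument with the topology unpacked: the paper simply writes $g^G=\bigcup_i (g^H)^{g_i}$ over coset representatives and observes that $g^H$, being closed in the open (hence topologically $p$-embedded and closed) subgroup $H$, is closed in $G$, so the finite union of its translates is closed. Your explicit construction of the separating quotient $G/M$ is exactly what that one-line topological statement encodes, so there is nothing to add.
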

\begin{proof}
If $\{g_1,\ldots, g_n\}$ is a complete set of right coset representatives of $H$ in $G$, then 
\[ g^G = \bigcup_{i=1}^n (g^H)^{g_i} \]
where superscripts denote conjugation. By assumption $g^H$ is closed in $H$, hence in $G$; thus since $g^G$ is finite union of translates of $g^H$, the conjugacy class $g^G$ is closed in $G$ and $g$ is conjugacy $p$-distinguished in $G$.
\end{proof}
\begin{prop}[Theorem 3.9 of \cite{Ste72}]\label{inforderCpD}
Let $G$ be a group containing a free group or a surface group $F\nsgp[p] G$. Then elements of infinite order in $G$ are conjugacy $p$-distinguished.
\end{prop}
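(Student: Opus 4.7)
The plan is to adapt Stebe's argument \cite{Ste72} from the residually finite to the residually $p$ setting. Fix $g\in G$ of infinite order and $h\in G$ not conjugate to $g$; we seek a finite $p$-group quotient of $G$ separating the conjugacy classes. Since $F$ is residually $p$ and has $p$-power index in $G$, the ambient group $G$ is itself residually $p$, so embeds in $\proP{G}$. A standard compactness argument in $\proP{G}$ reduces the task to proving the following: if $g,h\in G$ with $g$ of infinite order are conjugate in $\proP{G}$, then they are conjugate in $G$ itself.

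Suppose $\gamma g\gamma^{-1}=h$ for some $\gamma\in\proP{G}$. Because $G/F$ is already a finite $p$-group it coincides with $\proP{G}/\proP{F}$, whence $\gamma=x\delta$ with $x\in G$ and $\delta\in\proP{F}$. Replacing $h$ by the $G$-conjugate $x^{-1}hx$ we may assume $\delta\in\proP{F}$, and then $g$ and $h$ lie in the same $F$-coset. Setting $n:=[G:F]$, both $g^n,h^n$ lie in $F$ and are conjugate via $\delta$ in $\proP{F}$. Since $F$ is conjugacy $p$-separable---classically for free groups and by Paris \cite{Par09} for surface groups---it follows that $g^n$ and $h^n$ are $F$-conjugate, say $h^n=fg^nf^{-1}$. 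Replacing $h$ by $f^{-1}hf$ reduces us to the case $g^n=h^n$ in $G$.

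It remains to show $h=g$ in this final configuration. The centraliser $C:=C_G(g^n)$ is virtually cyclic: $C\cap F=C_F(g^n)$ is infinite cyclic (centralisers of non-trivial elements in free and surface groups are cyclic), and $[C:C\cap F]$ divides $n$, a $p$-power. Write $\langle c\rangle:=C_F(g^n)$. The conjugation action of $C$ on $\langle c\rangle$ factors through $\Aut(\langle c\rangle)=\Z/2$; if $g$ were to invert $c$, then since $C/\langle c\rangle$ is a finite $p$-group there would exist $m$ with $g^{2m}=c^j$ for some $j\neq 0$ (using that $g$ has infinite order), and this power commutes with $g$, yielding $c^j=gc^jg^{-1}=c^{-j}$ and hence $c^{2j}=1$, contradicting the infinite order of $c$. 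Therefore $[c,g]=1$. Meanwhile $\delta$ centralises $g^n$ in $\proP{F}$, and the centraliser of a non-trivial element of $F$ in $\proP{F}$ is the procyclic closure of its abstract centraliser, a standard fact for free and surface pro-$p$ groups. Thus $\delta\in\overline{\langle c\rangle}$; writing $\delta=\lim_k c^{i_k}$ and using continuity of conjugation together with $[c,g]=1$, we obtain $h=\delta g\delta^{-1}=\lim_k c^{i_k}gc^{-i_k}=g$, as required.

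The main subtlety is the commutation $[c,g]=1$ inside the virtually cyclic centraliser, which rests on the $\Aut(\Z)=\Z/2$-rigidity described above; the remaining ingredients (conjugacy $p$-separability of $F$, the compactness reduction to pro-$p$ conjugacy, and the identification of centralisers in $\proP{F}$ with closures of abstract centralisers) are standard.
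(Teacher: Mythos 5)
Your argument is, in substance, a pro-$p$-completion rendering of Stebe's original proof, which is exactly what the paper intends: its own ``proof'' is the one-line assertion that the argument of \cite{Ste72} goes through verbatim once all finite-index subgroups are taken open of $p$-power index. The skeleton --- reduce the conjugator into $\proP{F}$, pass to $g^n,h^n\in F$, invoke conjugacy $p$-separability of $F$ to arrange $g^n=h^n$, then analyse the virtually cyclic centraliser $C_G(g^n)$ --- is Stebe's, and your treatment of the $\Aut(\Z)=\Z/2$ issue inside that centraliser is correct. So you have reconstructed the intended proof rather than found a new route.

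One ingredient deserves more care than the label ``standard fact'' suggests: the identity $C_{\proP{F}}(g^n)=\overline{C_F(g^n)}$ is the technical heart of the whole argument, since without it the final step ($\delta\in\overline{\langle c\rangle}$, hence $h=g$) does not close. Procyclicity of centralisers in free pro-$p$ and Demushkin groups is standard, but the further claim that $\overline{\langle c\rangle}$ is all of $C_{\proP{F}}(g^n)$ amounts to showing that the primitive root $c$ is not a $p$-th power in $\proP{F}$; for free groups this is classical, but for surface groups it is essentially the main content of \cite{Par09} and should be cited as such --- note that $c$ need not be represented by a simple closed curve, so Proposition \ref{curvetopemb} does not supply it. Two smaller caveats: centralisers of non-trivial elements in surface groups are cyclic only in the hyperbolic case (the torus and Klein bottle must be routed through Lemma \ref{VirtAbCpS}, as the paper does); and the opening inference that $F$ residually $p$ with $F\nsgp[p]G$ forces $G$ to be residually $p$ is not formal --- it holds here because every finite subgroup of $G$ embeds in the $p$-group $G/F$, so the Higman--Chatzidakis criteria (Theorems \ref{Higman} and \ref{Chatz}) apply --- but it is needed for the embedding $G\hookrightarrow\proP{G}$ on which your compactness reduction rests, so it should be said.
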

\begin{proof}
The proof is identical with that of \cite{Ste72}, noting that all finite index subgroups constructed there are open and have index a power of $p$ in the present situation.
\end{proof}
\begin{lem}[Lemma 3.8 of \cite{Ste72}]\label{VirtAbCpS}
Let $G$ be a group, $A\nsgp[p] G$. Suppose that $A$ is a residually $p$ Abelian group. Then $G$ is conjugacy $p$-separable.
\end{lem}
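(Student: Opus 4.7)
The plan is, given $g\in G$ and $x\in G$ with $x\not\sim g$, to produce a finite $p$-group quotient of $G$ in which their images remain non-conjugate. Since $G/A$ is itself a finite $p$-group, the case when the images of $g$ and $x$ are non-conjugate in $G/A$ is immediate via the projection $\pi\colon G\to G/A$. Otherwise, after replacing $x$ by a conjugate we may assume $x=ga$ for some $a\in A$.

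Introduce the subgroup $N=\pi^{-1}(C_{G/A}(\pi g))=\{h\in G : h^{-1}gh\in gA\}$ and the set-map $\tau\colon N\to A$ with $\tau(h)=g^{-1}h^{-1}gh$; then $x=ga$ is conjugate to $g$ in $G$ precisely when $a\in\tau(N)$, so the hypothesis gives $a\notin\tau(N)$. Let $\psi\in\Aut(A)$ be conjugation by $g^{-1}$; since $g^{p^k}\in A$ for some $k$ and $A$ is abelian, $\psi$ has order dividing $p^k$. Choose coset representatives $h_1,\ldots,h_m$ for $A$ in $N$ and set $c_i=\tau(h_i)$. A short calculation using commutativity of $A$ gives $\tau(h_ib)=c_i+(1-\psi)(b)$ (in additive notation), so
\[\tau(N)=\bigcup_{i=1}^m \bigl(c_i+(1-\psi)(A)\bigr)\]
is a finite union of cosets of the subgroup $(1-\psi)(A)\leq A$.

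The crux is to show $A/(1-\psi)(A)$ is residually $p$. My plan is a Herbrand-quotient-style argument using the norm $M=1+\psi+\psi^2+\cdots+\psi^{p^k-1}$: since $M(1-\psi)=0$, $M$ descends to $\bar M\colon A/(1-\psi)(A)\to A$. The kernel $\ker(M)/(1-\psi)(A)$ is annihilated by $p^k$ and is a sub-quotient of the finitely generated residually $p$ abelian group $A$, hence a finite $p$-group. The image $\bar M(A/(1-\psi)(A))$ is a subgroup of $A$, hence itself finitely generated residually $p$ abelian. Therefore $A/(1-\psi)(A)$ is a finitely generated abelian extension of a residually $p$ group by a finite $p$-group, so has $p$-primary torsion, and is residually $p$.

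Consequently $\tau(N)$ is closed in the pro-$p$ topology of $A$, and there is a characteristic subgroup $B\sbgp[p] A$ with $a\notin\tau(N)+B$. Since $B$ is characteristic in $A\nsgp G$ it is normal in $G$, and $G/B$ is a finite $p$-group (an extension of $G/A$ by $A/B$, both finite $p$-groups). Because $B$ is $\psi$-invariant, the calculation of $\tau$ descends to $G/B$ and shows that $a\notin\tau(N)+B$ is exactly the condition that the images of $g$ and $x$ are non-conjugate in $G/B$, completing the argument. The main obstacle is the residual-$p$ property of $A/(1-\psi)(A)$, which crucially uses the $p$-power order of $\psi$; the rest is essentially coset book-keeping.
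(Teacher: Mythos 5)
Your argument is correct. The paper offers no proof of its own here---it simply asserts that Stebe's proof of Lemma 3.8 of \cite{Ste72} works without modification---and what you have written is exactly that argument carried out in the pro-$p$ setting: the conjugates of $g$ inside $gA$ form a finite union of cosets of $(1-\psi)(A)$, and the one genuinely pro-$p$ point is that $A/(1-\psi)(A)$ is residually $p$, which your norm-map computation establishes correctly from the fact that $\psi$ has $p$-power order (precisely where the hypothesis $A\nsgp[p]G$, rather than mere finite index, is used). The remaining steps---finite generation of $A$, passage to a characteristic $B\sbgp[p]A$, and the translation of non-conjugacy in the finite $p$-group $G/B$ into the condition $a\notin\tau(N)+B$---all check out.
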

\begin{proof}
Again the proof in \cite{Ste72} works with no modification.
\end{proof}
\begin{theorem}\label{FuchsCpS}
Let $O$ be a 2-orbifold, and suppose that $G=\ofg[O]$ is residually $p$ and that $O$ is orientable when $p\neq 2$. Then $G$ is conjugacy $p$-separable.
\end{theorem}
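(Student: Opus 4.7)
The plan is to reduce every case to Theorem \ref{FuchNotTri} via a case analysis on $\chi(O)$ and on whether $O$ is hierarchical.

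First I would dispose of the non-hyperbolic cases. If $\chi(O) > 0$ then $G$ is finite, and being residually $p$ it must be a finite $p$-group; such a group is trivially conjugacy $p$-separable, as every subset is closed in the (discrete) pro-$p$ topology. If $\chi(O) = 0$ then $G$ is virtually $\Z^2$, and for each residually $p$ Euclidean orbifold on the finite list (torus, Klein bottle only when $p=2$, $(3,3,3)$ only when $p=3$, $(2,4,4)$, $(2,2,2,2)$ and projective plane $(2,2)$ only when $p=2$) one exhibits a normal subgroup $A \nsgp[p] G$ with $A \iso \Z^2$; Lemma \ref{VirtAbCpS} then yields conjugacy $p$-separability. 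I may therefore assume $\chi(O) < 0$. If $O$ is hierarchical then Theorem \ref{FuchNotTri} applies directly, so the interesting remaining case is that $O$ is one of the small exceptional hyperbolic orbifolds of Definition \ref{Hakenorb}.

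In that remaining case my approach is to pass to a regular $p$-power cover $\tilde O \to O$ whose fundamental group $H = \pi_1 \tilde O$ is a hyperbolic surface group; such a cover exists by the preliminary lemma on $\Z/p$-orientable orbifolds applied to a suitable characteristic subgroup. Then $\tilde O$ is hierarchical, so Theorem \ref{FuchNotTri} gives that $H$ is conjugacy $p$-separable. I would now lift conjugacy $p$-distinguishedness element-wise: for $g \in H$, Lemma \ref{Lem1Ste} applies directly; for $g \in G \smallsetminus H$ of infinite order, Proposition \ref{inforderCpD} applies with $F = H$. The only remaining case is finite-order $g \in G \smallsetminus H$.

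For such $g$, which necessarily has $p$-power order, the torsion conjugacy classes of $G$ form a finite list indexed by $G$-orbits of cone-point (or reflector, when $p=2$) isotropy groups together with a nontrivial element of each cyclic isotropy group. I would construct a single finite $p$-group quotient $\phi: G \to Q$ in which representatives of distinct torsion classes remain non-conjugate, by combining the $p$-abelianization of $G$ (which in the small exceptional triangle-group cases is an explicit finite $p$-group determined by the signature) with a sufficiently deep lower central $p$-quotient to preserve the true orders of the isotropy generators; for the $p=2$ reflector involutions, Lemma \ref{reflectors2-dist} handles the separation directly. Then any $g'$ not $G$-conjugate to $g$ is distinguished: if $g'$ is infinite order, deepen $\phi$ along the lower central $p$-series so that $\phi(g')$ has strictly greater order than $\phi(g)$; if $g'$ is torsion, $\phi$ itself already separates them. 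The main obstacle is constructing $\phi$ when two non-conjugate torsion elements sit at cone points of equal isotropy order: for these small exceptional orbifolds the $p$-abelianization alone may fail to distinguish them, and one must verify by explicit computation that combining abelianization with a shallow nilpotent $p$-quotient suffices, appealing to the residual $p$-ness of $G$ to produce enough cone-point-distinguishing $p$-quotients.
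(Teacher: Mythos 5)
Your reduction to the hyperbolic non-hierarchical case is sound and matches the paper (non-hyperbolic orbifolds via Lemma \ref{VirtAbCpS}, hierarchical ones via Theorem \ref{FuchNotTri}, and infinite-order elements via Proposition \ref{inforderCpD}). But the one case that actually requires an idea --- a finite-order element $g$ of a non-hierarchical hyperbolic orbifold group, e.g.\ a triangle group $\Delta(p^a,p^b,p^c)$ --- is exactly where your argument stops being a proof. Because you descend to a torsion-free surface cover $\tilde O$, the element $g$ does not lie in $H=\pi_1\tilde O$, so Lemma \ref{Lem1Ste} is unavailable, and you are forced into the ``construct an explicit $p$-group quotient separating torsion classes'' route. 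You do not carry this out: distinguishing $g$ from an arbitrary non-conjugate element of the same finite order (not merely from the other cone-point generators) in some nilpotent $p$-quotient is precisely the hard content of the theorem in this case, and ``one must verify by explicit computation that combining abelianization with a shallow nilpotent $p$-quotient suffices'' is an acknowledgement of the gap, not a closure of it.

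The paper avoids this entirely by not descending all the way to a torsion-free subgroup. Since $G$ is residually $p$ and $\gp{g}$ is a finite $p$-group, one takes a $p$-group quotient $\phi:G\to P$ injective on $\gp{g}$ with $|P|$ large, and sets $H=\phi^{-1}(\gp{\phi(g)})$. This $H$ is an \emph{open} subgroup of $p$-power index which still \emph{contains} $g$, and by multiplicativity of the Euler characteristic $\oec(H)=[G:H]\,\oec(O)\le -3$ once the index is large; since every non-hierarchical orbifold has $\oec>-3$, $H$ is the fundamental group of a hierarchical orbifold, hence conjugacy $p$-separable by Theorem \ref{FuchNotTri}, and Lemma \ref{Lem1Ste} transfers conjugacy $p$-distinguishedness of $g$ from $H$ up to $G$. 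That intermediate subgroup $\phi^{-1}(\gp{\phi(g)})$ is the missing idea in your proposal; without it (or a completed explicit computation for each exceptional signature) the torsion case remains open.
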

\begin{proof}
If $O$ is not hyperbolic then $G$ has an abelian subgroup $A\nsgp[p] G$, so that we are done by Lemma \ref{VirtAbCpS}. By Theorem \ref{FuchNotTri} we have reduced to the case of those orbifolds appearing in the statement of Theorem \ref{FuchNotTri}. Take $g\in G$; we must show that $g$ is conjugacy $p$-distinguished. By Proposition \ref{inforderCpD}, without loss of generality $g$ is finite order, say $p^n$. Since $G$ is residually $p$, there are arbitrarily large $p$-group quotients $\phi:G\to P$ into which \gp{g} injects. Choose $|P|$ sufficiently large that $H =\phi^{-1}(\gp{\phi(g)})$ has rational Euler characteristic at most $-3$. Considering Definition \ref{Hakenorb} we see that all non-hierarchical 2-orbifolds have Euler characteristic strictly greater than -3. So $H$ is the fundamental group of a hierarchical 2-orbifold. Then $H$ is conjugacy $p$-separable by Theorem \ref{FuchNotTri}, so $g$ is conjugacy $p$-distinguished in $H$. Note that $H\sbgp[p] G$ is an open subgroup of $G$ containing $g$, hence $g$ is conjugacy distinguished in $G$ by Lemma \ref{Lem1Ste}. So $G$ is conjugacy $p$-separable.
\end{proof}
Given Theorem \ref{FuchsCpS} the next two theorems follow from similar results in \cite{Mar06} by simply checking that all finite-index subgroups constructed can be chosen to be normal of index a power of $p$.
\begin{theorem}[Theorem 3.7 of \cite{Mar06}]
Let $G$ contain an orientable surface subgroup $\pi_1 \Sigma\nsgp[p] G$. Then $G$ is $p$-conjugacy separable.
\end{theorem}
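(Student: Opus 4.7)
The plan is to follow Martino's proof of the analogous result for ordinary conjugacy separability, verifying at each step that the finite-index subgroups produced can be chosen normal of $p$-power index, and invoking Theorem \ref{FuchsCpS} (conjugacy $p$-separability of 2-orbifold groups) wherever the original proof invokes its conjugacy-separable analogue. As a preliminary observation, $G$ is automatically residually $p$: characteristic $p$-power index subgroups $N$ of $\pi_1\Sigma$ are normal in $G$, and each quotient $G/N$ is a finite $p$-group since it is an extension of the finite $p$-group $\pi_1\Sigma/N$ by the finite $p$-group $G/\pi_1\Sigma$.

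Let $g\in G$ be the element we aim to conjugacy $p$-distinguish. Since $\pi_1\Sigma$ is torsion-free of $p$-power index in $G$, the element $g$ either has infinite order or order a power of $p$. In the first case Proposition \ref{inforderCpD} applies directly, taking the normal residually $p$ surface subgroup to be $\pi_1\Sigma$ itself. So suppose $g$ has order $p^n$. Choose a characteristic subgroup $N$ of $\pi_1\Sigma$ of $p$-power index, deep enough that the corresponding regular cover $\Sigma_N\to \Sigma$ has large negative Euler characteristic, and set $H=N\cdot\gp{g}$. Because $N$ is characteristic in $\pi_1\Sigma$ and $\pi_1\Sigma$ is normal in $G$, the subgroup $N$ is normal in $G$, so $H$ is indeed a subgroup; moreover $H\supseteq N\nsgp[p] G$, so $H$ is open in the pro-$p$ topology on $G$ and $[G:H]$ is a power of $p$.

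The subgroup $H$ is the orbifold fundamental group of the quotient $O_H=\Sigma_N/\gp{\bar g}$, where $\bar g\in H/N$ acts on $\Sigma_N$; the cone orders of $O_H$ all divide $p^n$. When $p\neq 2$ the orbifold $O_H$ is orientable, because $\bar g$ has odd prime-power order and therefore acts orientation-preservingly on the orientable surface $\Sigma_N$. By the same argument as for $G$, the group $H$ is residually $p$, so Theorem \ref{FuchsCpS} applies and $H$ is conjugacy $p$-separable. In particular $g$ is conjugacy $p$-distinguished in $H$, and since $H\sbgp[p] G$ is an open subgroup containing $g$, Lemma \ref{Lem1Ste} transfers this property from $H$ up to $G$.

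The main point requiring care is the very step that the hint flags: each time Martino's argument invokes a finite-index subgroup or residual finiteness, one must substitute a $p$-power-index subgroup or the residually $p$ property. In our setup this is handled by always choosing $N$ characteristic in $\pi_1\Sigma$ of $p$-power index, so that $N\nsgp[p] G$ and $H=N\cdot\gp{g}$ is automatically open of $p$-power index in $G$. Beyond this translation, the realisation of $g$ inside a 2-orbifold fundamental group covered by Theorem \ref{FuchsCpS} is routine, and no genuinely new ingredient is needed.
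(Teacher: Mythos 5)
The paper gives no written argument for this theorem: it simply defers to Theorem 3.7 of \cite{Mar06}, remarking that once Theorem \ref{FuchsCpS} is available one need only check that the finite-index subgroups occurring in Martino's proof can be taken normal of $p$-power index. Your proposal is a reasonable reconstruction of that strategy, and the preliminary reductions are correct: $G$ is residually $p$; torsion elements have $p$-power order; infinite-order elements are handled by Proposition \ref{inforderCpD}; and Lemma \ref{Lem1Ste} correctly transfers conjugacy $p$-distinguishedness from the open subgroup $H=N\gp{g}$ up to $G$.

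The gap is the sentence identifying $H=N\gp{g}$ with the orbifold fundamental group of $\Sigma_N/\gp{\bar g}$. This identification needs (a) Nielsen realisation of the outer action of $H/N$ on $N$ together with $Z(N)=1$ to pin down the extension class, and, more seriously, (b) \emph{faithfulness} of that outer action, which you nowhere verify and which can fail. If some power $g^{p^j}\notin N$ acts on $N$ by an inner automorphism $c_{n_0}$, then $g^{p^j}n_0^{-1}$ fixes $N$ pointwise, hence fixes all of $\pi_1\Sigma$ pointwise (an automorphism of a hyperbolic surface group restricting to the identity on a finite-index subgroup is the identity), so $H$ contains the non-trivial finite normal subgroup $H\cap C_G(\pi_1\Sigma)$. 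A hyperbolic 2-orbifold group has no non-trivial finite normal subgroup, so $H$ is not within the scope of Theorem \ref{FuchsCpS} and the argument halts. This case is not vacuous for the paper: the theorem is invoked through Lemma \ref{p-centralext} on the central extensions $G/\gp{h^{p^k}}$ of Fuchsian groups by $\Z/p^k$, where the central $\Z/p^k$ meets the surface subgroup trivially and therefore lies in $C_G(\pi_1\Sigma)$; an element $g$ combining a cone-point generator with a central torsion element of order $p^k$, $k\geq 2$, produces exactly the bad situation above. A complete proof must handle the finite normal subgroup $C_G(\pi_1\Sigma)$ separately (for instance by passing to the effective quotient and supplementing with a centraliser argument of the kind used in the Seifert-fibred theorem); this step is genuinely missing from your write-up.
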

\begin{lem}[Lemma 4.2 of \cite{Mar06}]\label{p-centralext}
Let $H$ be a group containing a normal $p$-power index orientable surface subgroup. Suppose $G$ is a central extension of $H$ by a finite $p$-group. Then $G$ contains a normal orientable surface subgroup of index a power of $p$ and hence is conjugacy $p$-separable.
\end{lem}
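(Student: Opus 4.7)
The plan is to adapt Martinez's proof of Lemma~4.2 in \cite{Mar06}, tracking $p$-power indices at each step. Write $\pi\colon G\to H$ for the central extension with kernel $K$, a finite $p$-group, and let $S=\pi_1\Sigma\nsgp[p] H$ be the hypothesised orientable surface subgroup. Its preimage $\tilde S=\pi^{-1}(S)$ is normal in $G$ of $p$-power index and fits into a central extension $1\to K\to \tilde S\to S\to 1$. The aim is to produce inside $\tilde S$ an orientable surface subgroup of $G$ that is normal in $G$ and of $p$-power index; once this is done the preceding theorem immediately yields conjugacy $p$-separability of $G$.

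The classifying class $\omega\in H^2(S;K)$ of this central extension is trivial when $\Sigma$ has non-empty boundary (since then $S$ is free) and otherwise lies in $H^2(\Sigma;K)\iso K$; in either case $\omega$ has order a power of $p$. Since $S$ is residually $p$, it admits a characteristic subgroup $S_0\nsgp S$ of $p$-power index $[S:S_0]$ divisible by the order of $\omega$. Because $S_0$ is characteristic in the normal subgroup $S$ of $H$, it is normal in $H$, and so $\tilde S_0=\pi^{-1}(S_0)$ is normal in $G$ with $p$-power index. The restriction map $H^2(S;K)\to H^2(S_0;K)$ is pullback along the covering $\Sigma_0\to\Sigma$, which on top cohomology is multiplication by $[S:S_0]$, and so annihilates $\omega$. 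Hence the induced central extension $1\to K\to \tilde S_0\to S_0\to 1$ splits, yielding $\tilde S_0\iso K\times S_0$.

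Fix a complement $S_0'\leq \tilde S_0$, so $S_0'\iso S_0$ is an orientable surface subgroup of $\tilde S_0$ of index $|K|$. Since $\tilde S_0\iso K\times S_0$ is residually $p$, it admits a characteristic subgroup $C$ of $p$-power index with $C\leq S_0'$. Then $C$ is a finite-index subgroup of the orientable surface group $S_0'$, hence itself an orientable surface group; $C$ has $p$-power index in $G$; and $C$ is normal in $G$, being characteristic in the normal subgroup $\tilde S_0$. Applying the preceding theorem to $C\nsgp[p] G$ then gives that $G$ is conjugacy $p$-separable.

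The main technical point is arranging that the eventual surface subgroup of $G$ is simultaneously $G$-normal and of $p$-power index, given that the splitting $\tilde S_0\iso K\times S_0$ furnishes only a non-canonical complement $S_0'$. Both issues are resolved by working with characteristic subgroups: $S_0$ characteristic in $S$ ensures $\tilde S_0\nsgp G$, while $C$ characteristic in $\tilde S_0$ ensures $C\nsgp G$.
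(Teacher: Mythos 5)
Your proof is correct and is essentially the argument the paper intends: the paper offers no proof beyond citing Martino's Lemma 4.2 and remarking that one need only check that all finite-index subgroups constructed can be chosen normal of index a power of $p$, and your write-up is exactly that adaptation (kill the extension class in $H^2(S;K)$ by passing to a characteristic cover of degree divisible by its order, split off a complement, and shrink to a characteristic subgroup to regain normality in $G$). No gaps.
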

\begin{theorem}[cf Martino \cite{Mar06}]
Let $G$ be the fundamental group of a \SFS{} which has hyperbolic base orbifold. Assume that $G$ is residually $p$. Then $G$ is conjugacy $p$-separable. 
\end{theorem}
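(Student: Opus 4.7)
The plan is to adapt Martino's approach to the pro-$p$ setting, systematically verifying, as the paper's preceding remarks anticipate, that the finite-index subgroups and quotients arising can be chosen with index a power of $p$. I would first reduce to the case of an orientable base orbifold: for $p$ odd this is automatic from the residual $p$ criterion recalled in the preliminaries, while for $p=2$ and $O$ non-orientable one passes to the orientable double cover to obtain a normal subgroup of index $2$, and subsequently recovers conjugacy $p$-separability of the full group from that of this subgroup by the sort of lifting arguments used in the proof of Theorem \ref{FuchsCpS}. Under this reduction $G$ sits in a central extension
\[ 1 \to \langle h \rangle \to G \to \ofg[O] \to 1, \]
with $h$ the regular fibre generating an infinite cyclic centre, and $\ofg[O]$ a hyperbolic, residually $p$ Fuchsian group containing a normal orientable surface subgroup of $p$-power index.

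The central construction is the family of quotients $G_n := G/\langle h^{p^n}\rangle$. Each $G_n$ is a central extension of $\ofg[O]$ by the finite $p$-group $\Z/p^n$, so by Lemma \ref{p-centralext} it contains a normal $p$-power-index orientable surface subgroup, and is therefore conjugacy $p$-separable by the theorem immediately preceding that lemma. This is an honest improvement over working with $G$ directly, since $G$ typically contains no $p$-power-index surface subgroup (no closed aspherical $3$-manifold does when the Euler number is nonzero). To conclude, take non-conjugate $g_1, g_2 \in G$. If their images in $\ofg[O]$ are non-conjugate, conjugacy $p$-separability of $\ofg[O]$ (Theorem \ref{FuchsCpS}) produces a distinguishing $p$-group quotient which pulls back to $G$. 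Otherwise, after conjugating in $G$, one may assume $g_2 = g_1 h^k$ with $k \neq 0$ (the degenerate case $\bar g_1 = 1$ is trivial since $g_1$ is then central). The centraliser of any nontrivial $\bar g_1$ in $\ofg[O]$ is cyclic, generated by some $\bar y$ with $\bar g_1 \in \langle \bar y\rangle$; writing any chosen lift as $g_1 = y^m h^\alpha$ and using that $h$ is central gives $[y, g_1] = 1$ in $G$, and hence $[x, g_1] = 1$ for any $x$ projecting into $\langle \bar y\rangle$. Consequently the conjugator equation $[x,g_1] \equiv h^k \pmod{\langle h^{p^n}\rangle}$ is unsolvable once $p^n \nmid k$, so $g_1$ and $g_2$ remain non-conjugate in $G_n$ for such $n$, and conjugacy $p$-separability of $G_n$ supplies the required distinguishing $p$-group quotient of $G$.

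The main obstacle is the centraliser computation: a priori one might worry that $[y, g_1] = h^\delta$ with $\delta \neq 0$, in which case $p$-adic discrepancies $k$ with $v_p(k) \ge v_p(\delta)$ but $\delta \nmid k$ could be absorbed into commutators, so that $g_1$ and $g_1 h^k$ would become conjugate in every $G_n$ in spite of being non-conjugate in $G$; conjugacy $p$-separability of the $G_n$ alone would then be insufficient. What rescues us is that $\bar g_1$ already lies in the cyclic centraliser $\langle \bar y\rangle$, so the chosen lifts $y^m h^\alpha$ and $y$ commute exactly in the central extension and $\delta = 0$; this is precisely what makes the finite central quotients detect every nontrivial fibre-direction discrepancy eventually. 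Once this point is in hand, the remainder is a routine assembly of Lemma \ref{p-centralext}, Theorem \ref{FuchsCpS}, and conjugacy $p$-separability of the intermediate central quotients.
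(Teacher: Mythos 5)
Your main argument --- the central extension $1\to\gp{h}\to G\to\ofg[O]\to 1$, the finite central quotients $G/\gp{h^{p^n}}$, Lemma \ref{p-centralext}, and the observation that the cyclic centraliser of $\bar g_1$ in $\ofg[O]$ pulls back to an abelian subgroup of $G$, so that no conjugator can introduce a fibre-direction discrepancy --- is exactly the paper's proof in the case $p\neq 2$ (and for $p=2$ with orientable base). The worry you raise and dispose of, that $[y,g_1]$ might be a nontrivial power of $h$, is the same point the paper makes by noting that the preimage of the centraliser is a copy of $\Z^2$.

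The gap is in your reduction for $p=2$ with non-orientable base orbifold. Conjugacy $p$-separability does not descend from a finite-index subgroup to the ambient group --- the paper stresses precisely this point before Lemma \ref{Lem1Ste}, and that lemma only handles elements $g$ that already lie in the subgroup. For $g\in G\smallsetminus G^{+}$, where $G^{+}$ is the preimage of the orientable double cover of $O$ (equivalently the centraliser of $h$), the ``lifting arguments'' of Theorem \ref{FuchsCpS} do not transfer: there the key facts were that an infinite-order element of a Fuchsian group has at most two square roots differing by a reflection, detected by the orientation homomorphism, whereas here the relevant phenomenon is the relation $g^{-1}hg=h^{-1}$. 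The paper uses this relation to show that such a $g$ is automatically conjugate to $gh^{2k}$ for every $k$, so that any genuine discrepancy $g'=gh^{n}$ has $n$ odd and is then detected in the conjugacy $2$-separable quotient $G/\gp{h^{2}}$ by the same centraliser computation (with $\Z\times\Z/2$ in place of $\Z^2$). Without this step, or some substitute for it, your proof covers only the orientable-base case.
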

\begin{proof}
Suppose first that $p\neq 2$ and let $g, g'$ be non-conjugate elements of $G=\pi_1 M$. Let $h$ denote the homotopy class of a regular fibre of $M$ and let $O$ be the quotient orbifold of $M$, so that we have a central extension 
\[1\to \gp{h} \to G \to \ofg[O]\to 1\] 
If the images of $g,g'$ in \ofg[O] are not conjugate, we are done by Theorem \ref{FuchsCpS}. So suppose $g,g'$ are conjugate in \ofg[O]; after a conjugacy we may assume that $g'=gh^n$ for some $n$. Choose some $k$ such that $p^k>|n|$ and consider the quotient $\phi:G\to G'=G/\gp{h^{p^k}}$. Note that centralisers in Fuchsian groups are cyclic, so that the pre-image of the centraliser of $g$ in \ofg[O] is a copy of $\Z{}^2$; so if $x\in G'$ conjugates $\phi(g)$ to $\phi(gh^m)$ for some $m$, then in fact $x$ commutes with $\phi(g)$ in $G'$, and hence  $\phi(g')$ is not conjugate to $\phi(g)$ in $G'$. By Lemma \ref{p-centralext} $G'$ is conjugacy $p$-separable and we are done.

Now let $p=2$; the difference here is that $O$ may be non-orientable. Let $G^+$ be the index 2 subgroup of $G$ consisting of elements which centralise $h$. If $g\in G^+$ then $g$ is conjugacy $p$-distinguished in $G^+$, hence in $G$ by Lemma \ref{Lem1Ste}. So suppose $g\in G\smallsetminus G^+$ and let $g'\in G$ be a non-conjugate of $g$. Again it suffices to deal with the case $g'=gh^n$. Now, since $g^{-1} h g = h^{-1}$, $g$ is conjugate to $gh^{2k}$ for all $k$; so $n$ is odd. Consider the quotient $\phi:G\to G'=G/\gp{h^{2}} $, which is conjugacy 2-separable. Suppose $x\in G'$ conjugates $\phi(g)$ to $\phi(g')$. Again the centraliser of the image of $G$ in \ofg[O] is cyclic, and the preimage of this group is a copy of $\Z\times\Z/2$ containing $x$. Hence $\phi(g), \phi(g')$ are not conjugate and we are done.
\end{proof}
The restriction to hyperbolic base orbifold in the above theorem was necessary to exclude problems with the geometry Nil, as the following example shows. Note that the three remaining Seifert fibred geometries (\sph{3}, $\sph{2}\times\R$, and $\E^3$) have no such issues as all these groups are finite or virtually abelian and are easily dealt with. 
\begin{example}
We claim that the Heisenberg group $G={\cal H}_3(\Z)$ is not conjugacy $p$-separable for any prime $p$. Suppose $p\neq 2$, the $p=2$ case being similar. We have a presentation 
\[ G = \langle x,y,h\,\big|\, [x,y]=h\text{ central}\rangle\]
By direct calculation, $x^2$ is not conjugate to $x^2 h$; however for any $n$, 
\[ y^{-n}x^2 y^n = x^2 h^{2n}\]
In any $p$-group quotient $\phi:G\to P$, we have $\phi(x^2 h) = \phi(x^2 h^{2n})$ for some $n$, so that the image of $x^2 h$ is always conjugate to the image of $x^2$, proving the claim. Note that the congruence quotients exhibit that ${\cal H}_3(\Z)$ is indeed residually $p$. See \cite{Iva04} for a characterisation of conjugacy $p$-separable nilpotent groups.
\end{example}
\begin{theorem}
Let $G$ be the fundamental group of a $p$-efficient graph manifold in which all \SFS{}s have hyperbolic base orbifold. Then $G$ is conjugacy $p$-separable.
\end{theorem}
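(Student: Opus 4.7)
The plan is to verify the hypotheses of Theorem~\ref{CpScombination} for the JSJ graph of groups $(X,G_\bullet)$ of $M$. The graph of groups is $p$-efficient by assumption. Each vertex group $G_v$ is the fundamental group of a Seifert fibre space with hyperbolic base orbifold; since $G_v$ is closed in $G$ in the pro-$p$ topology (by $p$-efficiency) and $G$ is residually $p$, each $G_v$ is itself residually $p$, hence conjugacy $p$-separable by the preceding theorem on SFS groups. The 2-acylindricity of the action of $\proP{G}$ on the standard pro-$p$ tree is exactly Proposition~\ref{JSJacylpro-p}. It then remains to verify the three edge-group conditions of Theorem~\ref{CpScombination}, at each vertex $v$ with incident edges $e,f$; the edge groups are $\Z^2$ subgroups of $G_v$ arising from the JSJ tori.

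For condition~(1), rewrite the double coset as
\[G_e g G_f=\bigl(G_e\cdot (g G_f g^{-1})\bigr)\,g,\]
so $p$-separability reduces to that of a product of two maximal peripheral subgroups of the SFS group $G_v$, which is Corollary~\ref{SFSDblCoset}. For condition~(3), when $G_e\neq G_f$ the two peripheral subgroups intersect abstractly in the canonical fibre subgroup $Z_v$; Lemma~\ref{bdyintersectpro-p} identifies the intersection of their closures in $\proP{G_v}$ with $\bar Z_v=\proP{Z_v}=\proP{G_e\cap G_f}$, and when $G_e=G_f$ there is nothing to verify.

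The main obstacle is condition~(2): showing that the edge group $G_e$ is conjugacy $p$-distinguished in the vertex group $G_v$. My approach is to use the short exact sequence
\[1\to Z_v\to G_v\to\ofg[O_v]\to 1,\]
where $O_v$ is the hyperbolic base orbifold at $v$, which has non-empty boundary and (by orientability of the 3-manifold) no reflector curves, and whose fundamental group is residually $p$. Because $Z_v$ lies in $G_e$ and in every conjugate of $G_e$, an element $g\in G_v$ is conjugate into $G_e$ if and only if its image $\bar g\in\ofg[O_v]$ is conjugate into the image $D$ of $G_e$ (a maximal peripheral of $\ofg[O_v]$): the forward implication is immediate, and conversely if $\bar g=\bar x\bar d\bar x^{-1}$ with $d\in G_e$, then $g=(xdx^{-1})z$ for some $z\in Z_v\subseteq xG_ex^{-1}$, putting $g$ into the peripheral $xG_ex^{-1}$. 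If $g$ is not conjugate into $G_e$, Proposition~\ref{FuchsCp-D} applied to $\ofg[O_v]$ supplies a finite $p$-group quotient distinguishing $\bar g$ from conjugates of $D$, and pulling back along $G_v\twoheadrightarrow\ofg[O_v]$ produces the required $p$-group quotient of $G_v$ separating $g$ from the union of conjugates of $G_e$.

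With all three conditions verified, Theorem~\ref{CpScombination} yields that $G$ is conjugacy $p$-separable. A minor subtlety at $p=2$ is that $O_v$ may be non-orientable and $Z_v$ correspondingly non-central in $G_v$, but the argument above uses only that $Z_v$ is normal in $G_v$ and contained in every peripheral subgroup, which remain true in that case.
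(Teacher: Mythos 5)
Your proposal is correct and follows essentially the same route as the paper: verify the hypotheses of Theorem \ref{CpScombination} using the SFS theorem for the vertex groups, Proposition \ref{JSJacylpro-p} for 2-acylindricity, Corollary \ref{SFSDblCoset} for condition (1), Proposition \ref{FuchsCp-D} together with the observation that conjugacy into a peripheral subgroup can be detected in the Fuchsian quotient for condition (2), and Lemma \ref{bdyintersectpro-p} for condition (3). The paper states these reductions more tersely (in particular it does not spell out the rewriting $G_e g G_f=(G_e\cdot gG_fg^{-1})g$ or the $Z_v$-argument for condition (2)), but the substance is identical.
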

\begin{proof}
The vertex groups are conjugacy $p$-separable by the previous result. By Proposition \ref{JSJacylpro-p}, the action on the standard pro-$p$ tree of this splitting is 2-acylindrical. Condition 1 of Theorem \ref{CpScombination} holds by Corollary \ref{SFSDblCoset}. Condition 2 holds by Proposition \ref{FuchsCp-D} since an element of a vertex group is conjugate into the boundary if and only if its image in the Fuchsian quotient is conjugate into the boundary. Condition 3 holds by Lemma \ref{bdyintersectpro-p}. Hence Theorem \ref{CpScombination} applies and $G$ is conjugacy $p$-separable.
\end{proof}
Since by Section 5.1 of \cite{AF13}, any graph manifold has a finite-sheeted cover of the above type, Theorem \ref{introCpS} follows immediately.

\nocite{BCR14}
\nocite{RZ96}
\bibliographystyle{plain}
\bibliography{VPP}
\end{document}